\numberwithin{equation}{section}
\newtheorem{theorem}{Theorem}[section]
\newtheorem{lemma}[theorem]{Lemma}
\newtheorem{definition}[theorem]{Definition}
\newtheorem{proposition}[theorem]{Proposition}
\newtheorem{remark}[theorem]{Remark}
\newtheorem{corollary}[theorem]{Corollary}
\begin{document}
	
\title[\hfil On weak and viscosity solutions of nonlocal double phase equations] {On weak and viscosity solutions of nonlocal double phase equations}

\author[Y. Fang and C. Zhang  \hfil \hfilneg]{Yuzhou Fang and  Chao Zhang$^*$}

\thanks{$^*$ Corresponding author.}

\address{Yuzhou Fang \hfill\break School of Mathematics, Harbin Institute of Technology, Harbin 150001, China}
\email{18b912036@hit.edu.cn}

\address{Chao Zhang  \hfill\break School of Mathematics and Institute for Advanced Study in Mathematics, Harbin Institute of Technology, Harbin 150001, China}
\email{czhangmath@hit.edu.cn}

\subjclass[2010]{Primary: 35D30, 35D40; Secondary: 35B45, 35R05, 47G20.}
\keywords{Regularity; nonlocal double phase equations; weak solutions; viscosity solutions}

\maketitle

\begin{abstract}
We consider the nonlocal double phase equation
\begin{align*}
\mathrm{P.V.} &\int_{\mathbb{R}^n}|u(x)-u(y)|^{p-2}(u(x)-u(y))K_{sp}(x,y)\,dy\\
&+\mathrm{P.V.} \int_{\mathbb{R}^n} a(x,y)|u(x)-u(y)|^{q-2}(u(x)-u(y))K_{tq}(x,y)\,dy=0,
\end{align*}
where $1<p\leq q$ and the modulating coefficient $a(\cdot,\cdot)\geq0$. Under some suitable hypotheses, we first use the De Giorgi-Nash-Moser methods to derive the local H\"{o}lder continuity for bounded weak solutions, and then establish the relationship between weak solutions and viscosity solutions to such equations.
\end{abstract}

\section{Introduction}
\label{sec-1}

In this paper, we are concerned with the following nonlocal double phase problem
\begin{equation}
\label{main}
\mathcal{L}u=0  \quad\text{in } \Omega,
\end{equation}
where $\Omega$ is a bounded domain in $\mathbb{R}^n$ and the integro-differential operator $\mathcal{L}$ is defined as
\begin{align*}
\mathcal{L}u(x)=\mathrm{P.V.} &\int_{\mathbb{R}^n} |u(x)-u(y)|^{p-2}(u(x)-u(y))K_{sp}(x,y)\,dy\\
&+\mathrm{P.V.} \int_{\mathbb{R}^n} a(x,y)|u(x)-u(y)|^{q-2}(u(x)-u(y))K_{tq}(x,y)\,dy
\end{align*}
with $1<p\leq q$  and $a(\cdot,\cdot)\geq0$. Eq. \eqref{main} is a class of possibly degenerate and singular integro-differential equations switching between two diverse fractional elliptic phases according to the zero set of the modulating coefficient $a=a(\cdot,\cdot)$. Here the kernels $K_{sp},K_{tq}: \mathbb{R}^n\times\mathbb{R}^n\rightarrow(0,\infty)$ are symmetric measurable functions with differentiability orders $s,t\in(0,1)$ and summability exponents $p,q\in(1,\infty)$, respectively. The symbol P.V. means ``in the principal value sense".

Eq. \eqref{main} can be regarded naturally as the nonlocal counterpart to the classical double phase problem, whose representative model is closely connected with the following functional
\begin{equation}
\label{1-2}
u\mapsto \int(|Du|^p+a(x)|Du|^q)\,dx,  \quad 1<p\leq q, \quad a(x)\geq 0.
\end{equation}
This kind of functionals, firstly introduced by Zhikov \cite{Zhi93,Zhi95} in the setting of homogenization and Lavrentiev phenomenon, could provide useful models to formulate the behaviour of strongly anisotropic materials whose hardening properties change drastically with the point. The functionals possessing the non-uniform growth conditions,
$$
u\mapsto\int_\Omega F(x,u,D u)\,dx, \quad \nu|\xi|^p\leq F(x,u,\xi)\leq L(|\xi|^q+1),
$$
have been a surge of interest over the last decades. For the autonomous case that energy density $F(x,u,D u)\equiv F(D u)$, the regularity theory is well-known by the seminal papers of Marcellini \cite{Mar89,Mar91,Mar96}. Recently, the regularity issues for the double phase functionals have been explored in a series of papers by Colombo, Mingione et al. We refer the readers to \cite{CM15,CM215,BCM18}  for the $C^{1,\alpha}$ theory,
 \cite{BO17,CM16,DeFM} for the Calder\'{o}n-Zygmund estimates, \cite{CDeF20} for the obstacle problem, \cite{CZ20} for the potential theory, \cite{FZ20} for the equivalence between distributional and viscosity solutions and \cite{BBO21,FVZZ20,DeFO19} for the multi-phase problems. For more results, one can see for instance \cite{DeFM202,DeFM21,BBO20} and references therein.

For what concerns the nonlocal version of double phase problem, when $a(\cdot,\cdot)\equiv0$ the problem \eqref{main} is reduced to the celebrated fractional $p$-Laplace equation:
\begin{equation}
\label{1-3}
\mathrm{P.V.} \int_{\mathbb{R}^n} |u(x)-u(y)|^{p-2}(u(x)-u(y))K_{sp}(x,y)\,dy=0.
\end{equation}
This type of equations was initially considered by Ishii and Nakamura \cite{IN10}, in which they investigated the existence, uniqueness and convergence of viscosity solutions. When it comes to regularity theory for Eq. \eqref{1-3}, Di Castro, Kussi and Palatucci \cite{DKP16} showed the local boundedness and H\"{o}lder continuity for the weak solutions to \eqref{1-3}, in the spirit of De Giorgi-Nash-Moser theory; see also \cite{DKP14} for the nonlocal Harnack type inequalities. Subsequently, the H\"{o}lder regularity up to the boundary was established in \cite{IMS16}. Additionally, many other aspects of fractional $p$-Laplace type equations have already been studied: higher regularity \cite{BL17,BLS18,KMS15}, H\"{o}lder continuity of viscosity solutions \cite{Lin16}, fractional $p$-eigenvalue problems \cite{BP15,FP14} as well as the maximal principles and symmetry of solutions \cite{CL18}. More results can be found in \cite{ILPS15,KKP16,KMS152,Pal18,Sil06} and references therein.

The nonlocal double phase equation \eqref{main}  was introduced by De Filippis and Palatucci \cite{DeFP19}. For the inhomogeneous analogue
\begin{equation}
\label{1-4}
\mathcal{L}u=f,
\end{equation}
the authors in \cite{DeFP19} proved that any bounded viscosity solution is locally H\"{o}lder continuous under some reasonable hypotheses. This is the first regularity result for nonlocal double phase problems. Moreover, the self-improving properties for Eq. \eqref{1-4} were established by Scott and Mengesha in \cite{SM20}. To our knowledge, there are few results on the nonlocal double phase problems except the aforementioned two papers. To this end, our interest in the present article focuses on the H\"{o}lder regularity for weak solutions and the relationship between weak and viscosity solutions to \eqref{main}.

 Now we state the first result of this work as follows.

\begin{theorem}
\label{thm0-1}
Let $u$ be a bounded weak solution to \eqref{main} in $\Omega$. Under the assumptions {\rm ($A_1$), ($A_2$), ($H_1$), ($H_2$)} and \eqref{2-1} (in Section \ref{sec-2}), we infer that $u$ is locally H\"{o}lder continuous in  $\Omega$. Specifically, there exist two constants $\alpha\in\left(0,\frac{tq}{q-1}\right)$ and $C>0$, both of which depend on $n,p,q,s,t,\Lambda_1,\Lambda_2$ and $M$, such that
$$
\mathrm{osc}_{B_\rho(x_0)}u\leq C\left(\frac{\rho}{r}\right)^\alpha\|u\|_{L^\infty(\mathbb{R}^n)},
$$
where $\rho\in(0,r]$ and $B_{2r}(x_0)\subset\Omega$.
\end{theorem}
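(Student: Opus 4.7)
The plan is to follow the De Giorgi--Nash--Moser scheme that Di Castro--Kuusi--Palatucci developed for the fractional $p$-Laplacian, suitably adapted to accommodate the additional $(q,tq,a)$-phase. The roadmap consists of three ingredients: a Caccioppoli-type energy estimate, a logarithmic estimate, and an oscillation-reduction iteration. The critical H\"older exponent $\alpha < tq/(q-1)$ indicates that the worst contribution comes from the $(q,tq)$ nonlocal tail, so the analysis must be tuned to the $q$-phase even though the equation is genuinely of double-phase type.

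First I would derive a Caccioppoli inequality for $w_\pm := (u-k)_\pm$ with arbitrary level $k$. Testing the weak formulation against $\eta^q w_\pm$ with a suitable cut-off $\eta$ supported in a ball $B_r \Subset \Omega$, and using the elementary algebraic inequalities
\[
|a-b|^{p-2}(a-b)(A_+-B_+)\,\ge\, c\,|A_+-B_+|^p - C\,\max\{A_+,B_+\}^p|\eta_A-\eta_B|^p
\]
(and the analogous $q$-version), one obtains a bound of the form
\[
\int\!\!\int \frac{|w_\pm(x)\eta(x)-w_\pm(y)\eta(y)|^p}{|x-y|^{n+sp}}\,dx\,dy
+ \int\!\!\int a(x,y)\frac{|w_\pm(x)\eta(x)-w_\pm(y)\eta(y)|^q}{|x-y|^{n+tq}}\,dx\,dy
\]
controlled by a local $L^p+L^q$-average of $w_\pm$ multiplied by the analogous oscillation of $\eta$, plus the nonlocal tail contributions $\mathrm{Tail}_{p,sp}(w_\pm;x_0,r)$ and $\mathrm{Tail}_{q,tq}(w_\pm;x_0,r)$. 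This step is essentially a combination of the two single-phase Caccioppoli estimates, once the kernel bounds (A$_1$), (A$_2$) and the structural assumptions (H$_1$), (H$_2$) are in place.

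Next I would prove a logarithmic lemma: testing with $\eta^q(w_\pm + d)^{1-p}$ (and separately the $q$-analogue) and integrating by parts in the usual algebraic way yields a BMO-type control
\[
\int\!\!\int_{B_r\times B_r} \bigl|\log(u(x)+d) - \log(u(y)+d)\bigr|^p\,\frac{dx\,dy}{|x-y|^{n+sp}} \le C r^{n-sp},
\]
assuming $u\ge 0$ on $B_{2r}$ and with $d$ measuring the size of the tail. The $q$-piece is handled analogously and, crucially, is absorbed thanks to the weight $a$ on the left-hand side. From the Caccioppoli inequality, I would then run a De Giorgi iteration to deduce the standard level-set decay: if $u\ge 0$ in $B_r$ and $|\{u\le\delta\}\cap B_r| \le \gamma |B_r|$ with $\gamma$ small, then $u \ge \delta/2$ in $B_{r/2}$, up to a tail correction.

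The final step is the oscillation-reduction dichotomy. Set $\omega(r) = \mathrm{osc}_{B_r}u$, and for a nested sequence $B_{r_j}$ with $r_{j+1} = r_j/4$, consider $M_j = \sup_{B_{r_j}}u$, $m_j = \inf_{B_{r_j}}u$. In either the ``+'' or ``$-$'' alternative one of $M_j - u$, $u - m_j$ has measure at least $\tfrac12|B_{r_{j+1}}|$ on the bigger ball; the logarithmic estimate then propagates this fact to a definite lower bound on a smaller ball, which De Giorgi iteration upgrades to a pointwise improvement $\omega(r_{j+1}) \le \sigma\,\omega(r_j) + C r_j^\alpha\|u\|_{L^\infty}$ for some $\sigma<1$. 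Iterating this recurrence against a geometric sequence yields the quantitative H\"older estimate with $\alpha$ strictly below the critical threshold $tq/(q-1)$ (the constraint coming from the $q$-tail's summability).

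The main obstacle, as in \cite{DeFP19}, will be making the dichotomy truly dimensionless across the two phases: the Caccioppoli and log estimates mix $p$- and $q$-terms, but the tail decay is forced by the more singular $q$-tail. I would handle this by running the iteration with levels $\delta_j$ tuned so that the tail contributions from both phases remain absorbable into the main term at every scale, following the careful tail-bookkeeping already used for the viscosity version in \cite{DeFP19}, and adapting the $L^p$-$L^q$ Sobolev embedding argument for functions with vanishing trace in $B_r$. Modulo this bookkeeping, the argument proceeds as in the fractional $p$-Laplacian case.
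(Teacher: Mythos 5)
Your roadmap is essentially the paper's proof: a Caccioppoli inequality from testing with $(u-k)_\pm\phi^q$, a logarithmic lemma, and an induction/oscillation-reduction with the level parameter $d$ tuned to $\sigma^{tq/(q-1)-\alpha}\omega(r_j)$, which is exactly where the restriction $\alpha<tq/(q-1)$ arises. The one correction: there is no ``separate $q$-analogue'' test in the logarithmic lemma --- the single test function $(u+d)^{1-p}\phi^q$ must be fed into the full two-phase weak formulation, and the $q$-growth term then yields the log estimate only with the weight $d^{q-p}a(x,y)$ on the left and an extra $M(\|u\|_{L^\infty}+d)^{q-p}r^{n-tq}$ on the right (harmless since $tq\le sp$), which is the ``appropriate test function'' subtlety the paper emphasizes.
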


On the other hand, influenced by the papers \cite{DeFP19,KKL19,FZ20}, we try to consider the linkage between weak and viscosity solutions to \eqref{main}, which is the second result of our paper.

\begin{theorem}
\label{thm0-2}
Let the assumptions {\rm ($A_1$)--($A_4$), ($H_1'$), ($H_2$)--($H_4$)} and \eqref{2-1} (in Section \ref{sec-2}) be in force. Then the bounded weak solutions to \eqref{main}  are the viscosity solutions.
\end{theorem}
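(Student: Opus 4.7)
The proof strategy is by contradiction and parallels the template developed by Korvenp\"a\"a--Kuusi--Lindgren \cite{KKL19} for the fractional $p$-Laplacian and refined in \cite{DeFP19,FZ20} for double phase problems. I would show separately that $u$ is a viscosity subsolution and a viscosity supersolution; the two cases are symmetric, so only the supersolution case is described. Suppose for contradiction that there exist $x_0 \in \Omega$, $r_0>0$ and $\varphi \in C^2(\overline{B_{r_0}(x_0)})$ with $\varphi(x_0)=u(x_0)$, $\varphi \leq u$ in $B_{r_0}(x_0)$, yet $\mathcal{L}\tilde{\varphi}(x_0)<0$, where $\tilde{\varphi}:=\varphi\,\chi_{B_{r_0}(x_0)}+u\,\chi_{\mathbb{R}^n\setminus B_{r_0}(x_0)}$. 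After subtracting a small bump of the form $c|x-x_0|^4$ from $\varphi$, I may assume $\varphi<u$ strictly on $B_{r_0}(x_0)\setminus\{x_0\}$. Using the $C^2$-regularity of $\varphi$ to handle the singular part at $x_0$, together with the H\"older continuity of $u$ from Theorem \ref{thm0-1}, the tail decay \eqref{2-1}, the regularity of the modulating coefficient supplied by ($A_3$)--($A_4$), and the structure of the kernels from ($H_1'$), ($H_3$)--($H_4$), I would verify that $x\mapsto\mathcal{L}\tilde{\varphi}(x)$ is well defined and continuous at $x_0$. Consequently there exist $r\in(0,r_0/2)$ and $\theta>0$ with $\mathcal{L}\tilde{\varphi}(x)\leq -\theta$ for all $x\in B_r(x_0)$.

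The next step is to lift the test function and derive a contradiction from the weak formulation. For small $\delta>0$, set $\varphi_\delta:=\varphi+\delta$, $\Omega_\delta:=\{\varphi_\delta>u\}$, and let $\tilde{\varphi}_\delta$ be the corresponding concatenation. Strict touching ensures $\Omega_\delta\Subset B_r(x_0)$ for $\delta$ sufficiently small, so $\eta:=(\tilde{\varphi}_\delta-u)_+$ is nonnegative, compactly supported in $B_r(x_0)$, and admissible in the weak formulation. Multiplying the pointwise bound by $\eta$, integrating, and using Fubini together with the symmetry of the kernels yields
\begin{align*}
-\theta\int\eta\,dx+o_\delta(1)\;\geq\;\frac{1}{2}\iint\bigl[(\mathcal{J}_p(\tilde{\varphi}_\delta)-\mathcal{J}_p(u))+a(\mathcal{J}_q(\tilde{\varphi}_\delta)-\mathcal{J}_q(u))\bigr]\bigl(\eta(x)-\eta(y)\bigr)\,dx\,dy,
\end{align*}
where $\mathcal{J}_p$ and $\mathcal{J}_q$ abbreviate $|w(x)-w(y)|^{p-2}(w(x)-w(y))K_{sp}(x,y)$ and its $q,tq$-analogue; here the weak formulation of $u$ against $\eta$, which equals zero, has been subtracted. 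Splitting $\mathbb{R}^n\times\mathbb{R}^n$ according to whether each coordinate lies in $\Omega_\delta$, $B_r(x_0)\setminus\Omega_\delta$, or $\mathbb{R}^n\setminus B_r(x_0)$, the monotonicity inequality $(|a|^{p-2}a-|b|^{p-2}b)(a-b)\geq 0$ combined with $\tilde{\varphi}_\delta-u=(w)_+$ and $w\equiv 0$ outside $B_r(x_0)$ shows that the integrand is pointwise nonnegative in every subregion, contradicting the strict negativity of the left-hand side once $\delta$ is chosen small enough. The subsolution case follows by the symmetric perturbation $\varphi\to\varphi-\delta$.

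The main obstacle is the continuity and well-posedness of $\mathcal{L}\tilde{\varphi}$ at $x_0$ and its uniform control in a neighborhood: the $q$-growth term is significantly more delicate than the $p$-term because the interaction of the modulating coefficient $a(x,y)$ with $K_{tq}$ demands careful use of ($A_3$)--($A_4$) and ($H_3$)--($H_4$) to pass to the limit $\delta\to 0^+$ and to ensure the tails are dominated. A further subtle point is pointwise nonnegativity of the integrand in mixed regions such as $\Omega_\delta\times(B_r\setminus\Omega_\delta)$, where one exploits that $\tilde{\varphi}_\delta(x)-\tilde{\varphi}_\delta(y)\geq u(x)-u(y)$ whenever $x\in\Omega_\delta$ and $y\notin\Omega_\delta$, together with $\eta(y)=0$; this step requires the H\"older regularity from Theorem \ref{thm0-1} to supply the integrability estimates that justify the splitting. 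Finally, the global boundedness of $u$ together with \eqref{2-1} guarantees convergence of the far-field contributions as $\delta\to 0^+$.
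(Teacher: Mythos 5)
Your overall strategy is sound and, for the endgame, genuinely different from the paper's. The paper proceeds modularly: after the same contradiction set-up and the same continuity statement for $\mathcal{L}\psi_r$ (its Lemmas \ref{lem4-3}--\ref{lem4-5}), it perturbs the test function upward by $\theta\eta$ with a carefully constructed bump (Lemma \ref{lem4-6}), proves that a pointwise subsolution of class $C^2$ (resp.\ $C^2_\beta$) is a weak subsolution (Lemma \ref{lem4-7}), and then invokes the comparison principle (Proposition \ref{pro4-1}) to contradict $\psi(x_0)=u(x_0)$. You instead inline the comparison argument: lift by a constant $\delta$ after forcing strict touching, test the difference of the two formulations with $(\tilde\varphi_\delta-u)_+$, and use the monotonicity of $t\mapsto|t|^{\kappa-2}t$. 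Your pointwise nonnegativity claim in the mixed regions is correct (when $w(x)\ge 0>w(y)$ one has $\eta(x)-\eta(y)=w(x)\ge0$ and $w(x)-w(y)>0$, so both factors are nonnegative), and this route buys you independence from a separately stated comparison principle. Note, however, that the step ``multiply the pointwise bound by $\eta$, integrate, and symmetrize via Fubini'' is not free: converting $\int\mathcal{L}\tilde\varphi_\delta\,\eta\,dx$ into the symmetric bilinear form requires exactly the dominated-convergence and tail estimates of the paper's Lemma \ref{lem4-7}, and the $o_\delta(1)$ term hides a stability estimate of the type of Lemma \ref{lem4-6} (since $\tilde\varphi_\delta$ is shifted only inside the ball, $\mathcal{L}\tilde\varphi_\delta\neq\mathcal{L}\tilde\varphi$).

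The genuine gap is your treatment of the degenerate case in Definition \ref{def1}. When $1<p\le\frac{2}{2-s}$ and $D\varphi(x_0)=0$, the principal value $\mathcal{L}\tilde\varphi(x_0)$ is well defined and continuous only under the extra hypothesis $\varphi\in C^2_\beta(B_r(x_0))$ with $\beta>\frac{sp}{p-1}$ and $x_0$ an isolated critical point; this dichotomy (cases (a)/(b)) does not appear in your argument. In particular, your ad hoc modification $\varphi\mapsto\varphi-c|x-x_0|^4$ must be checked to preserve membership in $C^2_\beta$ near $x_0$ (the Hessian of the bump is of order $|x-x_0|^2$, which need not be dominated by $d^{\beta-2}$ for the admissible range of $\beta$, and the critical set may change); the paper avoids this by constructing the perturbation $\eta$ with the explicit constraint $|D^2\eta|\le Md_\eta^{\beta-2}$ and verifying $d_{\psi_\theta}=d_\psi$. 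Since you yourself identify the well-posedness and continuity of $\mathcal{L}\tilde\varphi$ as the main obstacle, you should be aware that this is where essentially all of the paper's work lies (Lemmas \ref{lem4-2}--\ref{lem4-6}, including the cancellation of the affine part against the symmetric kernels and the weighted tail space $L^{q-1}_{a,tq}$), and your proposal currently defers it rather than resolves it. A minor point: the H\"older continuity of $u$ from Theorem \ref{thm0-1} is needed to make $u$ (semi)continuous so that the viscosity framework applies, but the integrability needed for the splitting already follows from $u\in W^{s,p}(\mathbb{R}^n)\cap L^\infty(\mathbb{R}^n)$ and the tail conditions, not from H\"older regularity.
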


We would like to remark that the proof of Theorem \ref{thm0-1}  is inspired by the ideas developed in \cite{DKP16}. However, compared with the usual fractional $p$-Laplace equation, Eq. \eqref{main} exhibits the differences not only from the nonlocal feature of the involved integro-differential operators, but also from the non-standard growth behaviour and the presence of modulating coefficient $a(\cdot,\cdot)$. This makes the current investigation is more challenging. We have to pick an appropriate test function in order to establish the Logarithmic type lemma (Lemma \ref{lem3-2}), which plays a key role in the proof of H\"{o}lder regularity. We also need to take into account the barrier created by coefficient $a(\cdot,\cdot)$ in a suitable way to obtain the oscillation reduction. It is worth mentioning that the bound \eqref{2-1} used here is the same as that of \cite{DeFP19} for the H\"{o}lder regularity of viscosity solutions in homogeneous case. In addition, although the different notions of solutions to fractional $p$-Laplacian \eqref{1-3} have been investigated by Korvenp\"{a}\"{a}  et al. in \cite{KKL19,KKP17}; see also \cite{BM20} for the non-homogeneous version and \cite{FZ20} for the double phase case.  However, whether or not the different solutions  to \eqref{main}  coincide was still unknown. In this paper we  partially answer this question and establish that the weak solutions to \eqref{main} are  viscosity solutions  (Theorem \ref{thm0-2}). Unfortunately,  for the reverse implication, there exists a very tricky problem hindering in the proof. We shall continue this issue in a forthcoming paper.

This paper is organized as follows. In Section \ref{sec-2}, we give some basic notations and auxiliary tools to be used later as well as the definitions of solutions. Section \ref{sec-3} is devoted to establishing the H\"{o}lder estimates for bounded weak solutions. At last, we shall prove that bounded weak solutions are viscosity solutions in Section \ref{sec-4}.

\section{Preliminaries}
\label{sec-2}

In this section, we shall state some assumptions on the problem \eqref{main}, and give some basic notions and notations.

In the sequel, we denote by $C$ a generic positive constant which may vary from line to line. Relevant dependencies on parameters shall be emphasised utilizing parentheses, i.e., $C\equiv C(n,p,q)$ means that $C$ depends on $n,p,q$. Let
$$
B_r(x_0):=\{x\in\mathbb{R}^n:|x-x_0|<r\}
$$
denote the open ball with center $x_0$ and radius $r>0$. If not important, or clear from the context, we will not denote the center as follows: $B_r:=B_r(x_0)$. Moreover, let $\gamma B_r:=B(x_0,\gamma r)$. If $g\in L^1(A)$ and $A\subset\mathbb{R}^n$ is a measurable subset with positive measure $0<|A|<\infty$, we denote by
$$
(g)_A:=\fint_Ag(x)\,dx=\frac{1}{|A|}\int_Ag(x)\,dx
$$
its integral average.

Let the kernels $K_{sp}(\cdot,\cdot),K_{tq}(\cdot,\cdot): \mathbb{R}^n\times\mathbb{R}^n\rightarrow(0,\infty)$ be two measurable functions satisfying the following conditions:

\smallskip

\begin{itemize}

\item[($A_1$)] Symmetry: $K_{sp}(x,y)=K_{sp}(y,x), K_{tq}(x,y)=K_{tq}(y,x)$ for all $x,y\in\mathbb{R}^n$.

\smallskip

\item[($A_2$)] Growth condition: $\Lambda_1^{-1}\leq K_{sp}(x,y)|x-y|^{n+sp}\leq \Lambda_1$, $\Lambda_2^{-1}\leq K_{tq}(x,y)|x-y|^{n+tq}\leq \Lambda_2$ for all $x,y\in\mathbb{R}^n, x\neq y$, where $\Lambda_1,\Lambda_2\geq1$.

    \smallskip

\item[($A_3$)] Translation invariance: $K_{sp}(x+z,y+z)=K_{sp}(x,y)$, $K_{tq}(x+z,y+z)=K_{tq}(x,y)$ for all $x,y,z\in\mathbb{R}^n$, $x\neq y$.

    \smallskip

\item[($A_4$)] Continuity: the map $x\mapsto K_{sp}(x,y)$ is continuous in $\mathbb{R}^n\setminus\{y\}$ and $K_{tq}(x,y)$ has the same continuity property.

\end{itemize}
We then impose four conditions on the coefficient $a(\cdot,\cdot)$:

\smallskip

\begin{itemize}

\item[($H_1$)] Boundedness: $0\leq a(x,y)\leq M$ for $x,y\in\mathbb{R}^n$.

\smallskip

\item[($H_2$)] Symmetry: $a(x,y)=a(y,x)$ for all $x,y\in\mathbb{R}^n$.

\smallskip

\item[($H_3$)] Translation invariance: $a(x+z,y+z)=a(x,y)$ for all $x,y,z\in\mathbb{R}^n$.

\smallskip

\item[($H_4$)] Continuity: the map $x\mapsto a(x,y)$ is continuous in $\mathbb{R}^n$.
\end{itemize}
Throughout this paper we also assume that
\begin{equation}
\label{2-1}
1<p\leq q<\infty, \quad tq\leq sp.
\end{equation}

Let us point out that it suffices to require that the conditions \eqref{2-1}, ($A_1$), ($A_2$), ($H_1$) and ($H_2$) hold, when we prove the H\"{o}lder continuity of weak solutions. However, when we verify that weak solutions are viscosity solutions, we need the assumptions \eqref{2-1}, ($A_1$)--($A_4$) and ($H_2$)--($H_4$), together with the following stronger assumption that
\begin{itemize}
	\item [($H_1'$)] Positive boundedness:  $0<a(x,y)\leq M$ for  $x,y\in\mathbb{R}^n$.
\end{itemize}

The fractional Sobolev space $W^{s,p}(\mathbb{R}^n)$ with $s\in(0,1), p\in[1,\infty)$  is defined as
\begin{equation*}
W^{s,p}(\mathbb{R}^n)=\left\{u\in L^p(\mathbb{R}^n): [u]_{W^{s,p}(\mathbb{R}^n)}:=\int_{\mathbb{R}^n}\int_{\mathbb{R}^n}\frac{|u(x)-u(y)|^p}{|x-y|^{n+sp}}\,dx\,dy<\infty\right\},
\end{equation*}
endowed with the norm
$$
\|u\|_{W^{s,p}(\mathbb{R}^n)}:=\|u\|_{L^p(\mathbb{R}^n)}+[u]_{W^{s,p}(\mathbb{R}^n)}.
$$
In a similar way, it is possible to define the space $W^{s,p}(\Omega)$ in a region $\Omega\subset\mathbb{R}^n$.

The ``tail space" is given by
$$
L^{p-1}_{sp}(\mathbb{R}^n)=\left\{u\in L^{p-1}_{\rm loc}(\mathbb{R}^n): \int_{\mathbb{R}^n}\frac{|u(y)|^{p-1}}{(1+|y|)^{n+sp}}\,dy<\infty\right\}.
$$
The corresponding nonlocal tail of a function $u$ is defined as
$$
\mathrm{Tail}(u;z,r)=\left(r^{sp}\int_{\mathbb{R}^n\setminus B_r(z)}\frac{|u(y)|^{p-1}}{|y-z|^{n+sp}}\,dy\right)^\frac{1}{p-1},
$$
which is introduced in \cite{DKP16}. Analogously, taking into account the integro-differential operator whose kernel $K_{tq}(\cdot,\cdot)$ is perturbed by coefficient $a$, here we introduce a ``tail space with weight" and the corresponding nonlocal tail with weight denoted by
$$
L^{q-1}_{a,tq}(\mathbb{R}^n)=\left\{u\in L^{q-1}_{\rm loc}(\mathbb{R}^n): \sup_{x\in\mathbb{R}^n}\int_{\mathbb{R}^n}a(x,y)\frac{|u(y)|^{q-1}}{(1+|y|)^{n+tq}}\,dy<\infty\right\}
$$
and
$$
\mathrm{Tail}_{a}(u;z,r)=\left(r^{tq}\sup_{x\in\mathbb{R}^n}\int_{\mathbb{R}^n\setminus B_r(z)}a(x,y)\frac{|u(y)|^{q-1}}{|y-z|^{n+tq}}\,dy\right)^\frac{1}{q-1}.
$$
We can readily verify that $\mathrm{Tail}_a(u;z,r)$ is finite for every $z\in\mathbb{R}^n$ and $r\in(0,+\infty)$, provided that $u\in L^{q-1}_{a,tq}(\mathbb{R}^n)$.

Next let us recall the definition of weak solutions to \eqref{main}.

\begin{definition}
A function $u\in W^{s,p}(\mathbb{R}^n) \cap L^{p-1}_{sp}(\mathbb{R}^n)\cap L^{q-1}_{a,tq}(\mathbb{R}^n)$ is called a weak solution to Eq. \eqref{main}, if 
\begin{equation}
\label{2-2}
\int_{\mathbb{R}^n}\int_{\mathbb{R}^n}\left(|u(x)-u(y)|^pK_{sp}(x,y)+a(x,y)|u(x)-u(y)|^qK_{tq}(x,y)\right)\,dx\,dy<\infty
\end{equation}
and moreover
\begin{equation}
\label{2-3}
\begin{split}
\int_{\mathbb{R}^n}\int_{\mathbb{R}^n}\Big[&|u(x)-u(y)|^{p-2}(u(x)-u(y))(\phi(x)-\phi(y))K_{sp}(x,y)\\
&+a(x,y)|u(x)-u(y)|^{q-2}(u(x)-u(y))(\phi(x)-\phi(y))K_{tq}(x,y)\Big]\,dx\,dy=0,
\end{split}
\end{equation}
for all $\phi\in C_0^\infty(\mathbb{R}^n)$. A function $u\in W^{s,p}(\mathbb{R}^n)\cap L^{p-1}_{sp}(\mathbb{R}^n)\cap L^{q-1}_{a,tq}(\mathbb{R}^n)$ is called a weak supersolution to Eq. \eqref{main}, if the inequality \eqref{2-2} holds true and moreover
\begin{equation}
\label{2-4}
\begin{split}
\int_{\mathbb{R}^n}\int_{\mathbb{R}^n}\Big[&|u(x)-u(y)|^{p-2}(u(x)-u(y))(\phi(x)-\phi(y))K_{sp}(x,y)\\
&+a(x,y)|u(x)-u(y)|^{q-2}(u(x)-u(y))(\phi(x)-\phi(y))K_{tq}(x,y)\Big]\,dx\,dy\geq0,
\end{split}
\end{equation}
for all nonnegative $\phi\in C_0^\infty(\mathbb{R}^n)$. The inequality \eqref{2-4} is reverse for subsolution.
\end{definition}

\begin{remark}
\label{rem2-1}
From the Theorem 2.3 in \cite{SM20}, we find that
the admissible test functions $\phi\in C_0^\infty(\mathbb{R}^n)$ in the previous definition can be replaced by $\phi\in W^{s,p}(\mathbb{R}^n)\cap L^\infty(\mathbb{R}^n)$ with compact support satisfying the inequality \eqref{2-2}.
\end{remark}

We denote the set of critical points of function $u$ by
$$
N_u=\{x\in\Omega: Du(x)=0\}.
$$
Let $d_u(x):=\mathrm{dist}(x,N_u)$ stand for the distance from point $x$ to set $N_u$. Given an open set $E\subset \Omega$, we define
$$
C^2_\beta(E):=\left\{u\in C^2(E):\sup_{x\in E}\left(\frac{\min\{d_u(x),1\}^{\beta-1}}{|Du(x)|}+\frac{|D^2u(x)|}{d_u(x)^{\beta-2}}\right)<\infty\right\}.
$$
In the spirit of \cite{KKL19}, we now give the notion of viscosity solutions to \eqref{main}.

\begin{definition}
\label{def1}
A function $u:\mathbb{R}^n\rightarrow[-\infty,+\infty]$ is called a viscosity supersolution to Eq. \eqref{main}, if it satisfies the following four properties:
\begin{itemize}
\item[(i)] $u$ is lower semicontinuous in $\Omega$.

\smallskip

\item[(ii)] $u<+\infty$ almost everywhere in $\mathbb{R}^n$, and $u>-\infty$ everywhere in $\Omega$.

\smallskip

\item[(iii)] If $\psi\in C^2(B_r(x_0))$ for some $B_r(x_0)\subset\Omega$ such that $\psi(x_0)=u(x_0)$ and $\psi(x)\leq u(x)$ in $B_r(x_0)$, and moreover one of the following holds
    \begin{itemize}

    \item[(a)] $p>\frac{2}{2-s}$ or $D\psi(x_0)\neq0$,

    \item[(b)] $1<p\leq\frac{2}{2-s}$, $D\psi(x_0)=0$ with $x_0$ being an isolated critical point of $\psi$, and $\psi\in C^2_\beta(B_r(x_0))$ for some $\beta>\frac{sp}{p-1}$,
    \end{itemize}
then one has
$$
\mathcal{L}\psi_r(x_0)\geq 0,
$$
where
\begin{equation*}
\psi_r(x)=\begin{cases}\psi(x),  & \text{\textmd{for }} x\in B_r(x_0),\\[2mm]
u(x), & \text{\textmd{for }} x\in \mathbb{R}^n\setminus B_r(x_0).
\end{cases}
\end{equation*}

\smallskip

\item[(iv)] $u_-\in L_{sp}^{p-1}(\mathbb{R}^n)\cap L_{a,tq}^{q-1}(\mathbb{R}^n)$.
\end{itemize}
If $-u$ is a viscosity supersolution, then we call $u$ a viscosity subsolution. A function $u$ is a viscosity solution if and only if it is viscosity super- and subsolution. Here $u_-:=\max\{-u,0\}$.
\end{definition}

For the sake of convenience, we provide two very important inequalities to be employed later.

\begin{proposition}[\cite{DKP16}]
\label{pro2-1}
Let $\kappa\geq1$ and $\varepsilon\in(0,1]$. Then, for all $a,b\in\mathbb{R}^n$ ($n\geq1$), it holds that
$$
|a|^\kappa\leq |b|^\kappa+c_\kappa\varepsilon|b|^\kappa+(1+c_\kappa\varepsilon)\varepsilon^{1-\kappa}|a-b|^\kappa,
$$
where $c_\kappa:=(\kappa-1)\Gamma(\max\{1,\kappa-2\})$ and $\Gamma$ denotes the standard Gamma function.
\end{proposition}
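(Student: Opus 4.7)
The plan is to reduce the vector inequality to a one-variable scalar statement by the triangle inequality, and then prove the scalar version by combining the fundamental theorem of calculus with a weighted Young inequality. Since $\kappa\ge 1$ makes $t\mapsto t^\kappa$ monotone on $[0,\infty)$, the bound $|a|\le |b|+|a-b|$ gives $|a|^\kappa\le (|b|+|a-b|)^\kappa$. Writing $\sigma:=|b|$ and $\tau:=|a-b|$, the task reduces to the scalar claim
\[
(\sigma+\tau)^\kappa \;\le\; \sigma^\kappa+c_\kappa\varepsilon\,\sigma^\kappa+(1+c_\kappa\varepsilon)\,\varepsilon^{1-\kappa}\,\tau^\kappa
\]
for all $\sigma,\tau\ge 0$ and $\varepsilon\in(0,1]$, with the case $\sigma=0$ immediate from $\varepsilon^{1-\kappa}\ge 1$.

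For $\sigma>0$, I would start from the identity
\[
(\sigma+\tau)^\kappa-\sigma^\kappa \;=\; \kappa\int_0^\tau (\sigma+s)^{\kappa-1}\,ds
\]
and estimate the integrand in two regimes. For $\kappa\in[1,2]$ the concavity of $t\mapsto t^{\kappa-1}$ yields the subadditivity $(\sigma+s)^{\kappa-1}\le \sigma^{\kappa-1}+s^{\kappa-1}$, while for $\kappa>2$ one has the convexity bound $(\sigma+s)^{\kappa-1}\le 2^{\kappa-2}(\sigma^{\kappa-1}+s^{\kappa-1})$. Integrating in either case leads to an estimate of the form
\[
(\sigma+\tau)^\kappa \;\le\; \sigma^\kappa+\kappa\, C_\kappa\,\sigma^{\kappa-1}\tau+C_\kappa\,\tau^\kappa,
\]
where $C_\kappa$ can be expressed uniformly across both ranges through $\Gamma(\max\{1,\kappa-2\})$; this is the source of the Gamma-function factor appearing in $c_\kappa$.

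Finally, I would split the cross term by the weighted Young inequality
\[
\sigma^{\kappa-1}\tau \;\le\; \frac{\kappa-1}{\kappa}\,\varepsilon\,\sigma^\kappa+\frac{1}{\kappa}\,\varepsilon^{1-\kappa}\,\tau^\kappa,
\]
which is precisely tuned so that the $\sigma^\kappa$ piece carries only a factor of $\varepsilon$ while the $\tau^\kappa$ piece carries $\varepsilon^{1-\kappa}$. Substituting this back and absorbing the pure $\tau^\kappa$ contribution into the $(1+c_\kappa\varepsilon)\varepsilon^{1-\kappa}\tau^\kappa$ term (which is legitimate because $\varepsilon\le 1$ implies $\varepsilon^{1-\kappa}\ge 1$) delivers the required form. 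The main technical obstacle is the bookkeeping in the subquadratic range $\kappa\in(1,2)$, where the second derivative of $t^\kappa$ blows up at the origin and a standard quadratic Taylor remainder is singular; there the argument has to rely on the concavity of $t^{\kappa-1}$ to keep the cross term controlled, and it is precisely this case that forces the Gamma-function prefactor in the final constant $c_\kappa$.
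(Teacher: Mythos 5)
The paper offers no proof of this proposition at all --- it is quoted directly from \cite{DKP16} --- so the only question is whether your argument closes, and it does not: the final absorption step fails. Track the constants in the subquadratic case $\kappa\in(1,2)$. Your chain gives
\[
(\sigma+\tau)^\kappa\le\sigma^\kappa+(\kappa-1)\varepsilon\,\sigma^\kappa+\varepsilon^{1-\kappa}\tau^\kappa+\tau^\kappa,
\]
and matching this against the target requires the leftover $\tau^\kappa$ to be dominated by the excess $c_\kappa\varepsilon\cdot\varepsilon^{1-\kappa}\tau^\kappa=(\kappa-1)\varepsilon^{2-\kappa}\tau^\kappa$, i.e.\ $(\kappa-1)\varepsilon^{2-\kappa}\ge1$; since $\kappa-1<1$ and $\varepsilon^{2-\kappa}\le1$, this is false for every $\varepsilon\in(0,1]$. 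Your parenthetical justification ($\varepsilon^{1-\kappa}\ge1$) only yields $\tau^\kappa\le\varepsilon^{1-\kappa}\tau^\kappa$, which makes the total coefficient $2\varepsilon^{1-\kappa}$, not $(1+c_\kappa\varepsilon)\varepsilon^{1-\kappa}$. The superquadratic case is worse: the factor $2^{\kappa-2}$ from the convexity bound multiplies the pure $\tau^\kappa$ contribution, so the $\tau$-coefficient becomes $2^{\kappa-2}\bigl(1+\varepsilon^{\kappa-1}\bigr)\varepsilon^{1-\kappa}$, which exceeds $(1+c_\kappa\varepsilon)\varepsilon^{1-\kappa}$ as $\varepsilon\to0$ because $2^{\kappa-2}>1$. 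The structural problem is that any additive splitting of $(\sigma+s)^{\kappa-1}$ followed by integration produces a pure $\tau^\kappa$ term with an $\varepsilon$-independent coefficient, while the target inequality allows no such slack.

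The route that does work is a single convexity step rather than FTC plus Young: writing $\sigma+\tau=\tfrac{1}{1+\varepsilon}(1+\varepsilon)\sigma+\tfrac{\varepsilon}{1+\varepsilon}\cdot\tfrac{1+\varepsilon}{\varepsilon}\tau$ and using convexity of $t\mapsto t^\kappa$ gives
\[
(\sigma+\tau)^\kappa\le(1+\varepsilon)^{\kappa-1}\left(\sigma^\kappa+\varepsilon^{1-\kappa}\tau^\kappa\right),
\]
after which one only needs $(1+\varepsilon)^{\kappa-1}\le1+c_\kappa\varepsilon$ on $(0,1]$. For $1\le\kappa\le2$ this is Bernoulli's inequality with $c_\kappa=\kappa-1$, consistent with the displayed constant. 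For $\kappa>2$ it forces $c_\kappa\ge2^{\kappa-1}-1$ (take $\varepsilon=1$), and you should note that the constant as quoted does not always meet this threshold: for $\kappa=3$ one has $c_3=2\,\Gamma(1)=2<3$, and indeed $a=2$, $b=1$, $\varepsilon=1$ gives $8\le1+2+3=6$, which is false. So for $\kappa$ moderately above $2$ the displayed $c_\kappa$ must be enlarged (any $c_\kappa\ge2^{\kappa-1}-1$ suffices); this does not affect the way the proposition is used in Section \ref{sec-3}, where only the qualitative form of the inequality matters.
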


\begin{proposition}[\cite{KKL19}]
\label{pro2-2}
Let $\kappa>1$ and $a,b\in\mathbb{R}$. Then
$$
\left||a|^{\kappa-2}a-|b|^{\kappa-2}b\right|\leq C(|b|+|a-b|)^{\kappa-2}|a-b|,
$$
where $C$ depends only on $\kappa$.
\end{proposition}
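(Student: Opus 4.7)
The plan is to write the difference as the integral of a derivative along the segment from $b$ to $a$ and then estimate the resulting integrand. Set $\varphi(s):=|b+s(a-b)|^{\kappa-2}(b+s(a-b))$ for $s\in[0,1]$, so that $\varphi(0)=|b|^{\kappa-2}b$ and $\varphi(1)=|a|^{\kappa-2}a$. Since $\kappa>1$, the map $r\mapsto|r|^{\kappa-2}r$ is $C^1$ away from the origin with derivative $(\kappa-1)|r|^{\kappa-2}$, which is locally integrable on $\mathbb{R}$ because $\kappa-2>-1$. Standard absolute continuity therefore gives the identity
\begin{equation*}
|a|^{\kappa-2}a-|b|^{\kappa-2}b=(\kappa-1)(a-b)\int_0^1|b+s(a-b)|^{\kappa-2}\,ds,
\end{equation*}
so taking absolute values reduces the claim to proving the estimate $I\leq C(|b|+|a-b|)^{\kappa-2}$, where $I:=\int_0^1|b+s(a-b)|^{\kappa-2}\,ds$.

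The case $\kappa\geq 2$ is immediate: from $|b+s(a-b)|\leq|b|+|a-b|$ and the non-negative exponent $\kappa-2$, one reads off $I\leq(|b|+|a-b|)^{\kappa-2}$, so the claim holds with $C=\kappa-1$.

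The case $1<\kappa<2$ is more delicate because the integrand blows up at any zero of $s\mapsto b+s(a-b)$. I would split into the same-sign and opposite-sign situations. In the same-sign case, setting $m:=\min(|a|,|b|)$ and $M:=\max(|a|,|b|)$, a direct computation yields $I=(M^{\kappa-1}-m^{\kappa-1})/[(\kappa-1)(M-m)]$; factoring out $M^{\kappa-1}$ and using that the scalar function $x\mapsto(1-x^{\kappa-1})/(1-x)$ extends continuously and boundedly to $[0,1]$ gives $I\leq C(\kappa)M^{\kappa-2}$. In the opposite-sign case, say $b>0>a$, $b+s(a-b)$ vanishes at $s_\star=b/|a-b|\in(0,1)$; splitting the integral at $s_\star$ and integrating each piece explicitly produces $I=(|a|^{\kappa-1}+|b|^{\kappa-1})/[(\kappa-1)|a-b|]$, and the elementary bound $|a|^{\kappa-1}+|b|^{\kappa-1}\leq 2|a-b|^{\kappa-1}$ (which uses $|a-b|=|a|+|b|$ in this sign configuration) gives $I\leq C(\kappa)|a-b|^{\kappa-2}$.

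The final step is to convert these bounds into the desired form with $(|b|+|a-b|)^{\kappa-2}$. In every configuration one verifies that $|b|+|a-b|$ is comparable, up to a universal factor of $2$, to $\max(|b|,|a|,|a-b|)$, and because $\kappa-2<0$ this forces both $M^{\kappa-2}$ and $|a-b|^{\kappa-2}$ to be controlled by a constant multiple of $(|b|+|a-b|)^{\kappa-2}$. I expect this comparability step, rather than the explicit integration, to be the main place where care is needed: each naive explicit bound on the split integral initially looks too large (because larger base $r$ in $r^{\kappa-2}$ gives a smaller value), and the proof only closes after absorbing the factor discrepancy into a constant $C=C(\kappa)$.
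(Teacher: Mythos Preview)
Your argument is correct. The integral representation via the absolutely continuous function $r\mapsto|r|^{\kappa-2}r$ is valid for $\kappa>1$ (the a.e.\ derivative $(\kappa-1)|r|^{\kappa-2}$ is locally integrable), the case $\kappa\ge2$ is immediate, and your treatment of $1<\kappa<2$ by splitting into same-sign and opposite-sign configurations goes through cleanly; the final comparability step (absorbing $M^{\kappa-2}$ or $|a-b|^{\kappa-2}$ into $(|b|+|a-b|)^{\kappa-2}$ via $|b|+|a-b|\le 2\max\{M,|a-b|\}$ in each configuration) is exactly the right way to close.

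As for the comparison: the paper does not supply its own proof of this proposition. It is quoted from \cite{KKL19} and used as a black box, so there is nothing in the paper to compare your argument against. Your write-up is therefore strictly more than what the paper provides, and it is a perfectly standard and complete derivation of the inequality.
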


\section{H\"{o}lder continuity of weak solutions}
\label{sec-3}

This section is devoted to showing the H\"{o}lder regularity of weak solutions to Eq. \eqref{main}. We start with obtaining the Caccioppoli type inequality in the nonlocal framework. Let the assumptions ($A_1$), ($A_2$), ($H_1$), ($H_2$) and \eqref{2-1} be in force. In the next lemma, set $v_+(x):=(u(x)-k)_+=\max\{u(x)-k,0\}$ and $v_-:=(u(x)-k)_-=(k-u(x))_+$, where $k\in\mathbb{R}$.

\begin{lemma} [Caccioppoli's inequality]
\label{lem3-1}
Let $1<p\leq q<\infty$. Assume that $u\in W^{s,p}(\mathbb{R}^n)\cap L^\infty_{\rm loc}(\mathbb{R}^n)$ is a weak solution to Eq. \eqref{main}. Then, for any $B_r(x_0)\subset\Omega$ and nonnegative $\phi\in C^\infty_0(B_r(x_0))$, there holds that
\begin{align}
\label{3-1}
&\quad\int_{B_r}\int_{B_r}\bigg(\left|v_\pm(x)\phi^\frac{q}{p}(x)-v_\pm(y)\phi^\frac{q}{p}(y)\right|^pK_{sp}(x,y)
+a(x,y)|v_\pm(x)\phi(x)-v_\pm(y)\phi(y)|^qK_{tq}(x,y)\bigg)\,dx\,dy  \nonumber\\
&\leq C\int_{B_r}\int_{B_r}\bigg((\max\{v_\pm(x),v_\pm(y)\})^p\left|\phi^\frac{q}{p}(x)-\phi^\frac{q}{p}(y)\right|^pK_{sp}(x,y) \nonumber\\
&\qquad\qquad\quad+a(x,y)(\max\{v_\pm(x),v_\pm(y)\})^q|\phi(x)-\phi(y)|^qK_{tq}(x,y)\bigg)\,dx\,dy  \nonumber\\
&\quad +C\int_{B_r}v_\pm(x)\phi^q(x)\,dx\left(\sup_{x\in \mathrm{supp}\,\phi}\int_{\mathbb{R}^n\setminus B_r}v_\pm^{p-1}(y)K_{sp}(x,y)+a(x,y)v_\pm^{q-1}(y)K_{tq}(x,y)\,dy\right),
\end{align}
where $C$ depends only on $p$ and $q$.
\end{lemma}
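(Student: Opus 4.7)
The plan is to substitute the test function $\eta(x) = v_\pm(x)\phi^q(x)$ into the weak formulation \eqref{2-3}. Because $u$ is locally bounded and $\phi\in C^\infty_0(B_r)$, $\eta$ lies in $W^{s,p}(\mathbb{R}^n)\cap L^\infty(\mathbb{R}^n)$ with compact support in $B_r$, so by Remark \ref{rem2-1} it is admissible. The choice of $\phi^q$ (rather than $\phi^p$, as in the pure fractional $p$-Laplace setting of \cite{DKP16}) is dictated by the double phase structure: it is the smallest power that simultaneously produces the combinations $v_\pm(x)\phi^{q/p}(x)-v_\pm(y)\phi^{q/p}(y)$ in the $p$-phase and $v_\pm(x)\phi(x)-v_\pm(y)\phi(y)$ in the $q$-phase after the usual algebraic manipulations. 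I will then split $\mathbb{R}^n\times\mathbb{R}^n$ into the local block $B_r\times B_r$, the two mixed blocks $B_r\times(\mathbb{R}^n\setminus B_r)$ and $(\mathbb{R}^n\setminus B_r)\times B_r$ (equal by symmetry of the kernels and of $a$), and $(\mathbb{R}^n\setminus B_r)^2$ (which vanishes since $\phi$ is supported in $B_r$).

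On the local block, the key step is the pair of pointwise inequalities
\begin{align*}
|u(x)-u(y)|^{p-2}(u(x)-u(y))\bigl(v_\pm(x)\phi^q(x)-v_\pm(y)\phi^q(y)\bigr)
&\geq c\bigl|v_\pm(x)\phi^{q/p}(x)-v_\pm(y)\phi^{q/p}(y)\bigr|^p \\
&\quad -C\bigl(\max\{v_\pm(x),v_\pm(y)\}\bigr)^p\bigl|\phi^{q/p}(x)-\phi^{q/p}(y)\bigr|^p,
\end{align*}
and the analogous inequality with $p$ replaced by $q$ and $\phi^{q/p}$ replaced by $\phi$. These are derived by splitting into the two cases $u(x)\geq u(y)$ and $u(x)<u(y)$, using the decomposition $v_\pm(x)\phi^{q/p}(x)-v_\pm(y)\phi^{q/p}(y) = (v_\pm(x)-v_\pm(y))\phi^{q/p}(x)+v_\pm(y)(\phi^{q/p}(x)-\phi^{q/p}(y))$ together with Proposition \ref{pro2-1} applied to absorb the cross term with a small parameter $\varepsilon$. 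Integrating the resulting inequalities against $K_{sp}$ and $aK_{tq}$ respectively yields the first two groups of terms on the right-hand side of \eqref{3-1}.

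For the mixed blocks, with $x\in B_r$ and $y\in\mathbb{R}^n\setminus B_r$ we have $\phi(y)=0$, so the $\phi^q(y)$-contributions vanish and only the pointwise positivity of $v_\pm(x)\phi^q(x)$ must be paired with a sign bound on $|u(x)-u(y)|^{p-2}(u(x)-u(y))$ and its $q$-analogue. On the set $\{v_+(x)>0\}$ one has $v_-(x)=0$ and an elementary check gives $(u(y)-u(x))_+\leq v_+(y)$, hence $|u(x)-u(y)|^{p-2}(u(x)-u(y))v_+(x)\geq -v_+(y)^{p-1}v_+(x)$ and similarly $a(x,y)|u(x)-u(y)|^{q-2}(u(x)-u(y))v_+(x)\geq -a(x,y)v_+(y)^{q-1}v_+(x)$; the case $v_-$ is identical. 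Pulling out the $x$-integral and replacing the $x$-dependence of the $y$-integrand by its supremum over $\mathrm{supp}\,\phi$ produces exactly the tail factor in \eqref{3-1}, with symmetry of the integrand absorbed into the constant $C$.

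The main obstacle I expect is the local pointwise inequality for the $p$-phase with the mismatched cutoff $\phi^q=(\phi^{q/p})^p$: compared with \cite{DKP16}, an extra factor $\phi^{q-p}$ has to be carried through the manipulation, and the Young-type inequality of Proposition \ref{pro2-1} must be applied with enough care that the leading coefficient $c$ in front of $\bigl|v_\pm(x)\phi^{q/p}(x)-v_\pm(y)\phi^{q/p}(y)\bigr|^p$ stays strictly positive after absorption of error terms. Once this is handled, the $q$-phase estimate reduces to the DKP16-type inequality with exponent $q$ weighted by $a(x,y)$, and the proof concludes by summing the four contributions.
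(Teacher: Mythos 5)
Your proposal is correct and follows essentially the same route as the paper: test with $\eta=v_\pm\phi^q$ (admissible by Remark \ref{rem2-1}), split the double integral into the local block $B_r\times B_r$ plus twice the mixed block, derive the local pointwise lower bound with a strictly positive leading coefficient via Proposition \ref{pro2-1} applied separately to the $p$-phase with cutoff $\phi^{q/p}$ and the $q$-phase with cutoff $\phi$, and bound the mixed block by $-v_\pm^{s-1}(y)v_\pm(x)\phi^q(x)$ followed by the supremum over $\mathrm{supp}\,\phi$. The "obstacle" you flag is handled in the paper exactly as you anticipate, by choosing the parameter in Proposition \ref{pro2-1} proportional to $(v_\pm(x)-v_\pm(y))/v_\pm(x)$ so that the absorbed error stays controlled by $(\max\{v_\pm(x),v_\pm(y)\})^s|\phi^{q/p \text{ or } 1}(x)-\phi^{q/p \text{ or } 1}(y)|^s$.
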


\begin{proof}
We just verify this claim for $v_+$. Let $\eta:=v_+\phi^q$ with $v_+(x):=\max\{u(x)-k,0\}$ and $0\leq\phi\in C^\infty_0(B_r(x_0))$. From Remark \ref{rem2-1}, we observe that $\eta$ can serve as a test function. Now we take $\eta$ to test the weak formulation of Eq. \eqref{main}, then
\begin{equation}
\label{3-2}
\begin{split}
0&\geq\int_{B_r}\int_{B_r}\Big(|u(x)-u(y)|^{p-2}(u(x)-u(y))(v_+(x)\phi^q(x)-v_+(y)\phi^q(y))K_{sp}(x,y)\\
&\quad\quad+a(x,y)|u(x)-u(y)|^{q-2}(u(x)-u(y))(v_+(x)\phi^q(x)-v_+(y)\phi^q(y))K_{tq}(x,y)\Big)\,dx\,dy\\
&\quad+2\int_{\mathbb{R}^n\setminus B_r}\int_{B_r}\Big(|u(x)-u(y)|^{p-2}(u(x)-u(y))v_+(x)\phi^q(x)K_{sp}(x,y)\\
&\quad\quad+a(x,y)|u(x)-u(y)|^{q-2}(u(x)-u(y))v_+(x)\phi^q(x)K_{tq}(x,y)\Big)\,dx\,dy\\
&=:I_1+2I_2.
\end{split}
\end{equation}
We first consider the integrand of $I_1$. With no loss of generality, we assume $u(x)\geq u(y)$ (otherwise exchange the roles of $x$ and $y$ below), then
\begin{align*}
&\quad |u(x)-u(y)|^{s-2}(u(x)-u(y))(v_+(x)\phi^q(x)-v_+(y)\phi^q(y))\\
&\geq (v_+(x)-v_+(y))^{s-1}(v_+(x)\phi^q(x)-v_+(y)\phi^q(y)),
\end{align*}
where $s\in\{p,q\}$. Thus, it follows that
\begin{align}
\label{3-3}
I_1&\geq \int_{B_r}\int_{B_r}\Big(|v_+(x)-v_+(y)|^{p-2}(v_+(x)-v_+(y))(v_+(x)\phi^q(x)-v_+(y)\phi^q(y))K_{sp}(x,y) \nonumber\\
&\qquad+a(x,y)|v_+(x)-v_+(y)|^{q-2}(v_+(x)-v_+(y))(v_+(x)\phi^q(x)-v_+(y)\phi^q(y))K_{tq}(x,y)\Big)\,dx\,dy.
\end{align}
For the integrand of $I_2$, we get
$$
|u(x)-u(y)|^{s-2}(u(x)-u(y))v_+(x)\geq-v_+^{s-1}(y)v_+(x)
$$
with $s\in\{p,q\}$, which implies that
\begin{equation}
\label{3-4}
\begin{split}
I_2&\geq-\int_{\mathbb{R}^n\setminus B_r}\int_{B_r}\Big(v_+^{p-1}(y)K_{sp}(x,y)+a(x,y)v_+^{q-1}(y)K_{tq}(x,y)\Big)v_+(x)\phi^q(x)\,dx\,dy\\
&\geq-\int_{B_r}v_+(x)\phi^q(x)\,dx\left(\sup_{x\in \mathrm{supp}\,\phi}\int_{\mathbb{R}^n\setminus B_r}\Big(v^{p-1}_+(y)K_{sp}(x,y)+a(x,y)v_+^{q-1}(y)K_{tq}(x,y)\Big)\,dy\right).
\end{split}
\end{equation}

We next deal with the integral in \eqref{3-3}. If $v_+(x)\geq v_+(y)$ and $\phi(y)\geq\phi(x)$, we apply Proposition \ref{pro2-1} to deduce that for every $\varepsilon\in(0,1]$,
$$
\phi^q(x)\geq(1-c_q\varepsilon)\phi^q(y)-(1+c_q\varepsilon)\varepsilon^{1-q}|\phi(x)-\phi(y)|^q.
$$
Then we choose
$$
\varepsilon=\frac{1}{\max\{1,2c_q\}}\frac{v_+(x)-v_+(y)}{v_+(x)}\in (0,1],
$$
which leads to
\begin{align*}
&\quad(v_+(x)-v_+(y))^{q-1}(v_+(x)\phi^q(x)-v_+(y)\phi^q(y))\\
&\geq \frac{1}{2}(v_+(x)-v_+(y))^q(\max\{\phi(x),\phi(y)\})^q-C(q)(\max\{v_+(x),v_+(y)\})^q|\phi(x)-\phi(y)|^q.
\end{align*}
Hence, in general cases we have
\begin{align*}
&\quad|v_+(x)-v_+(y)|^{q-2}(v_+(x)-v_+(y))(v_+(x)\phi^q(x)-v_+(y)\phi^q(y))\\
&\geq \frac{1}{2}|v_+(x)-v_+(y)|^q(\max\{\phi(x),\phi(y)\})^q-C(q)(\max\{v_+(x),v_+(y)\})^q|\phi(x)-\phi(y)|^q.
\end{align*}
On the other hand, in a similar way we derive
\begin{align*}
&\quad|v_+(x)-v_+(y)|^{p-2}(v_+(x)-v_+(y))(v_+(x)\phi^q(x)-v_+(y)\phi^q(y))\\
&\geq \frac{1}{2}|v_+(x)-v_+(y)|^p\left(\max\left\{\phi^\frac{q}{p}(x),\phi^\frac{q}{p}(y)\right\}\right)^p\\
&\quad-C(p)(\max\{v_+(x),v_+(y)\})^p\left|\phi^\frac{q}{p}(x)-\phi^\frac{q}{p}(y)\right|^p.
\end{align*}
Consequently, \eqref{3-3} becomes
\begin{align*}
I_1&\geq \frac{1}{2}\int_{B_r}\int_{B_r}\bigg[|v_+(x)-v_+(y)|^p\left(\max\left\{\phi^\frac{q}{p}(x),\phi^\frac{q}{p}(y)\right\}\right)^p
K_{sp}(x,y)\\
&\quad\quad\quad+a(x,y)|v_+(x)-v_+(y)|^q(\max\{\phi(x),\phi(y)\})^qK_{tq}(x,y)\bigg]\,dx\,dy\\
&\quad-C(p,q)\int_{B_r}\int_{B_r}\bigg[(\max\{v_+(x),v_+(y)\})^p\left|\phi^\frac{q}{p}(x)-\phi^\frac{q}{p}(y)\right|^pK_{sp}(x,y)\\
&\quad\qquad+a(x,y)(\max\{v_+(x),v_+(y)\})^q|\phi(x)-\phi(y)|^qK_{tq}(x,y)\bigg]\,dx\,dy.
\end{align*}
Notice that
\begin{align*}
|v_+(x)\phi(x)-v_+(y)\phi(y)|^s&\leq2^{s-1}|v_+(x)-v_+(y)|^s(\max\{\phi(x),\phi(y)\})^s\\
&\quad+2^{s-1}(\max\{v_+(x),v_+(y)\})^s|\phi(x)-\phi(y)|^s
\end{align*}
with $s\in\{p,q\}$. We further get
\begin{align}
\label{3-5}
I_1&\geq 2^{-(p+q)}\int_{B_r}\int_{B_r}\bigg[\left|v_+(x)\phi^\frac{q}{p}(x)-v_+(y)\phi^\frac{q}{p}(y)\right|^p
K_{sp}(x,y)  \nonumber\\
&\qquad\qquad\quad+a(x,y)|v_+(x)\phi(x)-v_+(y)\phi(y)|^qK_{tq}(x,y)\bigg]\,dx\,dy  \nonumber\\
&\quad-C(p,q)\int_{B_r}\int_{B_r}\bigg[(\max\{v_+(x),v_+(y)\})^p\left|\phi^\frac{q}{p}(x)-\phi^\frac{q}{p}(y)\right|^pK_{sp}(x,y) \nonumber\\
&\quad\qquad\qquad+a(x,y)(\max\{v_+(x),v_+(y)\})^q|\phi(x)-\phi(y)|^qK_{tq}(x,y)\bigg]\,dx\,dy.
\end{align}
Merging \eqref{3-5}, \eqref{3-4} with \eqref{3-2} leads to the desired result \eqref{3-1}. We now finish the proof.
\end{proof}

Next, we establish the second important tool, logarithmic estimate, which plays a key role in the proof of H\"{o}lder continuity. We state it as follows.

\begin{lemma}[Logarithmic lemma]
\label{lem3-2}
Let $1<p\leq q<\infty$. Suppose that $u\in W^{s,p}(\mathbb{R}^n)\cap L^\infty_{\rm loc}(\mathbb{R}^n)$ is a weak supersolution to Eq. \eqref{main} satisfying $u\geq0$ in $B_R(x_0)\subset\Omega$. Then for every $B_r(x_0)\subset B_{R/2}(x_0)$ and every $d>0$, one has
\begin{align}
&\quad\int_{B_r}\int_{B_r}\Bigg[\left|\log\left(\frac{u(x)+d}{u(y)+d}\right)\right|^pK_{sp}(x,y)
+d^{q-p}a(x,y)\left|\log\left(\frac{u(x)+d}{u(y)+d}\right)\right|^qK_{tq}(x,y)\Bigg]\,dx\,dy  \nonumber\\
&\leq Cd^{1-p}r^n\left(R^{-sp}[\mathrm{Tail}(u_-;x_0,R)]^{p-1}+R^{-tq}[\mathrm{Tail}_{a}(u_-;x_0,R)]^{q-1}\right) \nonumber\\
&\quad+Cr^n\left(r^{-sp}+M(\|u\|_{L^\infty(\Omega)}+d)^{q-p}r^{-tq}\right),
\end{align}
where $u_-=\max\{-u,0\}$ and $C$ depends only on $n,p,q,s,t,\Lambda_1,\Lambda_2$.
\end{lemma}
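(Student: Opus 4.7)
\smallskip
\noindent\textbf{Proof plan.} The strategy is the standard Di Castro–Kuusi–Palatucci approach adapted to the double phase structure: test the supersolution formulation \eqref{2-4} with a carefully chosen non-negative function that produces the logarithmic quantities on the left after a pointwise algebraic manipulation, and absorb the rest into cutoff-error and tail contributions. Specifically, fix a cutoff $\eta\in C_0^\infty(B_{3r/2}(x_0))$ with $\eta\equiv 1$ on $B_r(x_0)$, $0\le\eta\le1$ and $|\nabla\eta|\le C/r$, and take
\[
\phi(x)=\eta^{q}(x)\,(u(x)+d)^{1-p}.
\]
Since $u\ge0$ in $B_R$, $\phi$ is bounded and non-negative in $B_R$, and by Remark \ref{rem2-1} it is an admissible test function. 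Plug $\phi$ into \eqref{2-4} and split both kernels' integrals into a local part on $B_{3r/2}\times B_{3r/2}$ and the two symmetric ``cross'' contributions over $(\mathbb{R}^n\setminus B_R)\times B_{3r/2}$.

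\smallskip
For the local part, the key pointwise inequality (which goes back to DKP for the $p$-term and admits a parallel version for the $q$-term) is: for $a\ge b\ge 0$, $d>0$ and any exponent $m\in\{p,q\}$,
\[
(a-b)^{m-1}\bigl((b+d)^{1-p}-(a+d)^{1-p}\bigr)\ \ge\ c_{p,m}\,d^{m-p}\Bigl|\log\!\tfrac{a+d}{b+d}\Bigr|^{m}\ -\ \text{(cutoff error)},
\]
where the ``$d^{m-p}$'' factor is the price paid for using the $(1-p)$-power test function to control a $q$-homogeneous integrand; when $m=p$ this is the classical estimate, and when $m=q$ it follows by a mean-value/integration argument on $(t+d)^{-p}$ combined with the boundedness $u+d\le\|u\|_{L^\infty}+d$ (this is where the term $M(\|u\|_{L^\infty}+d)^{q-p}r^{-tq}$ in the statement originates). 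Applying this to the integrand of \eqref{2-4} after distributing the $\eta^q$ factors and comparing against the product rule $\phi(x)-\phi(y)=(u(x)+d)^{1-p}(\eta^q(x)-\eta^q(y))+\eta^q(y)((u(x)+d)^{1-p}-(u(y)+d)^{1-p})$ (and symmetrically) produces, on the one hand, the two logarithmic integrals that appear on the left-hand side of the claim, and on the other hand cutoff-error integrals which are controlled using $|\eta^q(x)-\eta^q(y)|\le C(q)|\eta(x)-\eta(y)|\le C|x-y|/r$, the kernel bounds ($A_2$), $0\le a\le M$ from ($H_1$), and the elementary estimates
\[
\int_{B_r}\int_{B_r}\frac{|x-y|^{p}}{|x-y|^{n+sp}}\,dx\,dy\le C r^{n-sp},\quad \int_{B_r}\int_{B_r}\frac{|x-y|^{q}}{|x-y|^{n+tq}}\,dx\,dy\le C r^{n-tq}.
\]

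\smallskip
For the cross terms, when $y\in\mathbb{R}^n\setminus B_R$ the cutoff $\eta(y)=0$ gives $\phi(y)=0$, and the supersolution contribution reduces to integrals of the form
\[
\int_{B_{3r/2}}\int_{\mathbb{R}^n\setminus B_R}|u(x)-u(y)|^{m-2}(u(x)-u(y))\,\phi(x)\,K(x,y)\,dx\,dy,\qquad m\in\{p,q\}.
\]
Using $(u(x)-u(y))^{m-1}\le C((u(x))^{m-1}+(u_-(y))^{m-1})$, the boundedness of $u$ on $B_R$, the fact that $\phi(x)\le d^{1-p}$, and the geometric inequality $|y-x_0|\le C|y-x|$ valid for $y$ far from $x_0$ and $x\in B_{3r/2}$ with $r\le R/2$, these terms are controlled by the nonlocal tails $\mathrm{Tail}(u_-;x_0,R)^{p-1}$ and $\mathrm{Tail}_a(u_-;x_0,R)^{q-1}$ times $d^{1-p}r^n$, matching the first line of the claimed bound. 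Combining the local lower bound with the cutoff-error and tail upper bounds, rearranging, and using $\eta\equiv 1$ on $B_r$ to restrict the logarithmic integrals to $B_r\times B_r$ yields the lemma.

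\smallskip
The main obstacle is the pointwise algebraic inequality for the $q$-term: one must show that the product structure $|u(x)-u(y)|^{q-2}(u(x)-u(y))\bigl((u(y)+d)^{1-p}-(u(x)+d)^{1-p}\bigr)$, built with an exponent $1-p$ rather than $1-q$, still dominates $d^{q-p}|\log((u(x)+d)/(u(y)+d))|^{q}$ up to an error of order $(\|u\|_{L^\infty}+d)^{q-p}$ times a cutoff-controllable quantity. Handling this mismatch between the two phases, and bookkeeping the resulting $d^{q-p}$ and $(\|u\|_{L^\infty}+d)^{q-p}$ factors consistently through the tail and cutoff estimates, is the delicate point of the proof.
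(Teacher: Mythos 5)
Your plan follows the same route as the paper's proof: the test function $(u+d)^{1-p}\eta^{q}$, a DKP-type pointwise inequality producing the $d^{q-p}$-weighted logarithm for the $q$-phase (the paper proves exactly the inequality you assert, via Proposition~\ref{pro2-1} with $\varepsilon=\delta\tfrac{u(x)-u(y)}{u(x)+d}$ and a two-case analysis according to whether $u(y)+d\le\tfrac{u(x)+d}{2}$, which is where the factors $(u(x)+d)^{q-p}\ge d^{q-p}$ and the $(\|u\|_{L^\infty}+d)^{q-p}$ cutoff error arise), and the same separation into cutoff-error and tail contributions. Two bookkeeping corrections: your decomposition must also cover the annulus $(B_R\setminus B_{3r/2})\times B_{3r/2}$, where $u\ge0$ so $u_-=0$, and there --- as well as for the $u(x)^{m-1}$ part of the far cross term --- you should use the cancellation $u(x)^{m-1}(u(x)+d)^{1-p}\le (u(x)+d)^{m-p}\le(\|u\|_{L^\infty(\Omega)}+d)^{m-p}$ rather than the crude product of $\phi(x)\le d^{1-p}$ with $\|u\|_{L^\infty}^{m-1}$ (which leaves an uncontrolled $d^{1-p}\|u\|_{L^\infty}^{p-1}$ factor for small $d$); these contributions belong to the second line of the stated estimate, not to the tail terms.
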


\begin{remark}
If $u\in L^\infty(\mathbb{R}^n)$, then we can see that
$$
[\mathrm{Tail}(u_-;x_0,R)]^{p-1}=R^{sp}\int_{\mathbb{R}^n\setminus B_R}\frac{u_-^{p-1}}{|x-x_0|^{n+sp}}\,dx\leq\|u\|^{p-1}_{L^\infty(\mathbb{R}^n)}
$$
and
$$
[\mathrm{Tail}_{a}(u_-;x_0,R)]^{q-1}=R^{tq}\sup_{x\in B_R}\int_{\mathbb{R}^n\setminus B_R}a(x,y)\frac{u_-^{q-1}}{|x-x_0|^{n+tq}}\,dx\leq M\|u\|^{q-1}_{L^\infty(\mathbb{R}^n)}.
$$
\end{remark}

\begin{proof}
We take a test function $\eta(x)$ as
$$
\eta(x):=(u(x)+d)^{1-p}\phi^q(x),
$$
where $\phi\in C^\infty_0(B_{3r/2})$ is such that $0\leq\phi\leq1$, $\phi\equiv1$ in $B_r$ and $|D\phi|\leq Cr^{-1}$ in $B_{3r/2}$. Obviously, $u\ge 0$ in the support of $\phi$. Now we have
\begin{align}
\label{3-6}
0&\le \int_{\mathbb{R}^n}\int_{\mathbb{R}^n}\Big(|u(x)-u(y)|^{p-2}(u(x)-u(y))(\eta(x)-\eta(y))K_{sp}(x,y)   \nonumber\\
&\qquad\qquad+a(x,y)|u(x)-u(y)|^{q-2}(u(x)-u(y))(\eta(x)-\eta(y))K_{tq}(x,y)\Big)\,dx\,dy  \nonumber\\
&=\int_{B_{2r}}\int_{B_{2r}}\bigg[|u(x)-u(y)|^{p-2}(u(x)-u(y))\left(\frac{\phi^q(x)}{(u(x)+d)^{p-1}}-\frac{\phi^q(y)}{(u(y)+d)^{p-1}}\right)K_{sp}(x,y) \nonumber\\
&\qquad\qquad+a(x,y)|u(x)-u(y)|^{q-2}(u(x)-u(y))\left(\frac{\phi^q(x)}{(u(x)+d)^{p-1}}-\frac{\phi^q(y)}{(u(y)+d)^{p-1}}\right)K_{tq}(x,y)\bigg]\,dx\,dy \nonumber\\
&\quad+2\int_{\mathbb{R}^n\setminus B_{2r}}\int_{B_{2r}}\bigg[|u(x)-u(y)|^{p-2}(u(x)-u(y))\frac{\phi^q(x)}{(u(x)+d)^{p-1}}K_{sp}(x,y)  \nonumber\\
&\quad\qquad\quad+a(x,y)|u(x)-u(y)|^{q-2}(u(x)-u(y))\frac{\phi^q(x)}{(u(x)+d)^{p-1}}K_{tq}(x,y)\bigg]\,dx\,dy  \nonumber\\
&=:I_1+I_2.
\end{align}
We are ready to estimate the integral $I_1$ under the condition that $u(x)>u(y)$. We consider the first term of the integrand of $I_1$. Let
$$
\varphi(x):=\phi^\frac{q}{p}(x) \quad\text{and} \quad \varepsilon:=\delta\frac{u(x)-u(y)}{u(x)+d}\in(0,1)
$$
with $\delta\in(0,1)$. It follows from Proposition \ref{pro2-1} that
\begin{align*}
&\quad |u(x)-u(y)|^{p-2}(u(x)-u(y))\left(\frac{\phi^q(x)}{(u(x)+d)^{p-1}}-\frac{\phi^q(y)}{(u(y)+d)^{p-1}}\right)\\
&\leq \left(\frac{u(x)-u(y)}{u(x)+d}\right)^p\varphi^p(x)\left[\frac{1-\left(\frac{u(y)+d}{u(x)+d}\right)^{1-p}}{1-\frac{u(y)+d}{u(x)+d}}
+c_p\delta\right]+c_p\delta^{1-p}|\varphi(x)-\varphi(y)|^p\\
&=:I_{1,1}+c_p\delta^{1-p}|\varphi(x)-\varphi(y)|^p.
\end{align*}
If $u(y)+d\le \frac{u(x)+d}{2}$, then
$$
I_{1,1}\leq \left(c_p\delta-\frac{p-1}{2^p}\right)\left(\frac{u(x)-u(y)}{u(y)+d}\right)^{p-1}\varphi^p(y).
$$
Choosing $\delta:=\frac{p-1}{2^{p+1}c_p}$ yields that
$$
I_{1,1}\leq -\frac{p-1}{2^{p+1}}\left(\frac{u(x)-u(y)}{u(y)+d}\right)^{p-1}\varphi^p(y).
$$
If $u(y)+d>\frac{u(x)+d}{2}$, we get
$$
I_{1,1}\leq [c_p\delta-(p-1)]\left(\frac{u(x)-u(y)}{u(x)+d}\right)^p\varphi^p(y),
$$
by the choice of $\delta$. Then we have
$$
I_{1,1}\leq -\frac{(p-1)\left(2^{p+1}-1\right)}{2^{p+1}}\left(\frac{u(x)-u(y)}{u(x)+d}\right)^p\varphi^p(y).
$$
Thus via the elementary inequalities
\begin{equation}
\label{3-6-1}
\begin{cases}(\log t)^p\leq c(t-1)^{p-1},  & \text{\textmd{if }} t>2,\\[2mm]
\log(1+t)\leq t,  & \text{\textmd{if }} t\ge0,
\end{cases}
\end{equation}
we derive
\begin{align}
\label{3-7}
&\quad |u(x)-u(y)|^{p-2}(u(x)-u(y))\left(\frac{\phi^q(x)}{(u(x)+d)^{p-1}}-\frac{\phi^q(y)}{(u(y)+d)^{p-1}}\right) \nonumber\\
&\leq -\frac{1}{C(p)}\left[\log\left(\frac{u(x)+d}{u(y)+d}\right)\right]^p\varphi^p(y)+C(p)\delta^{1-p}|\varphi(x)-\varphi(y)|^p.
\end{align}
We proceed with the second term of the integrand of $I_1$. Using Proposition \ref{pro2-1} again, we arrive at
\begin{align}
\label{3-8}
&\quad |u(x)-u(y)|^{q-2}(u(x)-u(y))\left(\frac{\phi^q(x)}{(u(x)+d)^{p-1}}-\frac{\phi^q(y)}{(u(y)+d)^{p-1}}\right) \nonumber\\
&\leq (u(x)-u(y))^{q-1}\frac{\phi^q(y)+c_q\delta\frac{u(x)-u(y)}{u(x)+d}\phi^q(y)+(1+c_q)\delta^{1-q}\left(\frac{u(x)-u(y)}{u(x)+d}\right)^{1-q}|\phi(x)-\phi(y)|^q}{(u(x)+d)^{p-1}} \nonumber\\
&\quad-(u(x)-u(y))^{q-1}\frac{\phi^q(y)}{(u(y)+d)^{p-1}} \nonumber\\
&=\left(\frac{u(x)-u(y)}{u(x)+d}\right)^{q-1}\phi^q(y)\left[1+c_q\delta\frac{u(x)-u(y)}{u(x)+d}-\left(\frac{u(x)+d}{u(y)+d}\right)^{p-1}\right](u(x)+d)^{q-p} \nonumber\\
&\quad+(1+c_q)\delta^{1-q}|\phi(x)-\phi(y)|^q(u(x)+d)^{q-p} \nonumber\\
&=\left(\frac{u(x)-u(y)}{u(x)+d}\right)^{q-1}\phi^q(y)\left[\frac{1-\left(\frac{u(y)+d}{u(x)+d}\right)^{1-p}}{1-\frac{u(y)+d}{u(x)+d}}
+c_q\delta\right](u(x)+d)^{q-p}  \nonumber\\
&\quad+(1+c_q)\delta^{1-q}|\phi(x)-\phi(y)|^q(u(x)+d)^{q-p}   \nonumber\\
&=:I_{1,2}+(1+c_q)\delta^{1-q}|\phi(x)-\phi(y)|^q(u(x)+d)^{q-p}.
\end{align}
When $u(y)+d\le \frac{u(x)+d}{2}$, it follows that
$$
I_{1,2}\leq \left(c_q\delta-\frac{p-1}{2^p}\right)\left(\frac{u(x)-u(y)}{u(y)+d}\right)^{q-1}\phi^q(y)(u(x)+d)^{q-p}.
$$
By selecting $\delta:=\frac{p-1}{2^{p+1}c_q}$ we have
\begin{equation}
\label{3-8-1}
I_{1,2}\leq -\frac{p-1}{2^{p+1}}(u(x)+d)^{q-p}\left(\frac{u(x)-u(y)}{u(y)+d}\right)^{q-1}\phi^q(y).
\end{equation}
In the case $\frac{u(y)+d}{u(x)+d}\in (\frac{1}{2},1)$, we obtain
$$
I_{1,2}\leq [c_q\delta-(p-1)](u(x)+d)^{q-p}\left(\frac{u(x)-u(y)}{u(x)+d}\right)^q\phi^q(y),
$$
and further
\begin{equation}
\label{3-8-2}
I_{1,2}\leq -\frac{(p-1)\left(2^{p+1}-1\right)}{2^{p+1}}(u(x)+d)^{q-p}\left(\frac{u(x)-u(y)}{u(x)+d}\right)^q\phi^q(y).
\end{equation}
We now apply \eqref{3-6-1} to infer that
\begin{equation}
\label{3-9}
\left[\log\left(\frac{u(x)+d}{u(y)+d}\right)\right]^q\leq C\left(\frac{u(x)-u(y)}{u(y)+d}\right)^{q-1} \quad \text{for } \frac{u(y)+d}{u(x)+d}<\frac{1}{2}
\end{equation}
and
\begin{equation}
\label{3-10}
\left[\log\left(\frac{u(x)+d}{u(y)+d}\right)\right]^q\leq C\left(\frac{u(x)-u(y)}{u(x)+d}\right)^q \quad \text{for } \frac{1}{2}\leq\frac{u(y)+d}{u(x)+d}<1.
\end{equation}
Thus, putting together \eqref{3-8-1}--\eqref{3-10} derives that
\begin{align}
\label{3-11}
I_{1,2}&\leq -\frac{1}{C(p,q)}(u(x)+d)^{q-p}\left[\log\left(\frac{u(x)+d}{u(y)+d}\right)\right]^q\phi^q(y) \nonumber\\
&\leq -\frac{1}{C(p,q)}d^{q-p}\left[\log\left(\frac{u(x)+d}{u(y)+d}\right)\right]^q\phi^q(y).
\end{align}
It follows from \eqref{3-8} and \eqref{3-11} that
\begin{align*}
&\quad |u(x)-u(y)|^{q-2}(u(x)-u(y))\left(\frac{\phi^q(x)}{(u(x)+d)^{p-1}}-\frac{\phi^q(y)}{(u(y)+d)^{p-1}}\right)\\
&\leq -\frac{1}{C}d^{q-p}\left[\log\left(\frac{u(x)+d}{u(y)+d}\right)\right]^q\phi^q(y)+C(\|u\|_{L^\infty(\Omega)}+d)^{q-p}|\phi(x)-\phi(y)|^q
\end{align*}
If $u(y)>u(x)$, then we can exchange the roles of $x$ and $y$ in the preceding calculations. Consequently,
\begin{align}
\label{3-12}
I_1 &\leq -\frac{1}{C}\int_{B_{2r}}\int_{B_{2r}}\Bigg[\left|\log\left(\frac{u(x)+d}{u(y)+d}\right)\right|^pK_{sp}(x,y)\phi^q(y) \nonumber\\
&\qquad\qquad\qquad+d^{q-p}a(x,y)\left|\log\left(\frac{u(x)+d}{u(y)+d}\right)\right|^qK_{tq}(x,y)\phi^q(y)\Bigg]\,dx\,dy  \nonumber\\
&\quad+C\int_{B_{2r}}\int_{B_{2r}}\Big[\left|\phi^\frac{q}{p}(x)-\phi^\frac{q}{p}(y)\right|^pK_{sp}(x,y) \nonumber\\
&\qquad\qquad\qquad+(\|u\|_{L^\infty(\Omega)}+d)^{q-p}a(x,y)|\phi(x)-\phi(y)|^qK_{tq}(x,y)\Big]\,dx\,dy.
\end{align}

Next, we deal with the integral $I_2$. We can easily evaluate
\begin{align*}
I_2&=2\int_{B_R\setminus B_{2r}}\int_{B_{2r}}\bigg[|u(x)-u(y)|^{p-2}(u(x)-u(y))\frac{\phi^q(x)}{(u(x)+d)^{p-1}}K_{sp}(x,y) \\
&\qquad\qquad\quad+a(x,y)|u(x)-u(y)|^{q-2}(u(x)-u(y))\frac{\phi^q(x)}{(u(x)+d)^{p-1}}K_{tq}(x,y)\bigg]\,dx\,dy\\
&\quad+2\int_{\mathbb{R}^n\setminus B_R}\int_{B_{2r}}\bigg[|u(x)-u(y)|^{p-2}(u(x)-u(y))\frac{\phi^q(x)}{(u(x)+d)^{p-1}}K_{sp}(x,y)  \\
&\qquad\qquad\quad+a(x,y)|u(x)-u(y)|^{q-2}(u(x)-u(y))\frac{\phi^q(x)}{(u(x)+d)^{p-1}}K_{tq}(x,y)\bigg]\,dx\,dy\\
&\leq C\int_{\mathbb{R}^n\setminus B_{2r}}\int_{B_{2r}}\left(K_{sp}(x,y)+\|u\|_{L^\infty(\Omega)}^{q-p}a(x,y)K_{tq}(x,y)\right)\phi^q(x)\,dx\,dy\\
&\quad+C\int_{\mathbb{R}^n\setminus B_R}\int_{B_{2r}}d^{1-p}\left[(u(y))^{p-1}_-K_{sp}(x,y)+a(x,y)(u(y))^{q-1}_-K_{tq}(x,y)\right]\phi^q(x)\,dx\,dy,
\end{align*}
where $C$ depends only on $p$ and $q$. Exploiting the assumptions ($A_2$) and ($H_1$), as well as the fact that $\mathrm{supp}\,\phi\subset\subset B_{{3r}/{2}}$, we conclude the following estimates,
\begin{align*}
&\quad\int_{\mathbb{R}^n\setminus  B_{2r}}\int_{B_{2r}}\left(K_{sp}(x,y)+\|u\|_{L^\infty(\Omega)}^{q-p}a(x,y)K_{tq}(x,y)\right)\phi^q(x)\,dx\,dy\\
&\leq Cr^{n-sp}+CM\|u\|_{L^\infty(\Omega)}^{q-p}r^{n-tq},
\end{align*}

\begin{align*}
\int_{\mathbb{R}^n\setminus B_R}\int_{B_{2r}}(u(y))^{p-1}_-K_{sp}(x,y)\phi^q(x)\,dx\,dy\leq C\frac{r^n}{R^{sp}}[\mathrm{Tail}(u_-;x_0,R)]^{p-1}
\end{align*}
and
\begin{align*}
&\quad\int_{\mathbb{R}^n\setminus B_R}\int_{B_{2r}}a(x,y)(u(y))^{q-1}_-K_{tq}(x,y)\phi^q(x)\,dx\,dy\\
&\leq \int_{\mathbb{R}^n\setminus B_R}\int_{B_\frac{3r}{2}}a(x,y)(u(y))^{q-1}_-\frac{\Lambda_2}{|x-y|^{n+tq}}\,dx\,dy\\
&\leq C\int_{\mathbb{R}^n\setminus B_R}\int_{B_\frac{3r}{2}}a(x,y)\frac{(u(y))^{q-1}_-}{|x_0-y|^{n+tq}}\,dx\,dy\\
&\leq C\frac{r^n}{R^{tq}}\left(R^{tq}\sup_{x\in B_R(x_0)}\int_{\mathbb{R}^n\setminus B_R(x_0)}a(x,y)\frac{(u(y))^{q-1}_-}{|x_0-y|^{n+tq}}\,dy\right)\\
&=:C\frac{r^n}{R^{tq}}[\mathrm{Tail}_a(u_-;x_0,R)]^{q-1}.
\end{align*}

Therefore,
\begin{align}
\label{3-13}
I_2&\leq Cd^{1-p}\left(\frac{r^n}{R^{sp}}[\mathrm{Tail}(u_-;x_0,R)]^{p-1}+\frac{r^n}{R^{tq}}[\mathrm{Tail}_a(u_-;x_0,R)]^{q-1}\right) \nonumber\\
&\quad+C\left(r^{n-sp}+M\|u\|_{L^\infty(\Omega)}^{q-p}r^{n-tq}\right).
\end{align}
Merging the displays \eqref{3-6}, \eqref{3-12}, \eqref{3-13}, we deduce that
\begin{align*}
&\quad\int_{B_{2r}}\int_{B_{2r}}\left[\left|\log\left(\frac{u(x)+d}{u(y)+d}\right)\right|^pK_{sp}(x,y)
+d^{q-p}a(x,y)\left|\log\left(\frac{u(x)+d}{u(y)+d}\right)\right|^qK_{tq}(x,y)\right]\phi^q(y)\,dx\,dy\\
&\leq C\int_{B_{2r}}\int_{B_{2r}}\Big[\left|\phi^\frac{q}{p}(x)-\phi^\frac{q}{p}(y)\right|^pK_{sp}(x,y)
+a(x,y)\frac{|\phi(x)-\phi(y)|^q}{(\|u\|_{L^\infty(\Omega)}+d)^{p-q}}K_{tq}(x,y)\Big]\,dx\,dy\\
&\quad +Cd^{1-p}\left(\frac{r^n}{R^{sp}}[\mathrm{Tail}(u_-;x_0,R)]^{p-1}+\frac{r^n}{R^{tq}}[\mathrm{Tail}_a(u_-;x_0,R)]^{q-1}\right) \nonumber\\
&\quad+C\left(r^{n-sp}+M\|u\|_{L^\infty(\Omega)}^{q-p}r^{n-tq}\right).
\end{align*}
By the mean value theorem,
$$
|\phi(x)-\phi(y)|^q\leq Cr^{-q}|x-y|^q
$$
and
$$
\left|\phi^\frac{q}{p}(x)-\phi^\frac{q}{p}(y)\right|^p\leq\left(\frac{q}{p}\phi^{\frac{q}{p}-1}(\xi)|D\phi(\xi)||x-y|\right)^p\leq Cr^{-p}|x-y|^p.
$$
Then,
\begin{align*}
&\quad\int_{B_{2r}}\int_{B_{2r}}\left[\left|\phi^\frac{q}{p}(x)-\phi^\frac{q}{p}(y)\right|^pK_{sp}(x,y)
+a(x,y)\frac{|\phi(x)-\phi(y)|^q}{(\|u\|_{L^\infty(\Omega)}+d)^{p-q}}K_{tq}(x,y)\right]\,dx\,dy\\
&\le C\int_{B_{2r}}\int_{B_{2r}}r^{-p}|x-y|^{-n+p(1-s)}+M(\|u\|_{L^\infty(\Omega)}+d)^{q-p}r^{-q}|x-y|^{-n+q(1-t)}\,dx\,dy\\
&\le C\left[r^{n-sp}+M(\|u\|_{L^\infty(\Omega)}+d)^{q-p}r^{n-tq}\right].
\end{align*}
As has been stated above, we eventually get
\begin{align*}
&\quad \int_{B_{r}}\int_{B_{r}}\Bigg[\left|\log\left(\frac{u(x)+d}{u(y)+d}\right)\right|^pK_{sp}(x,y)
+d^{q-p}a(x,y)\left|\log\left(\frac{u(x)+d}{u(y)+d}\right)\right|^qK_{tq}(x,y)\Bigg]\,dx\,dy\\
&\leq Cd^{1-p}r^n\left(R^{-sp}[\mathrm{Tail}(u_-;x_0,R)]^{p-1}+R^{-tq}[\mathrm{Tail}_a(u_-;x_0,R)]^{q-1}\right)\\
&\quad+Cr^n\left[r^{-sp}+M(\|u\|_{L^\infty(\Omega)}+d)^{q-p}r^{-tq}\right],
\end{align*}
where $C$ depends only on $n,p,q,s,t,\Lambda_1$ and $\Lambda_2$.
\end{proof}

A direct consequence of the aforementioned lemma is the following.

\begin{corollary}
\label{cor3-3}
Let $1<p\leq q<\infty$. Suppose that $u\in W^{s,p}(\mathbb{R}^n)\cap L^\infty_{\rm loc}(\mathbb{R}^n)$ is a weak solution to Eq. \eqref{main} satisfying $u\ge0$ in $B_R(x_0)\subset\Omega$. Define
$$
v:=\min\left\{(\log(a+d)-\log(u+d))_+, \log(b)\right\}
$$
with $a,d>0$ and $b>1$. Then for any $B_r:=B_r(x_0)\subset B_{R/2}(x_0)$ there holds that
\begin{align*}
&\quad \fint_{B_r}|v-(v)_{B_r}|^p\,dx\\
&\le Cd^{1-p}r^{sp}\left(R^{-sp}[\mathrm{Tail}(u_-;x_0,R)]^{p-1}+R^{-tq}[\mathrm{Tail}_a(u_-;x_0,R)]^{q-1}\right)\\
&\quad+C\left[1+M(\|u\|_{L^\infty(\Omega)}+d)^{q-p}r^{sp-tq}\right],
\end{align*}
where $C$ depends only on $n,p,q,s,t,\Lambda_1$ and $\Lambda_2$.
\end{corollary}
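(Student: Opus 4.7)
The plan is to invoke a fractional Poincar\'e inequality on $B_r$, then use the kernel lower bound in ($A_2$) together with a Lipschitz comparison between $v$ and $\log(u+d)$ to bring the resulting integrand into the form estimated in Lemma \ref{lem3-2}.

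Concretely, the first step is to apply the standard fractional Poincar\'e--Wirtinger inequality
$$
\fint_{B_r}|v-(v)_{B_r}|^p\,dx \le C(n,p,s)\,r^{sp-n}\int_{B_r}\int_{B_r}\frac{|v(x)-v(y)|^p}{|x-y|^{n+sp}}\,dx\,dy,
$$
which is valid since $v$ is bounded and hence lies in $W^{s,p}(B_r)$. The lower bound $|x-y|^{-n-sp}\le \Lambda_1 K_{sp}(x,y)$ from ($A_2$) then replaces the Euclidean kernel by $K_{sp}$ at the cost of a multiplicative constant. Next, I observe that $v=\Phi(u)$, where $\Phi$ is the composition of the $1$-Lipschitz truncations $\tau\mapsto\tau_+$ and $\tau\mapsto\min(\tau,\log b)$ with the monotone map $u\mapsto \log(a+d)-\log(u+d)$. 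Since $u\ge 0$ on $B_R(x_0)\supset B_r$, this yields the pointwise bound
$$
|v(x)-v(y)|\le \left|\log\!\left(\frac{u(x)+d}{u(y)+d}\right)\right|.
$$

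The last step is to feed this bound into Lemma \ref{lem3-2}. Since the term $d^{q-p}a(x,y)|\log((u(x)+d)/(u(y)+d))|^q K_{tq}(x,y)$ appearing on the left-hand side of that lemma is nonnegative, the $p$-term alone is controlled by the same right-hand side, so
\begin{align*}
\int_{B_r}\int_{B_r}|v(x)-v(y)|^p K_{sp}(x,y)\,dx\,dy
&\le Cd^{1-p}r^n\!\left(R^{-sp}[\mathrm{Tail}(u_-;x_0,R)]^{p-1}+R^{-tq}[\mathrm{Tail}_a(u_-;x_0,R)]^{q-1}\right)\\
&\quad+Cr^n\!\left(r^{-sp}+M(\|u\|_{L^\infty(\Omega)}+d)^{q-p}r^{-tq}\right).
\end{align*}
Multiplying by the factor $r^{sp-n}$ produced by the Poincar\'e step gives precisely the two claimed terms: the $r^n$ from Lemma \ref{lem3-2} combines with $r^{sp-n}$ into the prefactor $r^{sp}$ on the tail contribution, and into $1+M(\|u\|_{L^\infty(\Omega)}+d)^{q-p}r^{sp-tq}$ on the remainder. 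The only point requiring mild care is the pointwise Lipschitz inequality above, which follows by an elementary case analysis on the truncation levels of $\Phi$; I do not anticipate any serious obstacle in the argument.
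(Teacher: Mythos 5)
Your proposal is correct and follows essentially the same route as the paper: fractional Poincar\'e inequality, the kernel lower bound from ($A_2$), the $1$-Lipschitz truncation bound $|v(x)-v(y)|\le|\log((u(x)+d)/(u(y)+d))|$, and then Lemma \ref{lem3-2} after discarding the nonnegative $q$-term, with the factor $r^{sp-n}$ accounting for the final powers of $r$. The only cosmetic slip is the claim that boundedness of $v$ alone places it in $W^{s,p}(B_r)$ — the correct justification is that the Lipschitz bound together with Lemma \ref{lem3-2} shows the Gagliardo seminorm is finite — but this does not affect the argument.
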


\begin{proof}
From the fractional Poincar\'{e} type inequality and the condition ($A_2$) on $K_{sp}$, we derive
$$
\fint_{B_r}|v-(v)_{B_r}|^p\,dx\le Cr^{sp-n}\int_{B_r}\int_{B_r}K_{sp}(x,y)|v(x)-v(y)|^p\,dx\,dy
$$
with $C$ depending on $n,p,s,\Lambda_1$. By means of Lemma \ref{lem3-2}, we arrive at
\begin{align*}
&\quad \int_{B_r}\int_{B_r}K_{sp}(x,y)|v(x)-v(y)|^p\,dx\,dy\\
&\leq\int_{B_r}\int_{B_r}K_{sp}(x,y)\left|\log\left(\frac{u(y)+d}{u(x)+d}\right)\right|^p\,dx\,dy\\
&\leq Cd^{1-p}r^n\left(R^{-sp}[\mathrm{Tail}(u_-;x_0,R)]^{p-1}+R^{-tq}[\mathrm{Tail}_a(u_-;x_0,R)]^{q-1}\right)\\
&\quad+Cr^n\left[r^{-sp}+M(\|u\|_{L^\infty(\Omega)}+d)^{q-p}r^{-tq}\right],
\end{align*}
where $C$ depends only on $n,p,q,s,t,\Lambda_1$ and $\Lambda_2$. As a result, we have verified this claim.
\end{proof}

At this point, we shall concentrate on proving the H\"{o}lder continuity of bounded weak solutions to Eq. \eqref{main}. To this end, we will show an iteration lemma that is the key step of the proof. Before starting, let us introduce some notations. For each $j\in\mathbb{N}$, set
$$
r_j:=\sigma^j\frac{r}{2},  \sigma\in\left(0, {1}/{4}\right] \quad\text{and} \quad B_j:=B_{r_j}(x_0),
$$
where $0<r<\frac{R}{2}$ with some $R\leq1$ fulfilling $B_R(x_0)\subset\Omega$. Furthermore, denote
\begin{align*}
\omega(r_0)&:=\|u\|_{L^\infty(\mathbb{R}^n)}+\mathrm{Tail}\left(u;x_0,\frac{r}{2}\right)+\mathrm{Tail}_a\left(u;x_0,\frac{r}{2}\right)\\
&\leq\left(2+M^\frac{1}{q-1}\right)\|u\|_{L^\infty(\mathbb{R}^n)}
\end{align*}
and
$$
\omega(r_j):=\left(\frac{r_j}{r_0}\right)^\alpha\omega(r_0)
$$
for some $\alpha<\min\left\{\frac{sp}{p-1},\frac{tq}{q-1}\right\}$. 

Now we are in a position to infer an oscillation reduction.

\begin{lemma}
\label{lem3-4}
Let the assumption \eqref{2-1} be in force. Suppose that $u\in W^{s,p}(\mathbb{R}^n)\cap L^\infty(\mathbb{R}^n)$ is a weak solution to Eq. \eqref{main}. Then it holds that
$$
\mathrm{osc}_{B_j}u=\sup_{B_j}u-\inf_{B_j}u\le \omega(r_j), \quad\text{for any } j\in\mathbb{N},
$$
where these notations are fixed above.
\end{lemma}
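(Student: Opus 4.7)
The proof proceeds by strong induction on $j\in\mathbb{N}$. The base case $j=0$ is immediate since $\mathrm{osc}_{B_0}u\le 2\|u\|_{L^\infty(\mathbb{R}^n)}\le \omega(r_0)$. Assuming the oscillation bound on $B_0,\dots,B_j$, we propagate it to $B_{j+1}$. The dichotomy is: either the set where $u$ falls below the midpoint $\tfrac12(\sup_{B_j}u+\inf_{B_j}u)$ occupies at least half of $B_{2r_{j+1}}$, or the opposite super-level set does. Since Eq.\ \eqref{main} and its solution class are invariant under $u\mapsto -u$, we may assume the first. Writing $w:=u-\inf_{B_j}u$, we have $w\ge 0$ on $B_j$ and $|\{w\le \tfrac12\omega(r_j)\}\cap B_{2r_{j+1}}|\ge \tfrac12|B_{2r_{j+1}}|$.

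The goal is to produce a dimensional constant $\lambda\in(0,1)$ such that $\inf_{B_{j+1}}w\ge \lambda\omega(r_j)$; the oscillation will then contract to $\mathrm{osc}_{B_{j+1}}u\le(1-\lambda)\omega(r_j)=\omega(r_{j+1})$ upon setting $\sigma^\alpha:=1-\lambda$. I would apply Corollary \ref{cor3-3} to $w$ on the pair $B_{r_{j+1}}\subset B_{r_j/2}$, with $a:=\omega(r_j)/2$, $d:=\varepsilon\omega(r_j)$ for a small $\varepsilon>0$ and $b\gg 1$. On the measure-majority subset where $w\le \tfrac12\omega(r_j)$ the truncated log $v$ equals $\log b$ provided $b$ is fixed below an absolute constant times $1/\varepsilon$, so the Poincar\'e-type bound from Corollary \ref{cor3-3} yields an upper ceiling on $\log b$ provided its right-hand side is controlled uniformly in $j$.

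To achieve uniform control, the two tails of $w_-$ must be estimated via the induction hypothesis. Splitting $\mathbb{R}^n\setminus B_{r_j/2}$ into the dyadic annuli $B_{r_{i-1}}\setminus B_{r_i}$ for $i\le j$ plus an outer piece on which the global $L^\infty$-bound applies, and using $w_-\le \mathrm{osc}_{B_i}u\le \omega(r_i)=\sigma^{i\alpha}\omega(r_0)$ on $B_i$ by induction, the tails reduce to geometric series of the form $\sum_{i=0}^{j}\sigma^{(sp-\alpha(p-1))(j-i)}$ and its $(tq,q)$-analogue. Both converge precisely under the assumption $\alpha<\min\{sp/(p-1),tq/(q-1)\}$; invoking $tq\le sp$ from \eqref{2-1} to absorb the cross-term $(\|u\|_{L^\infty(\Omega)}+d)^{q-p}r_{j+1}^{sp-tq}$ then yields a $j$-uniform bound on the right-hand side of Corollary \ref{cor3-3}, and hence a $j$-uniform upper bound on $\log b$. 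Choosing $b$ just above this ceiling delivers a measure estimate $|\{w\le \varepsilon'\omega(r_j)\}\cap B_{r_{j+1}}|\le \theta|B_{r_{j+1}}|$ with a small $\theta\in(0,1)$ and some $\varepsilon'>0$.

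A De Giorgi iteration then converts this measure decay into the pointwise lower bound $\inf_{B_{j+1}}w\ge \lambda\omega(r_j)$: the Caccioppoli estimate of Lemma \ref{lem3-1} applied to the truncations $(k_m-w)_+$ at a decreasing sequence of levels $k_m\searrow \lambda\omega(r_j)$, combined with a fractional Sobolev embedding, yields a nonlinear recursion for the excess measures that converges to zero. The main obstacle throughout is the double-phase coupling: each use of Caccioppoli and the logarithmic lemma mixes $p$- and $q$-scales, and their compatibility rests on the hypothesis $tq\le sp$ together with the smallness of $\alpha$, which is precisely what forces the restriction $\alpha<tq/(q-1)$ recorded in Theorem \ref{thm0-1}.
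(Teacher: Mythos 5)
Your overall architecture matches the paper's: induction on $j$, a measure dichotomy at the midpoint level, the logarithmic estimate (Corollary \ref{cor3-3}) to turn the majority-measure information into smallness of $|\{u_j\le 2\varepsilon\,\omega(r_j)\}|$, tail estimates via the induction hypothesis summed as geometric series under $\alpha<\min\{sp/(p-1),tq/(q-1)\}$, a De Giorgi iteration with Caccioppoli plus fractional Sobolev, and finally the choice of $\sigma$ and then $\alpha$. But the central step is set up backwards, and as written it fails. You reduce to the case where the \emph{sub}-level set $\{u<\tfrac12(\sup_{B_j}u+\inf_{B_j}u)\}$ occupies at least half of $2B_{j+1}$ and then keep the normalization $w=u-\inf_{B_j}u$, aiming at $\inf_{B_{j+1}}w\ge\lambda\,\omega(r_j)$. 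That intermediate goal is simply false in this case: nothing prevents $u$ from attaining values arbitrarily close to $\inf_{B_j}u$ inside $B_{j+1}$, so $\inf_{B_{j+1}}w$ can be $0$ while the oscillation still contracts (because it is then $\sup_{B_{j+1}}w$ that must be small). The correct pairing is the one in the paper: when the sub-level set is the majority you must flip, taking $u_j=\omega(r_j)-(u-\inf_{B_j}u)$ (still a solution since the operator is odd and annihilates constants), so that in either case the normalized function is nonnegative on $B_j$ and \emph{at least} $\omega(r_j)/2$ on half of $2B_{j+1}$.

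This sign error propagates into your use of Corollary \ref{cor3-3}. You claim that on the majority set $\{w\le\tfrac12\omega(r_j)\}$ the truncation $v$ equals $\log b$; in fact $v=\log b$ only where $w+d\le(a+d)/b$, i.e.\ where $w$ is far smaller than $\omega(r_j)/2$, so this is false for any $b>1$. The mechanism only works in the correct orientation: on the majority set the normalized function is $\ge\omega(r_j)/2$, hence $v=0$ there, which forces $(v)_{\tilde B}\le k/2$ and then Chebyshev gives $|\{v=k\}|/|\tilde B|\le CA/k$ with $\{v=k\}=\{u_j\le 2\varepsilon\omega(r_j)\}$; with $v=\log b$ on the majority set you only learn $(v)_{\tilde B}\ge\tfrac12\log b$, which yields no measure estimate at all. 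A smaller point: you cannot simply ``set $\sigma^\alpha:=1-\lambda$'' at the end, since $\sigma$ is already pinned down by the requirement $A_0\le\mu$ for the De Giorgi iteration and $\lambda=\sigma^{tq/(q-1)-\alpha}$ depends on $\sigma$; one must first fix $\sigma$ and only then shrink $\alpha$ so that $\sigma^\alpha\ge 1-\sigma^{tq/(q-1)-\alpha}$, which is why $\alpha$ ends up depending on $n,p,q,s,t,\Lambda_1,\Lambda_2,M$.
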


\begin{proof}
We argue by induction. Assume that the claim holds true for $i\le j$. Now we are going to show it also holds for $i=j+1$. We can know that either
\begin{equation}
\label{3-14}
\frac{|2B_{j+1}\cap\{u\ge \inf_{B_j}u+\omega(r_j)/2\}|}{|2B_{j+1}|}\ge\frac{1}{2}
\end{equation}
or
\begin{equation}
\label{3-15}
\frac{|2B_{j+1}\cap\{u<\inf_{B_j}u+\omega(r_j)/2\}|}{|2B_{j+1}|}\ge\frac{1}{2}.
\end{equation}
Define
\begin{equation*}
u_j=\begin{cases}u-\inf_{B_j}u, &\text{\textmd{if \eqref{3-14} holds}},\\[2mm]
\omega(r_j)-(u-\inf_{B_j}u), &\text{\textmd{if \eqref{3-15} holds}}.
\end{cases}
\end{equation*}
Obviously, $u_j\ge0$ in $B_j$ and
\begin{equation}
\label{3-16}
\frac{|2B_{j+1}\cap\{u_j\geq\omega(r_j)/2\}|}{|2B_{j+1}|}\ge\frac{1}{2}.
\end{equation}
Moreover, $u_j$ is a weak solution such that
\begin{equation}
\label{3-17}
\sup_{B_i}|u_j|\le 2\omega(r_i) \quad\text{for any } i\in\{0,1,2,\cdots,j\}.
\end{equation}
We next introduce an auxiliary function
$$
v:=\min\left\{\left[\log\left(\frac{\omega(r_j)/2+d}{u_j+d}\right)\right]_+,k\right\} \quad\text{with } k>0.
$$
Applying Corollary \ref{cor3-3}, we obtain
\begin{align}
\label{3-18}
&\quad\fint_{2B_{j+1}}|v-(v)_{2B_{{j+1}}}|^p\,dx \nonumber\\
&\leq Cd^{1-p}\left[\left(\frac{r_{j+1}}{r_j}\right)^{sp}[\mathrm{Tail}(u_j;x_0,r_j)]^{p-1}+\frac{r_{j+1}^{sp}}{r_j^{tq}}[\mathrm{Tail}_a(u_j;x_0,r_j)]^{q-1}\right] \nonumber\\
&\quad+C\left[1+M(\|u\|_{L^\infty(\mathbb{R}^n)}+d)^{q-p}r_{j+1}^{sp-tq}\right].
\end{align}
After calculation, we get
\begin{equation}
\label{3-19}
[\mathrm{Tail}(u_j;x_0,r_j)]^{p-1}\leq C\sigma^{-\alpha(p-1)}\omega(r_j)^{p-1}
\end{equation}
with $\alpha<\frac{sp}{p-1}$, where $C$ depends on $n,p,s,\alpha$. For the details, one can refer to \cite[page 1295]{DKP16}. We now estimate
\begin{align*}
&\quad[\mathrm{Tail}_a(u_j;x_0,r_j)]^{q-1}\\
&=r_j^{tq}\sup_{x\in B_{r_j}(x_0)}\int_{\mathbb{R}^n\setminus B_{r_j}(x_0)}a(x,y)\frac{|u_j(y)|^{q-1}}{|y-x_0|^{n+tq}}\,dy\\
&\le r^{tq}_j\sum^j_{i=1}\sup_{x\in B_{i-1}}\int_{B_{i-1}\setminus B_i}a(x,y)|u_j(y)|^{q-1}|y-x_0|^{-n-tq}\,dy\\
&\quad+r^{tq}_j\sup_{x\in B_0}\int_{\mathbb{R}^n\setminus B_0}a(x,y)|u_j(y)|^{q-1}|y-x_0|^{-n-tq}\,dy\\
&\le r^{tq}_j\sum^j_{i=1}M\left(\sup_{B_{i-1}}|u_j(y)|\right)^{q-1}\int_{B_{i-1}\setminus B_i}|y-x_0|^{-n-tq}\,dy\\
&\quad+r^{tq}_j\sup_{x\in B_0}\int_{\mathbb{R}^n\setminus B_0}a(x,y)|u_j(y)|^{q-1}|y-x_0|^{-n-tq}\,dy\\
&\leq Mr^{tq}_j\sum^j_{i=1}(2\omega(r_{i-1}))^{q-1}r_i^{-tq}+r^{tq}_jM\omega(r_0)^{q-1}r_1^{-tq}\\
&\leq CM\sum^j_{i=1}\left(\frac{r_j}{r_i}\right)^{tq}\omega(r_{i-1})^{q-1}
\end{align*}
with $C$ depending on $q$ only, where we have used the inequality \eqref{3-17} and
\begin{align*}
&\quad r^{tq}_j\sup_{x\in B_0}\int_{\mathbb{R}^n\setminus B_0}a(x,y)|u_j(y)|^{q-1}|y-x_0|^{-n-tq}\,dy\\
&\le r^{tq}_j\sup_{x\in B_0}\int_{\mathbb{R}^n\setminus B_0}a(x,y)\left(|u(y)|^{q-1}+\omega(r_0)^{q-1}+\sup_{B_0}|u|^{q-1}\right)|y-x_0|^{-n-tq}\,dy\\
&\le r^{tq}_j\left(M\omega(r_0)^{q-1}r_0^{-tq}+\sup_{x\in B_0}\int_{\mathbb{R}^n\setminus B_0}a(x,y)|u(y)|^{q-1}|y-x_0|^{-n-tq}\,dy\right)\\
&\le M\left(\frac{r_j}{r_1}\right)^{tq}\omega(r_0)^{q-1}.
\end{align*}
We can readily evaluate
\begin{align}
\label{3-20}
\sum^j_{i=1}\left(\frac{r_j}{r_i}\right)^{tq}\omega(r_{i-1})^{q-1}&\leq \frac{4^{tq-\alpha(q-1)}}{(tq-\alpha(q-1))\log4}\sigma^{-\alpha(q-1)}\omega(r_j)^{q-1} \nonumber\\
&=:C\sigma^{-\alpha(q-1)}\omega(r_j)^{q-1},
\end{align}
where we utilized the fact that $\alpha<\frac{tq}{q-1}$, and $C$ depends on $n,q,t,\alpha$. Combining \eqref{3-18}, \eqref{3-19} with \eqref{3-20}, we have
\begin{align*}
&\quad\fint_{2B_{j+1}}|v-(v)_{2B_{{j+1}}}|^p\,dx\\
&\leq Cd^{1-p}\left[\sigma^{sp-\alpha(p-1)}\omega(r_j)^{p-1}+Mr_{j+1}^{sp-tq}\sigma^{tq-\alpha(q-1)}\omega(r_j)^{q-1}\right]\\
&\quad+C\left[1+Mr_{j+1}^{sp-tq}(\|u\|_{L^\infty(\mathbb{R}^n)}+d)^{q-p}\right],
\end{align*}
where $C$ depends on $n,p,q,s,t,\alpha$.

In the sequel, selecting
$$
d:=\sigma^{\frac{tq}{q-1}-\alpha}\omega(r_j),
$$
by $\omega(r_j)=\left(\frac{r_j}{r_0}\right)^\alpha\omega(r_0)$, we obtain
\begin{align}
\label{3-21}
&\quad\fint_{2B_{j+1}}|v-(v)_{2B_{{j+1}}}|^p\,dx \nonumber\\
&\leq C\left[\sigma^{\left({\frac{tq}{q-1}-\alpha}\right)(1-p)+\left({\frac{sp}{p-1}-\alpha}\right)(p-1)}
+\sigma^{\left({\frac{tq}{q-1}-\alpha}\right)(1-p)+\left({\frac{tq}{q-1}-\alpha}\right)(q-1)}\omega(r_j)^{q-p}\right] \nonumber\\
&\quad+C\left[1+M\left(\|u\|_{L^\infty(\mathbb{R}^n)}+\sigma^{\frac{tq}{q-1}-\alpha}\omega(r_j)\right)^{q-p}\right] \nonumber\\
&\leq C(1+\omega(r_j)^{q-p})+C\left(1+M\|u\|_{L^\infty(\mathbb{R}^n)}^{q-p}+M\omega(r_j)^{q-p}\right) \nonumber\\
&\leq C\left(1+M\|u\|_{L^\infty(\mathbb{R}^n)}^{q-p}+M\omega(r_j)^{q-p}\right) \nonumber\\
&\leq C\left(1+M\|u\|_{L^\infty(\mathbb{R}^n)}^{q-p}+M\omega(r_0)^{q-p}\right).
\end{align}
We have an estimate on $k$,
$$
k\le 2(k-(v)_{\tilde{B}})
$$
with $\tilde{B}:=2B_{j+1}$. We refer to \cite[page 1296]{DKP16} for the details. Furthermore,
\begin{align*}
\frac{|\tilde{B}\cap\{v=k\}|}{|\tilde{B}|}
&\le \frac{2}{|\tilde{B}|}\int_{\tilde{B}\cap\{v=k\}}(k-(v)_{\tilde{B}})\,dx\\
&\le \frac{2}{|\tilde{B}|}\int_{\tilde{B}}|v-(v)_{\tilde{B}}|\,dx\\
&\le C\left(1+M\|u\|_{L^\infty(\mathbb{R}^n)}^{q-p}+M\omega(r_0)^{q-p}\right),
\end{align*}
where $C$ depends on $n,p,q,s,t$ and $\alpha$. In what follows, we denote in short
$$
1+M\|u\|_{L^\infty(\mathbb{R}^n)}^{q-p}+M\omega(r_0)^{q-p}\leq1+\left[M+\left(2+M^\frac{1}{q-1}\right)^{q-p}\right]\|u\|_{L^\infty(\mathbb{R}^n)}^{q-p}=:A.
$$
By taking
$$
k=\log\left(\frac{\omega(r_j)/2+\varepsilon\omega(r_j)}{3\varepsilon\omega(r_j)}\right)=\log\left(\frac{1/2+\varepsilon}{3\varepsilon}\right)\approx\log\frac{1}{\varepsilon}
$$
with $\varepsilon:=\sigma^{\frac{tq}{q-1}-\alpha}$, it yields that
\begin{equation}
\label{3-22}
\frac{|\tilde{B}\cap\{u_j\leq2\varepsilon\omega(r_j)\}|}{|\tilde{B}|}\le \frac{CA}{k}\le \frac{C_{\rm log}A}{\log\frac{1}{\sigma}}
\end{equation}
with $C_{\rm log}$ depending on $n,p,q,s,t,\Lambda_1,\Lambda_2$ and $\alpha$.

We next proceed with an suitable iteration procedure. First, for each $i=0,1,\cdots$, we define
$$
\rho_i=r_{j+1}+2^{-i}r_{j+1}, \quad \overline{\rho_i}=\frac{\rho_i+\rho_{i+1}}{2}, \quad B^i=B_{\rho_i}, \quad \overline{B}^i=B_{\overline{\rho_i}}
$$
and the corresponding cut-off functions
$$
\phi_i\in C^\infty_0(\overline{B}^i), \quad 0\le \phi_i\le 1, \quad \phi_i\equiv 1 \text{ in } B^{i+1}, \quad |D\phi_i|\le C\rho^{-1}_i.
$$
In addition, set
$$
k_i=(1+2^{-i})\varepsilon\omega(r_j), \quad w_i=(k_i-u_j)_+
$$
and
$$
A_i=\frac{|B^i\cap \{u_j\leq k_i\}|}{|B^i|}=\frac{|B^i\cap \{w_j>0\}|}{|B^i|}.
$$
We employ the Caccioppoli inequality in Lemma \ref{lem3-1} to derive
\begin{align}
\label{3-23}
&\quad\int_{B^i}\int_{B^i}\left|w_i(x)\phi_i^\frac{q}{p}(x)-w_i(y)\phi_i^\frac{q}{p}(x)\right|^pK_{sp}(x,y)\,dx\,dy \nonumber\\
&\le C\int_{B^i}\int_{B^i}\bigg[(\max\{w_i(x),w_i(y)\})^p\left|\phi_i^\frac{q}{p}(x)-\phi_i^\frac{q}{p}(y)\right|^pK_{sp}(x,y) \nonumber\\
&\qquad\qquad+a(x,y)(\max\{w_i(x),w_i(y)\})^q|\phi_i(x)-\phi_i(y)|^qK_{tq}(x,y)\bigg]\,dx\,dy  \nonumber\\
&\quad +C\int_{B^i}w_i(x)\phi_i^q(x)\,dx\left(\sup_{x\in \overline{B}^i}\int_{\mathbb{R}^n\setminus B^i}w_i^{p-1}(y)K_{sp}(x,y)+a(x,y)w_i^{q-1}(y)K_{tq}(x,y)\,dy\right) \nonumber\\
&=:J_1+J_2.
\end{align}
We first evaluate
\begin{align}
\label{3-24}
&\quad A^\frac{p}{p^*}_{i+1}(k_i-k_{i+1})^p \nonumber\\
&=\frac{1}{|B^{i+1}|^\frac{p}{p^*}}\left(\int_{B^{i+1}\cap\{u_j\le k_{i+1}\}}(k_i-k_{i+1})\phi^{\frac{q}{p}p^*}_i\,dx\right)^\frac{p}{p^*} \nonumber\\
&\leq \frac{1}{|B^{i+1}|^\frac{p}{p^*}}\left(\int_{B^i}w_i^{p^*}\phi^{\frac{q}{p}p^*}_i\,dx\right)^\frac{p}{p^*} \nonumber\\
&\leq Cr^{sp-n}_{j+1}\int_{B^i}\int_{B^i}\left|w_i(x)\phi_i^\frac{q}{p}(x)-w_i(y)\phi_i^\frac{q}{p}(y)\right|^pK_{sp}(x,y)\,dx\,dy.
\end{align}
Next we consider the integral $J_1$. Notice that
$$
\left|\phi_i^\frac{q}{p}(x)-\phi_i^\frac{q}{p}(y)\right|^p\le C(n,p,q)2^{ip}r^{-p}_{j+1}|x-y|^p
$$
and
$$
|\phi_i(x)-\phi_i(y)|^q\le C(n,q)2^{iq}r^{-q}_{j+1}|x-y|^q.
$$
Hence, using the assumption ($A_2$), we get
\begin{align}
\label{3-25}
r^{sp}_{j+1}J_1&\leq C\int_{B^i\cap \{u_j\le k_i\}}\int_{B^i}2^{ip}r^{sp-p}_{j+1}k_i^p|x-y|^{p-n-sp}+2^{iq}Mr^{sp-q}_{j+1}k_i^q|x-y|^{q-n-tq}\,dy\,dx \nonumber\\
&\leq C2^{ip}k_i^p|B^i\cap\{u_j\leq k_i\}|+CM2^{iq}k_i^qr^{sp-tq}_{j+1}|B^i\cap\{u_j\leq k_i\}| \nonumber\\
&\leq C|B^i\cap\{u_j\leq k_i\}|\left[2^{ip}(\varepsilon\omega(r_j))^p+MR^{sp-tq}2^{iq}(\varepsilon\omega(r_j))^q\right],
\end{align}
where $C$ depends on $n,p,q,s,t,\Lambda_1$ and $\Lambda_2$. We proceed by evaluating $J_2$. It is easy to get
\begin{equation}
\label{3-26}
\int_{B^i}w_i(x)\phi_i^q(x)\,dx\le C\varepsilon\omega(r_j)|B^i\cap\{u_j\leq k_i\}|.
\end{equation}
In order to handle the third integral on the right-hand side of \eqref{3-23}, we first arrive at
\begin{align}
\label{3-27}
r^{sp}_{j+1}\left(\sup_{x\in \overline{B}^i}\int_{\mathbb{R}^n\setminus B^i}w_i^{p-1}(y)K_{sp}(x,y)\,dy\right)&\leq C2^{i(n+sp)}[\mathrm{Tail}(w_i;x_0,r_{j+1})]^{p-1} \nonumber\\
&\leq C2^{i(n+sp)}(\varepsilon\omega(r_j))^{p-1}
\end{align}
and in view of the condition ($A_2$), $B_{j+1}\subset B^i\subset B_j$ and \eqref{3-20}, we get
\begin{align}
\label{3-28}
&\quad r^{tq}_{j+1}\left(\sup_{x\in \overline{B}^i}\int_{\mathbb{R}^n\setminus B^i}a(x,y)w_i^{q-1}(y)K_{tq}(x,y)\,dy\right) \nonumber\\
&\leq Cr^{tq}_{j+1}\left(\sup_{x\in \overline{B}^i}\int_{\mathbb{R}^n\setminus B^i}a(x,y)\frac{2^{i(n+tq)}w^{q-1}_i(y)}{|y-x_0|^{n+tq}}\,dy\right) \nonumber\\
&\leq C2^{i(n+tq)}r^{tq}_{j+1}\sup_{x\in \overline{B}^i}\left(\int_{\mathbb{R}^n\setminus B_j}+\int_{B_j\setminus B_{j+1}}a(x,y)\frac{w^{q-1}_i(y)}{|y-x_0|^{n+tq}}\,dy\right) \nonumber\\
&\leq C2^{i(n+tq)}r^{tq}_{j+1}\left(\sup_{x\in B_j}\int_{\mathbb{R}^n\setminus B_j}a(x,y)\frac{w^{q-1}_i(y)}{|y-x_0|^{n+tq}}\,dy+M\int_{B_j\setminus B_{j+1}}\frac{(2\varepsilon\omega(r_j))^{q-1}}{|y-x_0|^{n+tq}}\,dy\right) \nonumber\\
&\leq C2^{i(n+tq)}r^{tq}_{j+1}\left(M(\varepsilon\omega(r_j))^{q-1}r^{-tq}_{j+1}+\sup_{x\in B_j}\int_{\mathbb{R}^n\setminus B_j}a(x,y)\frac{|u_j|^{q-1}+(\varepsilon\omega(r_j))^{q-1}}{|y-x_0|^{n+tq}}\,dy\right) \nonumber\\
&\leq C2^{i(n+tq)}\left[M(\varepsilon\omega(r_j))^{q-1}+\left(\frac{r_{j+1}}{r_j}\right)^{tq}(\mathrm{Tail_a}(u_j;x_0,r_j))^{q-1}\right] \nonumber\\
&\leq C2^{i(n+tq)}\left(M(\varepsilon\omega(r_j))^{q-1}+\sigma^{tq}M\sigma^{-\alpha(q-1)}(\omega(r_j))^{q-1}\right) \nonumber\\
&=CM2^{i(n+tq)}\left(1+\frac{\sigma^{tq-\alpha(q-1)}}{\varepsilon^{q-1}}\right)(\varepsilon\omega(r_j))^{q-1} \nonumber\\
&=CM2^{i(n+tq)}(\varepsilon\omega(r_j))^{q-1}.
\end{align}
Putting together \eqref{3-23}--\eqref{3-28}, we arrive at
\begin{align*}
&\quad A^\frac{p}{p^*}_{i+1}(k_i-k_{i+1})^p\\
&\leq r^{-n}_{j+1}\Big[C|B^i\cap\{u_j\leq k_i\}|\left(2^{ip}(\varepsilon\omega(r_j))^p+M2^{iq}(\varepsilon\omega(r_j))^q\right)\\
&\qquad\quad+C|B^i\cap\{u_j\leq k_i\}|\left(2^{i(n+sp)}(\varepsilon\omega(r_j))^p+M2^{i(n+tq)}(\varepsilon\omega(r_j))^q\right)\Big]\\
&\leq CA_i\Big[(1+M)2^{iq}((\varepsilon\omega(r_j))^p+(\varepsilon\omega(r_j))^q)\\
&\qquad\quad+(1+M)2^{i(n+sp)}((\varepsilon\omega(r_j))^p+(\varepsilon\omega(r_j))^q)\Big]\\
&\leq C(1+M)2^{i(n+q+sp)}(\varepsilon\omega(r_j))^p[1+(\varepsilon\omega(r_j))^{q-p}]A_i,
\end{align*}
which leads to
$$
A^\frac{p}{p^*}_{i+1}\leq C(1+M)2^{i(n+q+sp)}[1+\omega(r_0)^{q-p}]A_i
$$
and further
$$
A_{i+1}\leq C2^{i(n+q+sp)\frac{p^*}{p}}A^\frac{p^*}{p}A_i^{1+\beta}
$$
with $\beta=\frac{sp}{n-sp}$, where $C$ depends on $n,p,q,s,t,\Lambda_1,\Lambda_2,\alpha$ and $M$. Now if we obtain the following estimate on $A_0$,
\begin{equation}
\label{3-29}
A_0=\frac{|\tilde{B}\cap\{u_j\leq 2\varepsilon\omega(r_j)\}|}{|\tilde{B}|}\leq C^{-\frac{1}{\beta}}A^{-\frac{p^*}{\beta p}}2^{-\frac{(n+q+sp)p^*}{p\beta^2}}:=\mu,
\end{equation}
then we conclude that
$$
A_i\rightarrow0 \quad\text{as } i\rightarrow\infty.
$$
From \eqref{3-22}, it follows that
$$
\frac{C_{\rm log}A}{\log\frac{1}{\sigma}}\leq \mu \Rightarrow \sigma\le \mathrm{exp}\left(-\frac{C_{\rm log}A}{\mu}\right).
$$
That is,
$$
\sigma\le \mathrm{exp}\left(-CA^{1+\frac{p^*}{\beta p}}\right).
$$
We now pick
$$
\sigma=\min\left\{\frac{1}{4},\mathrm{exp}\left(-CA^{1+\frac{p^*}{\beta p}}\right)\right\},
$$
which ensures that the condition \eqref{3-29} holds true. In other words, we have proved that
$$
\mathrm{osc}_{B_{j+1}}u\leq (1-\varepsilon)\omega(r_j)=(1-\varepsilon)\sigma^{-\alpha}\omega(r_{j+1}).
$$
Finally, we choose $\alpha\in\left(0,\frac{tq}{q-1}\right)$ small such that
$$
\sigma^\alpha\geq 1-\varepsilon=1-\sigma^{\frac{tq}{q-1}-\alpha},
$$
which implies that
$$
\mathrm{osc}_{B_{j+1}}u\leq\omega(r_{j+1}).
$$
Therefore, we can find that $\alpha$ depends on $n,p,q,s,t,\Lambda_1,\Lambda_2$ and $M$. The proof is completed now.
\end{proof}

\section{Weak solutions are viscosity solutions}
\label{sec-4}

Throughout this part, we always assume that the conditions \eqref{2-1}, ($A_1$)--($A_4$), ($H_1'$) and ($H_2$)--($H_4$) hold true. The aim of this section is to verify that bounded weak solutions are viscosity solutions to Eq. \eqref{main}. One of indispensable ingredients of the proof is the comparison principle for weak solutions, which is stated as follows.

\begin{proposition}[Comparison principle]
\label{pro4-1}
Let $\Omega'\subset\subset\Omega$. Assume that $u,v$ are a weak supersolution and a weak subsolution to Eq. \eqref{main} in $\Omega$, respectively. If $u\geq v$ almost everywhere in $\mathbb{R}^n\setminus\Omega'$, then it holds that
$$
u\geq v \quad\text{almost everywhere in } \Omega'.
$$
\end{proposition}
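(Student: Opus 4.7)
The plan is to use the standard monotonicity/test-function method for nonlocal $p$-Laplacian type operators. Set $\psi := v - u$ and take $\phi := \psi_+ = (v-u)_+$ as the test function. The hypothesis $u \geq v$ a.e.\ on $\mathbb{R}^n \setminus \Omega'$ ensures $\phi$ is supported in $\overline{\Omega'} \subset\subset \Omega$; since $u, v \in W^{s,p}(\mathbb{R}^n)$ one has $\phi \in W^{s,p}(\mathbb{R}^n)$ as well, so after truncating by $\min\{\phi, k\}$ and letting $k \to \infty$ (justified via dominated convergence using the integrability \eqref{2-2} and the tail hypotheses in the solution class), $\phi$ becomes admissible in the sense of Remark \ref{rem2-1}.

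Testing the supersolution inequality \eqref{2-4} for $u$ against $\phi$ and the reversed (subsolution) inequality for $v$ against the same $\phi$, then subtracting, yields
$$
\int_{\mathbb{R}^n}\!\!\int_{\mathbb{R}^n}\bigl(\mathcal{J}_p(x,y) + \mathcal{J}_q(x,y)\bigr)\bigl(\phi(x)-\phi(y)\bigr)\,dx\,dy \leq 0,
$$
where, writing $U := u(x)-u(y)$ and $V := v(x)-v(y)$,
$$
\mathcal{J}_p := \bigl(|V|^{p-2}V - |U|^{p-2}U\bigr)K_{sp}(x,y), \quad \mathcal{J}_q := a(x,y)\bigl(|V|^{q-2}V - |U|^{q-2}U\bigr)K_{tq}(x,y).
$$
The core pointwise estimate is that for $\kappa \in \{p, q\}$,
$$
\bigl(|V|^{\kappa-2}V - |U|^{\kappa-2}U\bigr)\bigl(\phi(x)-\phi(y)\bigr) \geq 0,
$$
which follows from strict monotonicity of $t \mapsto |t|^{\kappa-2}t$ via a case analysis on the signs of $\psi(x), \psi(y)$: when both are nonnegative, $\phi(x)-\phi(y) = V - U$ and monotonicity applies directly; when both are negative, $\phi(x)-\phi(y) = 0$; in the mixed cases $V - U$ and $\phi(x)-\phi(y)$ inherit matching signs from $\psi(x)$ and $\psi(y)$.

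Since $K_{sp}, K_{tq}, a \geq 0$, the resulting integrands are pointwise nonnegative, hence must vanish a.e. Focusing on the $\mathcal{J}_p$-contribution, on $\{\psi > 0\}\times\{\psi < 0\}$ one has $V - U = \psi(x)-\psi(y) > 0$, so by strict monotonicity $|V|^{p-2}V > |U|^{p-2}U$, while $\phi(x)-\phi(y) = \psi(x) > 0$; the integrand is therefore strictly positive on that set, which must then have product measure zero. By Fubini either $|\{\psi > 0\}| = 0$ (so $v \leq u$ a.e.\ and the proposition is proved), or $|\{\psi < 0\}| = 0$. In the latter case $\psi \geq 0$ a.e.\ in $\mathbb{R}^n$, and vanishing of the integrand on the full square $\{\psi \geq 0\}^2$ forces $V = U$ for a.e.\ pair, i.e., $\psi$ is essentially constant; the constant must be $0$ since $\psi \leq 0$ a.e.\ on the positive-measure set $\mathbb{R}^n \setminus \Omega'$.

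I expect the main obstacle to be the rigorous verification that $\phi = (v-u)_+$ is admissible in Remark \ref{rem2-1} — specifically that it satisfies the global integrability \eqref{2-2}. This is a routine but nontrivial approximation: the truncated tests $\phi_k := \min\{\phi, k\}$ are immediately admissible, and the tail bounds coming from $u, v \in L^{p-1}_{sp}(\mathbb{R}^n) \cap L^{q-1}_{a,tq}(\mathbb{R}^n)$ provide uniform dominants allowing passage to the limit $k \to \infty$ in every integral on both sides.
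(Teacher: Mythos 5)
Your argument is correct and is essentially the standard proof the paper has in mind: the paper omits the proof and refers to Lemma 6 of \cite{KKP17}, which is exactly this test-function/monotonicity argument with $(v-u)_+$ (truncated to ensure admissibility in the sense of Remark \ref{rem2-1}). A minor simplification of your endgame: the integrand is already strictly positive on $\{\psi>0\}\times\{\psi\le 0\}$, and since $\{\psi\le0\}$ contains $\mathbb{R}^n\setminus\Omega'$ up to a null set and hence has positive measure, one gets $|\{\psi>0\}|=0$ directly without the Fubini dichotomy.
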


The proof of this proposition is similar to that of Lemma 6 in \cite{KKP17}, so we omit it here.
We now provide the following trivial but very important lemma.

\begin{lemma}
\label{lem4-2}
Let $l$ be an affine function and $r\in(0,+\infty)$. Then there holds that
$$
\int_{B_r(x)\setminus B_\varepsilon(x)}a(x,y)|l(x)-l(y)|^{q-2}(l(x)-l(y))K_{tq}(x,y)\,dy=0
$$
for any $\varepsilon\in(0,r)$.
\end{lemma}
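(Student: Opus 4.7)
The plan is to prove the lemma by a symmetry argument after an affine change of variables. Write $l(y) = l(x) + v \cdot (y-x)$ where $v := \nabla l$, so $l(x) - l(y) = -v \cdot (y-x)$ and
$$|l(x)-l(y)|^{q-2}(l(x)-l(y)) = -|v\cdot (y-x)|^{q-2}\, v\cdot(y-x).$$
Substituting $z = y-x$ sends $B_r(x)\setminus B_\varepsilon(x)$ to the annulus $A_{\varepsilon,r} := \{z : \varepsilon < |z| < r\}$, which is symmetric under $z \mapsto -z$. The integral becomes
$$- \int_{A_{\varepsilon,r}} a(x, x+z)\, |v\cdot z|^{q-2}(v\cdot z)\, K_{tq}(x, x+z)\, dz.$$

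The key point is that the weight $F(z) := a(x, x+z)\, K_{tq}(x, x+z)$ is an even function of $z$ on $A_{\varepsilon,r}$, so that the whole integrand is odd in $z$ (since $|v\cdot z|^{q-2}(v\cdot z)$ is odd). To check evenness, I would use the hypotheses ($A_1$), ($A_3$) for the kernel and ($H_2$), ($H_3$) for the coefficient: by translation invariance, $K_{tq}(x, x+z) = K_{tq}(0, z)$ and $K_{tq}(x, x-z) = K_{tq}(0, -z)$; by symmetry, $K_{tq}(0,-z) = K_{tq}(-z, 0)$; and applying translation invariance once more (shifting by $+z$) gives $K_{tq}(-z, 0) = K_{tq}(0, z)$. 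Hence $K_{tq}(x, x+z) = K_{tq}(x, x-z)$. The identical chain of identities, using ($H_2$) and ($H_3$), yields $a(x, x+z) = a(x, x-z)$. Therefore $F(-z) = F(z)$.

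Combining, the integrand is odd in $z$ on the symmetric annulus $A_{\varepsilon,r}$. Since $\varepsilon > 0$, the integrand is bounded on $A_{\varepsilon,r}$ by the growth condition ($A_2$) and by ($H_1$), so Fubini/pairing $z$ with $-z$ is legitimate, and the integral vanishes. There is no real obstacle here beyond bookkeeping the symmetry identities; the content of the lemma is precisely that, for an affine test function, the $K_{tq}$-operator evaluated on $l$ coincides with a principal-value integral of an odd function over a symmetric annulus.
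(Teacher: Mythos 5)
Your proposal is correct and follows essentially the same route as the paper: substitute $z=y-x$, use translation invariance and symmetry of $a$ and $K_{tq}$ to see that the weight is even in $z$, and conclude that the integrand is odd over the symmetric annulus $\{\varepsilon<|z|<r\}$, hence integrates to zero. Your extra remark that integrability on the annulus follows from ($A_2$) and the boundedness of $a$ is a harmless elaboration of what the paper leaves implicit.
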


\begin{proof}
Let $l(x)=a+b\cdot x$. Then by translation invariance and symmetry properties of $a(x,y)$ and $K_{tq}(x,y)$, we have
\begin{align*}
&\quad \int_{B_r(x)\setminus B_\varepsilon(x)}a(x,y)|b\cdot(x-y)|^{q-2}b\cdot(x-y)K_{tq}(x,y)\,dy\\
&=-\int_{B_r\setminus B_\varepsilon}a(x,x+z)|b\cdot z|^{q-2}b\cdot zK_{tq}(x,x+z)\,dz\\
&=-\int_{B_r\setminus B_\varepsilon}a(0,z)|b\cdot z|^{q-2}b\cdot zK_{tq}(0,z)\,dz\\
&=0.
\end{align*}
We get the desired result.
\end{proof}

Next, we are ready to demonstrate that the principle value defining the nonlocal double phase operator $\mathcal{L}$ is well-defined provided that the involved functions are regular enough. To this end, we have to establish the forthcoming two uniform estimates on small balls. For simplicity, we set $g_k(t):=|t|^{k-2}t$ ($k\in\{p,q\}$) in the sequel.

\begin{lemma}
\label{lem4-3}
Let $B_\varepsilon(x)\subset E\subset\subset\Omega$ and $u\in C^2(E)$. If $p>\frac{2}{2-s}$ or $E\subset\subset\{d_u>0\}$, then there holds that
\begin{align*}
\Bigg|\mathrm{P.V.}\int_{B_\varepsilon(x)}&\Big[|u(x)-u(y)|^{p-2}(u(x)-u(y))K_{sp}(x,y)\\
&+a(x,y)|u(x)-u(y)|^{q-2}(u(x)-u(y))K_{tq}(x,y)\Big]\,dy\Bigg|\leq C(\varepsilon)
\end{align*}
with $C(\varepsilon)$ independent of $x$ and $C(\varepsilon)\rightarrow0$ as $\varepsilon\rightarrow0$.
\end{lemma}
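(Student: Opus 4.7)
The plan is to use Lemma \ref{lem4-2} and its obvious analogue for the $p$-part to symmetrize away the tangent affine contribution, then Taylor-expand and invoke Proposition \ref{pro2-2} so the PV integrand is pointwise dominated by an integrable one whose $L^1$ norm on $B_\varepsilon(x)$ shrinks as $\varepsilon\to 0$.

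Concretely, set $l(y):=u(x)+Du(x)\cdot(y-x)$. Lemma \ref{lem4-2} gives $\mathrm{P.V.}\int_{B_\varepsilon(x)} a(x,y) g_q(l(x)-l(y))K_{tq}(x,y)\,dy=0$, and the identical change-of-variable argument (using only ($A_1$) and ($A_3$)) applied with $a\equiv 1$ yields $\mathrm{P.V.}\int_{B_\varepsilon(x)} g_p(l(x)-l(y))K_{sp}(x,y)\,dy=0$. Subtracting these zero contributions rewrites the PV integral to be estimated as
\begin{align*}
\int_{B_\varepsilon(x)}&\bigl[g_p(u(x)-u(y))-g_p(l(x)-l(y))\bigr]K_{sp}(x,y)\,dy\\
&{}+\int_{B_\varepsilon(x)} a(x,y)\bigl[g_q(u(x)-u(y))-g_q(l(x)-l(y))\bigr]K_{tq}(x,y)\,dy,
\end{align*}
and the PV can be dropped once absolute convergence of this rewritten expression is established.

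For the pointwise bound, set $A:=u(x)-u(y)$ and $B:=l(x)-l(y)=-Du(x)\cdot(y-x)$. Since $u\in C^2(E)$ and $B_\varepsilon(x)\subset E$, a second-order Taylor expansion gives $|A-B|\le\tfrac{1}{2}\|D^2u\|_{L^\infty(E)}|y-x|^2$ and $|B|\le\|Du\|_{L^\infty(E)}|y-x|$. Proposition \ref{pro2-2} together with ($A_2$) and ($H_1$) then dominates the two integrands, respectively, by
\[
C(|Du(x)|\,|y-x|+|y-x|^2)^{p-2}|y-x|^{2-n-sp}\ \ \text{and}\ \ CM(|Du(x)|\,|y-x|+|y-x|^2)^{q-2}|y-x|^{2-n-tq}.
\]
The last step is an integrability check, which splits along the dichotomy in the hypothesis. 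In the case $p>\tfrac{2}{2-s}$ I use $(|B|+|A-B|)^{k-2}|A-B|\le C|A-B|^{k-1}$ when $1<k<2$ and $(|B|+|A-B|)^{k-2}\le C(|B|^{k-2}+|A-B|^{k-2})$ when $k\ge 2$, producing an integrand bound of the form $C|y-x|^{2p-2-n-sp}$ plus lower-order terms that are integrable unconditionally; the hypothesis is precisely $2p-2>sp$, and the relations $q\ge p$, $tq\le sp$ in \eqref{2-1} propagate this to $2q-2\ge 2p-2>sp\ge tq$, handling the $q$-part. In the alternative case $E\subset\subset\{d_u>0\}$, one has $c_0:=\inf_E|Du|>0$; after rotating coordinates so that $Du(x)=|Du(x)|e_1$ and writing the integration in cylindrical polar form, a split between the set where $|Du(x)\cdot(y-x)|$ dominates the quadratic error and its complement yields convergent integrals in both parts, with no restriction on $p$ beyond $p>1$. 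In either scenario the outcome is a bound of the form $C\varepsilon^\gamma$ with $\gamma=\gamma(n,p,q,s,t)>0$ and a constant $C$ depending only on $n,p,q,s,t,\Lambda_1,\Lambda_2,M,\|u\|_{C^2(E)}$ (and, in the second scenario, on $c_0$), in particular independent of $x$, so $C(\varepsilon)\to 0$ as $\varepsilon\to 0$.

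The main technical obstacle is the integrability analysis in the regime $1<p<2$ under the alternative hypothesis $E\subset\subset\{d_u>0\}$: the linear term $B=-Du(x)\cdot(y-x)$ vanishes on the hyperplane perpendicular to $Du(x)$, so the naive estimate $|B|\ge c_0|y-x|$ is false, and one cannot simply replace $(|B|+|A-B|)^{p-2}$ by a fixed power of $|y-x|$. The cylindrical-coordinates split outlined above is what rescues integrability by exploiting the thinness of the sliver where the Taylor error $A-B$ overtakes $B$.
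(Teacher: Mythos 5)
Your proposal is correct and follows essentially the same route as the paper: subtract the tangent affine function's (vanishing, by Lemma \ref{lem4-2} and its $K_{sp}$ analogue) contribution, control the difference via Proposition \ref{pro2-2} and a second-order Taylor expansion, and then run the exponent case analysis, with the delicate regime $1<p\le\frac{2}{2-s}$, $\inf_E|Du|>0$ handled by the angular integrability of $|Du(x)\cdot\omega|^{p-2}$ over the sphere --- which is precisely the content of Lemma 3.6 of \cite{KKL19} that the paper cites at this step. The only quibble is that your displayed ``dominating function'' $C(|Du(x)|\,|y-x|+|y-x|^2)^{p-2}|y-x|^{2-n-sp}$ is not a valid upper bound when $p<2$ (the exponent is negative and $|Du(x)\cdot(y-x)|$ can be much smaller than $|Du(x)|\,|y-x|$), but your subsequent case analysis works with the correct quantity $|Du(x)\cdot(y-x)|$ and explicitly repairs this, so the argument stands.
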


\begin{proof}
Notice an obvious fact that if $p>\frac{2}{2-s}$, then $\frac{2}{2-t}\leq\frac{2}{2-s}<p\leq q$. If $|Du(x)|=0$ and $p>\frac{2}{2-s}$, we can readily verify this claim by
$$
|u(x)-u(y)|\leq C|x-y|^2
$$
for some constant $C$ (depending only on $\|u\|_{C^2(E)}$). Next we focus on the scenario $Du(x)\neq0$. Suppose that $l(y):=u(x)+Du(x)\cdot(y-x)$ is an affine part of $u$ near $x$. It follows from Proposition \ref{pro2-2} and Lemma \ref{lem4-2} that
\begin{align*}
&\quad\Bigg|\int_{B_\varepsilon(x)}\Big[|u(x)-u(y)|^{p-2}(u(x)-u(y))K_{sp}(x,y)\\
&\qquad\qquad+a(x,y)|u(x)-u(y)|^{q-2}(u(x)-u(y))K_{tq}(x,y)\Big]\,dy\Bigg|\\
&=\Bigg|\int_{B_\varepsilon(x)}\Big[\big(g_p(u(x)-u(y))-g_p(l(x)-l(y))\big)K_{sp}(x,y)+g_p(l(x)-l(y))K_{sp}(x,y)\\
&\qquad\qquad+a(x,y)\big[\big(g_q(u(x)-u(y))-g_q(l(x)-l(y))\big)+g_q(l(x)-l(y))\big]K_{tq}(x,y)\Big]\,dy\Bigg|\\
&\leq\int_{B_\varepsilon(x)}\Big[\big|g_p(u(x)-u(y))-g_p(l(x)-l(y))\big|K_{sp}(x,y)\\
&\qquad\qquad+a(x,y)\big|g_q(u(x)-u(y))-g_q(l(x)-l(y))\big|K_{tq}(x,y)\Big]\,dy\\
&\leq C\int_{B_\varepsilon(x)}\big(|Du(x)\cdot(y-x)|+|u(y)-l(y)|\big)^{p-2}|u(y)-l(y)|K_{sp}(x,y)\,dy\\
&\quad+CM\int_{B_\varepsilon(x)}\big(|Du(x)\cdot(y-x)|+|u(y)-l(y)|\big)^{q-2}|u(y)-l(y)|K_{tq}(x,y)\,dy\\
&=:I_1+I_2.
\end{align*}
For the integral $I_1$, as the proof of Lemma 3.6 \cite{KKL19}, we have
\begin{equation*}
I_1\leq\begin{cases}C\tau\sup_E|Du|^{p-2}\varepsilon^{p(1-s)}+C\tau^{p-1}\varepsilon^{p-2+p(1-s)}  &\text{\textmd{for }} p\geq2,\\[2mm]
C\tau^{p-1}\varepsilon^{p-2+p(1-s)}  &\text{\textmd{for }} \frac{2}{2-s}<p<2,
\end{cases}
\end{equation*}
where $\tau:=\sup_E|D^2u|$. Similarly, for $I_2$ we obtain
\begin{equation}
\label{4-1}
I_2\leq\begin{cases}CM\tau\sup_E|Du|^{q-2}\varepsilon^{q(1-t)}+CM\tau^{q-1}\varepsilon^{q-2+q(1-t)}  &\text{\textmd{for }} q\geq2,\\[2mm]
CM\tau^{q-1}\varepsilon^{q-2+q(1-t)}  &\text{\textmd{for }} \frac{2}{2-t}<q<2.
\end{cases}
\end{equation}
Hence, combining these two inequalities, we get
\begin{align*}
&\quad\Bigg|\int_{B_\varepsilon(x)}\left(g_p(u(x)-u(y))K_{sp}(x,y)+a(x,y)g_q(u(x)-u(y))K_{tq}(x,y)\right)\,dy\Bigg|\\
&\leq\begin{cases}C\left(\varepsilon^{p(1-s)}+\varepsilon^{q(1-t)}+\varepsilon^{p-2+p(1-s)}+\varepsilon^{q-2+q(1-t)}\right) &\text{\textmd{for }} p\geq 2,\\[2mm]
C\left(\varepsilon^{q(1-t)}+\varepsilon^{p-2+p(1-s)}+\varepsilon^{q-2+q(1-t)}\right) &\text{\textmd{for }} \frac{2}{2-s}<p<2\leq q,\\[2mm]
C\left(\varepsilon^{p-2+p(1-s)}+\varepsilon^{q-2+q(1-t)}\right)  &\text{\textmd{for }} \frac{2}{2-s}<p\leq q<2.
\end{cases}
\end{align*}

If $1<p\leq\frac{2}{2-s}$ and $E\subset\subset\{d_u(x)>0\}$, we arrive at
$$
I_1\leq C\tau\sup_E|Du|^{p-2}\varepsilon^{p(1-s)}.
$$
Here we used the fact that $\inf_E|Du|>0$. Moreover, we estimate
$$
I_2\leq CM\tau\sup_E|Du|^{q-2}\varepsilon^{q(1-t)},
$$
provided that $1<q\leq \frac{2}{2-t}$ ($1<p\leq q\leq\frac{2}{2-t}\leq\frac{2}{2-s}$). Therefore, in the case $1<p\leq \frac{2}{2-s}$,
\begin{align*}
&\quad\Bigg|\int_{B_\varepsilon(x)}\left(g_p(u(x)-u(y))K_{sp}(x,y)+a(x,y)g_q(u(x)-u(y))K_{tq}(x,y)\right)\,dy\Bigg|\\
&\leq\begin{cases}C\left(\varepsilon^{p(1-s)}+\varepsilon^{q(1-t)}\right) &\text{\textmd{for }} 1<q\leq \frac{2}{2-t},\\[2mm]
C\left(\varepsilon^{p(1-s)}+\varepsilon^{q-2+q(1-t)}\right) &\text{\textmd{for }} \frac{2}{2-t}<q<2,\\[2mm]
C\left(\varepsilon^{p(1-s)}+\varepsilon^{q(1-t)}+\varepsilon^{q-2+q(1-t)}\right)  &\text{\textmd{for }}  q\geq 2.
\end{cases}
\end{align*}
Now in all cases, this claim is proved.
\end{proof}


\begin{lemma}
\label{lem4-4}
Let $1<p\leq \frac{2}{2-s}$, $E\subset\subset\Omega$ and $u\in C^2_\beta(E)$ with $\beta>\frac{sp}{p-1}$. Assume that $B_\varepsilon(x)\subset E$ and $x$ is such that $d_u(x)<\varepsilon<1$. Then it holds that
\begin{align*}
\Bigg|\mathrm{P.V.}\int_{B_\varepsilon(x)}&\Big[|u(x)-u(y)|^{p-2}(u(x)-u(y))K_{sp}(x,y)\\
&+a(x,y)|u(x)-u(y)|^{q-2}(u(x)-u(y))K_{tq}(x,y)\Big]\,dy\Bigg|\leq C(\varepsilon)
\end{align*}
with $C(\varepsilon)$ independent of $x$ and $C(\varepsilon)\rightarrow0$ as $\varepsilon\rightarrow0$.
\end{lemma}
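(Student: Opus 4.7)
The plan is to mimic Lemma \ref{lem4-3} (subtract an affine approximation and use Lemma \ref{lem4-2} to kill the first-order part in principal value sense) while using the $C^2_\beta$ regularity to control $Du$ near the critical set. Note first that $p\le\frac{2}{2-s}$ is equivalent to $\frac{sp}{p-1}\ge 2$, so $\beta>2$. Pick $x_*\in N_u$ with $|x-x_*|=d_u(x)=:\delta<\varepsilon$ and set $l(y):=u(x)+Du(x)\cdot(y-x)$. The $C^2_\beta$ bound $|D^2u(z)|\le C\,d_u(z)^{\beta-2}$, together with $d_u(\eta)\le|\eta-x_*|$ along segments through $x_*$, yields
\[
|Du(x)|\le C\delta^{\beta-1},\qquad |R(y)|:=|u(y)-l(y)|\le C(|y-x|+\delta)^{\beta-2}|y-x|^2,
\]
the first by integrating $D^2u$ along $[x_*,x]$ (which converges since $\beta>2$), the second by Taylor's theorem applied with the pointwise bound $d_u(x+t(y-x))\le t|y-x|+\delta$.

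By Lemma \ref{lem4-2} and its unweighted analogue for $K_{sp}$ (which holds by the same symmetry and translation argument), the principal value of $g_k(l(x)-l(y))$ against the respective kernel vanishes on $B_\varepsilon(x)$, so it is enough to bound
\[
\int_{B_\varepsilon(x)}|g_k(u(x)-u(y))-g_k(l(x)-l(y))|\,|x-y|^{-n-\sigma k}\,dy,\qquad (\sigma,k)\in\{(s,p),(t,q)\},
\]
using Proposition \ref{pro2-2}. I would decompose $B_\varepsilon(x)=\{|y-x|\le\delta\}\cup\{|y-x|>\delta\}$. On the inner region $|l(x)-l(y)|$ has size $\delta^{\beta-1}|y-x||\cos\theta|$ and $|R(y)|\le C\delta^{\beta-2}|y-x|^2$; using $(|l|+|R|)^{k-2}\le|l|^{k-2}$ for $1<k<2$ and Proposition \ref{pro2-2}, together with the angular integrability $\int_{S^{n-1}}|\cos\theta|^{k-2}\,d\omega<\infty$ (valid whenever $k>1$), yields a contribution of order $\delta^{\beta(k-1)-\sigma k}$. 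On the outer region the refined remainder bound collapses to $|R(y)|\le C|y-x|^\beta$ and $|l(x)-l(y)|\le C|y-x|^\beta$, so Proposition \ref{pro2-2} gives $|g_k(u(x)-u(y))-g_k(l(x)-l(y))|\le C|R(y)|^{k-1}\le C|y-x|^{\beta(k-1)}$, which integrates to a contribution of order $\varepsilon^{\beta(k-1)-\sigma k}$.

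Summing the pieces (and using $(H_1)$ to bound $a$) gives $C(\varepsilon)=C\bigl(\varepsilon^{\beta(p-1)-sp}+M\varepsilon^{\beta(q-1)-tq}\bigr)$, independent of $x$. Both exponents are positive: the first is $\beta(p-1)-sp>0$ by hypothesis; and $tq\le sp$ with $p\le q$ implies $\frac{sp}{p-1}\ge\frac{tq}{q-1}$, so $\beta>\frac{sp}{p-1}\ge\frac{tq}{q-1}$ forces the second as well. Hence $C(\varepsilon)\to 0$ as $\varepsilon\to 0$, as required.

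The main difficulty is the sub-quadratic case $1<k<2$ (which includes $p$ and possibly $q$) of Proposition \ref{pro2-2}: the factor $(|l|+|R|)^{k-2}$ blows up on the hyperplane $\{Du(x)\cdot(y-x)=0\}$ where the affine cancellation fails. The inner-outer splitting at the natural scale $|y-x|\sim d_u(x)$, paired with the angular integrability $\int_{S^{n-1}}|\cos\theta|^{k-2}\,d\omega<\infty$, is the technical heart of the argument, and this matching is exactly what uses (and requires) the hypothesis $\beta>\frac{sp}{p-1}$.
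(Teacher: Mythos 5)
Your argument is essentially the paper's: the paper also subtracts the affine part $l$, kills it in the principal‑value sense via Lemma \ref{lem4-2} (and its obvious $K_{sp}$ analogue), applies Proposition \ref{pro2-2}, and then simply cites \cite[Lemma 3.7]{KKL19} for the resulting estimates $I_1\leq C\varepsilon^{\beta(p-1)-sp}$, $I_2\leq CM\varepsilon^{\beta(q-1)-tq}$ — the inner/outer decomposition at scale $|y-x|\sim d_u(x)$ with the angular integrability of $|\cos\theta|^{k-2}$ is exactly the content of that cited lemma, which you have written out. One correction: on the inner region you need the \emph{lower} bound $|Du(x)|\geq c\,d_u(x)^{\beta-1}$ (so that $|l(x)-l(y)|^{k-2}\leq C(\delta^{\beta-1}|y-x||\cos\theta|)^{k-2}$ with $k-2<0$); integrating $D^2u$ along $[x_*,x]$ only gives the upper bound $|Du(x)|\leq C\delta^{\beta-1}$, and the lower bound is instead supplied directly by the first term $\min\{d_u(x),1\}^{\beta-1}/|Du(x)|$ in the definition of $C^2_\beta(E)$, which is precisely why that class is defined the way it is.
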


\begin{proof}
If $Du(x)=0$, this conclusion can be deduced easily because $u\in C^2_\beta(E)$ implies that
$$
|u(x)-u(y)|\leq C|x-y|^\beta
$$
for some constant $C>0$. Next we concentrate on the scenario that $Du(x)\neq0$. Suppose that $l(y):=u(x)+Du(x)\cdot(y-x)$ is an affine part of $u$ near $x$. We derive through Lemma \ref{lem4-2} and Proposition \ref{pro2-2} that
\begin{align}
\label{4-2}
&\quad\Bigg|\int_{B_\varepsilon(x)}\Big[|u(x)-u(y)|^{p-2}(u(x)-u(y))K_{sp}(x,y)   \nonumber\\
&\qquad\qquad+a(x,y)|u(x)-u(y)|^{q-2}(u(x)-u(y))K_{tq}(x,y)\Big]\,dy\Bigg|   \nonumber\\
&\leq\int_{B_\varepsilon(x)}\Big[\big|g_p(u(x)-u(y))-g_p(l(x)-l(y))\big|K_{sp}(x,y)    \nonumber\\
&\qquad\qquad+a(x,y)\big|g_q(u(x)-u(y))-g_q(l(x)-l(y))\big|K_{tq}(x,y)\Big]\,dy    \nonumber\\
&\leq C\int_{B_\varepsilon(x)}\big(|Du(x)\cdot(y-x)|+|u(y)-l(y)|\big)^{p-2}|u(y)-l(y)|K_{sp}(x,y)\,dy  \nonumber\\
&\quad+CM\int_{B_\varepsilon(x)}\big(|Du(x)\cdot(y-x)|+|u(y)-l(y)|\big)^{q-2}|u(y)-l(y)|K_{tq}(x,y)\,dy  \nonumber\\
&=:I_1+I_2.
\end{align}
Analogously to the proof of Lemma 3.7 in \cite{KKL19}, we know that
\begin{equation}
\label{4-3}
I_1\leq C\varepsilon^{\beta(p-1)-sp}.
\end{equation}
When $1<q\leq \frac{2}{2-t}$ ($1<p\leq q\leq\frac{2}{2-t}\leq\frac{2}{2-s}$), then it yields that
\begin{equation}
\label{4-4}
I_2\leq CM\varepsilon^{\beta(q-1)-tq},
\end{equation}
in a similar way to evaluating $I_1$. Here we utilized the fact that $\beta>\frac{sp}{p-1}(\geq\frac{tq}{q-1})$ in the estimates on $I_1,I_2$. Thereby, putting together \eqref{4-1}--\eqref{4-4}, we obtain
\begin{align*}
&\quad\Bigg|\int_{B_\varepsilon(x)}\left(g_p(u(x)-u(y))K_{sp}(x,y)+a(x,y)g_q(u(x)-u(y))K_{tq}(x,y)\right)\,dy\Bigg|\\
&\leq\begin{cases}C\left(\varepsilon^{\beta(p-1)-sp}+\varepsilon^{\beta(q-1)-tq}\right) &\text{\textmd{for }} 1<q\leq \frac{2}{2-t},\\[2mm]
C\left(\varepsilon^{\beta(p-1)-sp}+\varepsilon^{q-2+q(1-t)}\right) &\text{\textmd{for }} \frac{2}{2-t}<q<2,\\[2mm]
C\left(\varepsilon^{\beta(p-1)-sp}+\varepsilon^{q(1-t)}+\varepsilon^{q-2+q(1-t)}\right)  &\text{\textmd{for }} q\geq 2.
\end{cases}
\end{align*}
We now complete the proof.
\end{proof}

Next, we shall prove the continuity property for the nonlocal double phase operator $\mathcal{L}$.

\begin{lemma}
\label{lem4-5}
Let $B_r(x_0)\subset\Omega$ and $\psi\in C^2(B_r(x_0))\cap L^{p-1}_{sp}(\mathbb{R}^n)\cap L^{q-1}_{a,tq}(\mathbb{R}^n)$. When $1<p\leq \frac{2}{2-s}$ and $D\psi(x_0)=0$, we suppose that $\psi\in C^2_\beta(B_r(x_0))$ with $\beta>\frac{sp}{p-1}$. Then $\mathcal{L}\psi$ is continuous in $B_r(x_0)$.
\end{lemma}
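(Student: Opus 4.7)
Fix $x_0'\in B_r(x_0)$ and choose $\rho>0$ with $\overline{B}_{2\rho}(x_0')\subset B_r(x_0)$. My plan is to establish continuity of $\mathcal{L}\psi$ at $x_0'$ by the standard near/far decomposition. Writing $F(x,y):=g_p(\psi(x)-\psi(y))K_{sp}(x,y)+a(x,y)g_q(\psi(x)-\psi(y))K_{tq}(x,y)$ for the full integrand, for every $\varepsilon\in(0,\rho/2)$ and every $x\in B_\varepsilon(x_0')$ I would split
\[
\mathcal{L}\psi(x)=\underbrace{\mathrm{P.V.}\!\int_{B_\varepsilon(x)}\!F(x,y)\,dy}_{A_\varepsilon(x)}+\underbrace{\int_{B_{2\varepsilon}(x_0')\setminus B_\varepsilon(x)}\!F(x,y)\,dy}_{B_\varepsilon(x)}+\underbrace{\int_{\mathbb{R}^n\setminus B_{2\varepsilon}(x_0')}\!F(x,y)\,dy}_{T_\varepsilon(x)},
\]
so that it suffices to prove that $|A_\varepsilon(x)|+|B_\varepsilon(x)|\to 0$ uniformly for $x\in B_\varepsilon(x_0')$ as $\varepsilon\to 0$ and that, for each fixed small $\varepsilon$, $T_\varepsilon$ is continuous at $x_0'$.

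For $A_\varepsilon(x)$ I would distinguish cases according to the regularity hypotheses in force at $x_0'$. If $p>\frac{2}{2-s}$, or if $D\psi(x_0')\neq 0$, then on a small neighbourhood $E$ of $x_0'$ the gradient is bounded below, so $E\subset\subset\{d_\psi>0\}$ and Lemma \ref{lem4-3} gives $|A_\varepsilon(x)|\leq C(\varepsilon)\to 0$ uniformly on $E$. If instead $1<p\leq\frac{2}{2-s}$ with $D\psi(x_0')=0$, the hypothesis $\psi\in C^2_\beta$ is in force and, since $x_0'\in N_\psi$ forces $d_\psi(x)\leq |x-x_0'|<\varepsilon$ for every $x\in B_\varepsilon(x_0')$, Lemma \ref{lem4-4} delivers the same uniform smallness. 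The annular correction $B_\varepsilon(x)$ sits inside $\{\varepsilon\leq|y-x|\leq 3\varepsilon\}$, so $(A_2)$, $(H_1')$ together with $|\psi(x)-\psi(y)|\leq \|D\psi\|_{L^\infty(E)}|x-y|$ yield $|F(x,y)|\leq C|x-y|^{p-1-n-sp}+CM|x-y|^{q-1-n-tq}$; integration in polar coordinates then gives $|B_\varepsilon(x)|\leq C(\varepsilon^{p(1-s)}+\varepsilon^{q(1-t)})\to 0$, uniformly in $x$.

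For the tail $T_\varepsilon$, continuity at $x_0'$ follows by Lebesgue's dominated convergence theorem. Pointwise convergence of the integrand as $x\to x_0'$ is clear from continuity of $\psi$ together with $(A_4)$ and $(H_4)$, while an integrable majorant on $\{|y-x_0'|\geq 2\varepsilon\}$ is obtained from $(A_2)$, $(H_1')$, boundedness of $\psi$ on $\overline{B}_\rho(x_0')$, and the assumptions $\psi\in L^{p-1}_{sp}(\mathbb{R}^n)\cap L^{q-1}_{a,tq}(\mathbb{R}^n)$. The $a$-weighted tail space is precisely what produces an integrable majorant for the $q$-phase when the kernel is modulated by $a(x,y)$, so the two tail classes in the hypothesis of the lemma are tailored exactly to make this step go through.

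The main obstacle, and the sole reason the $C^2_\beta$ condition is built into Definition \ref{def1}(b), is the degenerate case $1<p\leq \frac{2}{2-s}$ with $D\psi(x_0')=0$: Lemma \ref{lem4-3} is unavailable because $|D\psi|$ is not bounded away from zero near a critical point, and the control of the principal value must instead come from the higher-order vanishing encoded in $C^2_\beta$. The threshold $\beta>\frac{sp}{p-1}$ is exactly what makes $\varepsilon^{\beta(p-1)-sp}\to 0$ in Lemma \ref{lem4-4}, while the companion requirement $\beta>\frac{tq}{q-1}$ for the $q$-phase is automatic since $tq\leq sp$ by \eqref{2-1}. Assembling the uniform smallness of $A_\varepsilon+B_\varepsilon$ with the continuity of $T_\varepsilon$ and letting first $x\to x_0'$ then $\varepsilon\to 0$ yields $\mathcal{L}\psi(x)\to \mathcal{L}\psi(x_0')$; since $x_0'\in B_r(x_0)$ was arbitrary this gives continuity on the whole ball.
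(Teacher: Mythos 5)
Your overall strategy (near/far splitting, Lemmas \ref{lem4-3}--\ref{lem4-4} for the principal value near the singularity, dominated convergence for the far field) is the same as the paper's, but your specific decomposition introduces an annular correction term $B_\varepsilon(x)=\int_{B_{2\varepsilon}(x_0')\setminus B_\varepsilon(x)}F(x,y)\,dy$, and the estimate you give for it is wrong. With the Lipschitz bound $|\psi(x)-\psi(y)|\le C|x-y|$ and ($A_2$), the integrand is of size $|x-y|^{p-1-n-sp}$, and integrating over $\{\varepsilon\le|x-y|\le 3\varepsilon\}$ in polar coordinates gives $\int_\varepsilon^{3\varepsilon}r^{p-2-sp}\,dr\sim\varepsilon^{\,p-1-sp}$, not $\varepsilon^{p(1-s)}$ (you are off by one power of $\varepsilon$, as if the numerator were $|x-y|^{p}$ rather than $|x-y|^{p-1}$). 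The exponent $p-1-sp$ is nonpositive whenever $sp\ge p-1$, i.e.\ $p\le\frac{1}{1-s}$ --- which \emph{always} holds in the degenerate case $1<p\le\frac{2}{2-s}$ (there $sp\ge 2(p-1)$) and also holds for a range of $p>\frac{2}{2-s}$ --- so $\sup_{x\in B_\varepsilon(x_0')}|B_\varepsilon(x)|$ does not tend to $0$; it can blow up like $\varepsilon^{p-1-sp}$. This is not repaired by first subtracting the affine part $l(y)=\psi(x)+D\psi(x)\cdot(y-x)$ either: Lemma \ref{lem4-2} kills the affine contribution only on annuli \emph{centered at} $x$, whereas your annulus $B_{2\varepsilon}(x_0')\setminus B_\varepsilon(x)$ is asymmetric about $x$, and the uncancelled affine contribution is again of order $|D\psi(x)|^{p-1}\varepsilon^{p-1-sp}$.

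The paper avoids this entirely by never fixing the tail domain: for the two points $x$ and $y$ it excises the balls $B_\rho(x)$ and $B_\rho(y)$ centered at the respective points (so Lemmas \ref{lem4-3}--\ref{lem4-4} apply directly and there is no annulus), and then compares $\int_{\mathbb{R}^n\setminus B_\rho(y)}F(y,z)\,dz$ with $\int_{\mathbb{R}^n\setminus B_\rho(x)}F(x,z)\,dz$ by writing the moving domain via the factor $(1-\chi_{B_\rho(y)}(z))$ and applying dominated convergence, with a majorant supported on $\mathbb{R}^n\setminus B_{\rho/2}(x)$ built from ($A_2$), ($H_1'$) and the tail-space assumptions. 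If you want to keep your fixed tail domain, the correct repair is to stop claiming uniform smallness of $B_\varepsilon$: instead note that $B_\varepsilon(x_0')$ is the difference of two principal-value truncations at $x_0'$ and is therefore small by Lemmas \ref{lem4-3}--\ref{lem4-4}, and that $x\mapsto B_\varepsilon(x)+T_\varepsilon(x)$ is continuous at $x_0'$ for fixed $\varepsilon$; but then you are essentially reconstructing the paper's argument. The remaining parts of your proposal (the case analysis for $A_\varepsilon$, the reading of the $C^2_\beta$ hypothesis at critical points, and the dominated-convergence treatment of $T_\varepsilon$) match the paper and are fine.
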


\begin{proof}
Let $x\in B_r(x_0)$ and $\varepsilon>0$. Our goal is to prove that
$$
|\mathcal{L}\psi(y)-\mathcal{L}\psi(x)|<\varepsilon
$$
as long as $|y-x|$ is small enough. When $D\psi(x)\neq0$, then by the $C^2$-regularity of $\psi$ we get $D\psi(y)\neq0$ with $|x-y|\leq \delta$ for some $\delta>0$. Via Lemma \ref{lem4-3}, it yields that there is a small enough constant $\rho>0$ such that
\begin{align}
\label{4-5}
\Bigg|\mathrm{P.V.}\int_{B_\rho(y)}&\Big[|\psi(y)-\psi(z)|^{p-2}(\psi(y)-\psi(z))K_{sp}(y,z)  \nonumber\\
&+a(y,z)|\psi(y)-\psi(z)|^{q-2}(\psi(y)-\psi(z))K_{tq}(y,z)\Big]\,dz\Bigg|<\frac{\varepsilon}{3}
\end{align}
with $|y-x|<\delta$. When $p>\frac{2}{2-s}$, we have \eqref{4-5} by using Lemma \ref{lem4-3} again (regardless of the value of $D\psi(x)$). In turn, when $1<p\leq \frac{2}{2-s}$ and $D\psi(x)=0$, then we can see $d_\psi(y)<\rho$ whenever $|x-y|<\rho$ and thus we also get \eqref{4-5} according to Lemma \ref{lem4-4}. In addition, we arrive at
\begin{align}
\label{4-6}
\Bigg|\mathrm{P.V.}\int_{B_\rho(x)}&\Big[|\psi(x)-\psi(z)|^{p-2}(\psi(x)-\psi(z))K_{sp}(x,z)  \nonumber\\
&+a(x,z)|\psi(x)-\psi(z)|^{q-2}(\psi(x)-\psi(z))K_{tq}(x,z)\Big]\,dz\Bigg|<\frac{\varepsilon}{3}
\end{align}
in the case $1<p\leq q<\infty$.

We next consider the nonlocal contribution. We first could readily find that
\begin{align*}
&\quad (1-\chi_{B_\rho(y)}(z))\Big[|\psi(y)-\psi(z)|^{p-2}(\psi(y)-\psi(z))K_{sp}(y,z)\\
&\qquad\qquad\qquad\quad+a(y,z)|\psi(y)-\psi(z)|^{q-2}(\psi(y)-\psi(z))K_{tq}(y,z)\Big]\\
&\stackrel{y\rightarrow x}{\longrightarrow}(1-\chi_{B_\rho(x)}(z))\Big[|\psi(x)-\psi(z)|^{p-2}(\psi(x)-\psi(z))K_{sp}(x,z) \\
&\qquad\qquad\quad\qquad+a(x,z)|\psi(x)-\psi(z)|^{q-2}(\psi(x)-\psi(z))K_{tq}(x,z)\Big]
\end{align*}
for almost everywhere $z\in \mathbb{R}^n$, due to the continuity of $a(\cdot,z)$, $K_{sp}(\cdot,z)$ and $K_{tq}(\cdot,z)$. Afterwards, in view of $a(x,y)>0$, we evaluate
\begin{align*}
&\quad (1-\chi_{B_\rho(y)}(z))\Big[|\psi(y)-\psi(z)|^{p-2}(\psi(y)-\psi(z))K_{sp}(y,z)\\
&\qquad\qquad\qquad\quad+a(y,z)|\psi(y)-\psi(z)|^{q-2}(\psi(y)-\psi(z))K_{tq}(y,z)\Big]\\
&\leq(1-\chi_{B_{\rho/2}(x)}(z))\Big[2(|\psi(x)-\psi(z)|^{p-1}+1)K_{sp}(x,z)\\
&\qquad\qquad\qquad\quad+4a(x,z)(|\psi(x)-\psi(z)|^{q-1}+1)K_{tq}(x,z)\Big],
\end{align*}
when $y$ is sufficiently close to $x$. Hence it follows, from the dominated convergence theorem as well as the condition that $\psi\in L^{p-1}_{sp}(\mathbb{R}^n)\cap L^{q-1}_{a,tq}(\mathbb{R}^n)$, that
\begin{align}
\label{4-7}
&\quad\int_{\mathbb{R}^n\setminus B_\rho(y)}\Big[|\psi(y)-\psi(z)|^{p-2}(\psi(y)-\psi(z))K_{sp}(y,z) \nonumber\\
&\qquad\qquad\quad+a(y,z)|\psi(y)-\psi(z)|^{q-2}(\psi(y)-\psi(z))K_{tq}(y,z)\Big]\,dz \nonumber\\
&\rightarrow\int_{\mathbb{R}^n\setminus B_\rho(x)}\Big[|\psi(x)-\psi(z)|^{p-2}(\psi(x)-\psi(z))K_{sp}(x,z) \nonumber\\
&\qquad\qquad\quad+a(x,z)|\psi(x)-\psi(z)|^{q-2}(\psi(x)-\psi(z))K_{tq}(x,z)\Big]\,dz
\end{align}
by sending $y\rightarrow x$. Merging the display \eqref{4-5}--\eqref{4-7}, it yields that $|\mathcal{L}\psi(y)-\mathcal{L}\psi(x)|<\varepsilon$, whenever $|y-x|$ is small enough. We now finish the proof.
\end{proof}

The forthcoming lemma formulates the continuity of operator $\mathcal{L}$ regarding perturbations that are regular enough.

\begin{lemma}
\label{lem4-6}
Let $B_r(x_0)\subset\Omega$ and $\psi\in C^2(B_r(x_0))\cap L^{p-1}_{sp}(\mathbb{R}^n)\cap L^{q-1}_{a,tq}(\mathbb{R}^n)$. If $1<p\leq \frac{2}{2-s}$ and $D\psi(x_0)=0$, we suppose that $x_0$ is an isolated critical point and $\psi\in C^2_\beta(B_r(x_0))$ with $\beta>\frac{sp}{p-1}$. Then, for each $\varepsilon>0$ and $\varrho>0$, there are $\vartheta>0$, $\rho\in(0,\varrho)$ and $\eta\in C^2_0(B_{\rho/2}(x_0))$ with $0\leq \eta\leq1$ and $\eta(x_0)=1$ such that, when $0\leq\theta<\vartheta$, $\psi_\theta\equiv\psi+\theta\eta$ fulfills
$$
\sup_{x\in B_r(x_0)}|\mathcal{L}\psi(x)-\mathcal{L}\psi_\theta(x)|<\varepsilon.
$$
\end{lemma}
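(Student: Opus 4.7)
The plan is to decompose the difference $\mathcal{L}\psi_\theta(x)-\mathcal{L}\psi(x)$ into a singular part on a small ball $B_{\rho'}(x)$ (controlled by Lemmas \ref{lem4-3}/\ref{lem4-4}) and a nonlocal remainder on $\mathbb{R}^n\setminus B_{\rho'}(x)$ (controlled by Proposition \ref{pro2-2}), and then take the supremum over $x\in B_r(x_0)$. The key point is to choose the bump $\eta$ so that $\psi_\theta$ satisfies the hypotheses of Lemmas \ref{lem4-3} and \ref{lem4-4} with constants uniform in small $\theta$.

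\emph{Step 1: Choice of the bump.} First I would fix $\rho\in(0,\varrho)$ small enough that, by the isolation of $x_0$ as a critical point of $\psi$ (needed in the delicate range $1<p\le 2/(2-s)$), the annulus $\overline{B_{\rho/2}(x_0)}\setminus B_{\rho/4}(x_0)$ satisfies $\inf|D\psi|=:c_0>0$. Take a standard radial bump $\eta\in C^2_0(B_{\rho/2}(x_0))$ with $\eta\equiv1$ on $B_{\rho/4}(x_0)$, $0\le\eta\le1$, and $\|D\eta\|_\infty+\rho\|D^2\eta\|_\infty\le C/\rho$. Then set $\vartheta_0:=c_0/(2\|D\eta\|_\infty)$. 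For $0\le\theta<\vartheta_0$ this ensures that $D\psi_\theta\equiv D\psi$ on $B_{\rho/4}(x_0)\cup (B_r(x_0)\setminus B_{\rho/2}(x_0))$, while $|D\psi_\theta|\ge c_0/2$ on the annulus, so the critical set of $\psi_\theta$ inside $B_r(x_0)$ coincides with that of $\psi$, and $\psi_\theta\in C^2_\beta(B_r(x_0))$ with seminorm bounded uniformly in $\theta\in[0,\vartheta_0)$.

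\emph{Step 2: Local contribution.} Applying Lemmas \ref{lem4-3} and \ref{lem4-4} to $\psi$ and to $\psi_\theta$, I would pick $\rho'\in(0,\rho/8)$ so that uniformly in $x\in B_r(x_0)$ and $\theta\in[0,\vartheta_0)$,
\[
\left|\mathrm{P.V.}\int_{B_{\rho'}(x)}[\cdots]_{\psi}\,dy\right|+\left|\mathrm{P.V.}\int_{B_{\rho'}(x)}[\cdots]_{\psi_\theta}\,dy\right|<\varepsilon/2.
\]
This is legitimate because the bounds $C(\rho')$ produced by those lemmas depend only on $\|\psi\|_{C^2}$, the $C^2_\beta$-seminorm, $M$, and $n,p,q,s,t$, all uniformly controlled in $\theta$ by Step~1. \emph{Step 3: Nonlocal remainder.} For $|y-x|\ge\rho'$, Proposition \ref{pro2-2} yields
\[
\bigl|g_k(\psi_\theta(x)-\psi_\theta(y))-g_k(\psi(x)-\psi(y))\bigr|\le C\bigl(|\psi(x)-\psi(y)|+\theta|\eta(x)-\eta(y)|\bigr)^{k-2}\theta|\eta(x)-\eta(y)|
\]
for $k\in\{p,q\}$. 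Since $\eta$ is supported in $B_{\rho/2}(x_0)$, the integrand vanishes unless $x\in B_{\rho/2}(x_0)$ or $y\in B_{\rho/2}(x_0)$. Combining this with the tail hypotheses $\psi\in L^{p-1}_{sp}(\mathbb{R}^n)\cap L^{q-1}_{a,tq}(\mathbb{R}^n)$, the bound $a\le M$, and $\|\eta\|_\infty\le 1$, integrating against $K_{sp}$ and $aK_{tq}$ produces a bound of the form $C(\rho,\rho',\|\psi\|_\infty,\mathrm{Tail}(\psi;x_0,\cdot),\mathrm{Tail}_a(\psi;x_0,\cdot))\,\theta$ uniformly in $x\in B_r(x_0)$. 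Choosing $\vartheta<\vartheta_0$ so small that this is below $\varepsilon/2$ completes the argument.

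The main obstacle is Step 1: unless $\eta\equiv1$ on an inner sub-ball so that $D\psi_\theta=D\psi$ near $x_0$, adding $\theta\eta$ may shift or distort the critical set of $\psi_\theta$, causing the $C^2_\beta$-constant (hence the right-hand side of Lemma \ref{lem4-4}) to blow up as $\theta\to 0^+$ and ruining the uniform local estimate. Once the bump is arranged to freeze the critical set, the proof reduces to a direct combination of Lemmas \ref{lem4-3}--\ref{lem4-5} and Proposition \ref{pro2-2}.
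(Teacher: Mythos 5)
Your proposal follows essentially the same route as the paper's proof: choose the bump with $\eta\equiv 1$ on an inner ball so that the critical set and the $C^2_\beta$-bounds of $\psi_\theta$ are uniform in small $\theta$, get the uniform small-ball estimate from Lemmas \ref{lem4-3}--\ref{lem4-4}, and control the far-field difference via Proposition \ref{pro2-2} and the tail integrability of $\psi$. The only loose point is the summary ``bound of the form $C\theta$'' in Step 3: for $k<2$ the estimate from Proposition \ref{pro2-2} actually yields $C\theta^{k-1}$ (as the paper's case analysis shows), which still vanishes as $\theta\to0^+$, so the conclusion is unaffected.
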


\begin{proof}
Let $\varepsilon,\varrho>0$. If $D\psi(x_0)\neq0$, then by continuity there are $\tau>0$ and $\rho\in(0,\varrho)$ satisfying $|D\psi|>\tau$ in $B_{2\rho}(x_0)$. Let $\eta\in C^2_0(B_{\rho/2}(x_0))$ be such that $0\leq \eta\leq1$ and $\eta(x_0)=1$. Thus it yields that $|D\psi_\theta|>\frac{\tau}{2}$ in $B_{2\rho}(x_0)$, provided that $0\leq \theta<\overline{\vartheta}$ for some $\overline{\vartheta}>0$. At this point, applying Lemma \ref{lem4-3}, we could choose $\delta>0$ so small that, for any $x\in B_{\rho}(x_0)$ and $\theta\in[0,\overline{\vartheta})$, there holds that
\begin{align}
\label{4-8}
\Bigg|\mathrm{P.V.}\int_{B_\delta(x)}\Big[&|\psi_\theta(x)-\psi_\theta(z)|^{p-2}(\psi_\theta(x)-\psi_\theta(z))K_{sp}(x,z) \nonumber\\
&+a(x,z)|\psi_\theta(x)-\psi_\theta(z)|^{q-2}(\psi_\theta(x)-\psi_\theta(z))K_{tq}(x,z)\Big]\,dz\Bigg|<\frac{\varepsilon}{3}.
\end{align}
Additionally, we can get the inequality \eqref{4-8} via Lemma \ref{lem4-3} as well, in the case $p>\frac{2}{2-s}$.

Next, we are going to check the scenario that $1<p\leq \frac{2}{2-s}$ and $D\psi(x_0)=0$. It is well known that $x_0$ is an isolated critical point of $\psi$, so we can see that $D\psi\neq0$ in $B_{2\rho}(x_0)\setminus\{x_0\}$ with $\rho>0$ small enough. We take a smooth function $\eta\in C^2_0(B_{\rho/2}(x_0))$ such that $0\leq\eta\leq1$ and $\eta\equiv1$ in $B_{\rho/4}(x_0)$ and $|D^2\eta|\leq Md_\eta^{\beta-2}$ with some constant $M>0$. Hence we can see that $D\psi_\theta\neq0$ in $B_{2\rho}(x_0)\setminus\{x_0\}$ if $\theta$ is sufficiently small, and further $d_\psi=d_{\psi_\theta}$ in $B_{\rho}(x_0)$ for every $\theta$ as above, as well as $\frac{1}{2}|D\psi|\leq|D\psi_\theta|\leq2|D\psi|$ in $B_{\rho}(x_0)$. Employing the fact that $d_\eta\leq d_\psi=d_{\psi_\theta}$ in $B_{\rho}(x_0)$, we infer easily that $|D^2\psi_\theta|\leq Cd_{\psi_\theta}^{\beta-2}$ in $B_{\rho}(x_0)$ if $\theta$ is small enough. So we know $\psi_\theta\in C^2_\beta(B_{\rho}(x_0))$ and further by means of Lemma \ref{lem4-4} we obtain \eqref{4-8} for $\delta\in(0,\rho)$ sufficiently small.

Now set $x\in B_\rho(x_0)$. It follows from \eqref{4-8} and Proposition \ref{pro2-2} that
\begin{align*}
&\quad |\mathcal{L}\psi(x)-\mathcal{L}\psi_\theta(x)|\\
&\leq \frac{2}{3}\varepsilon+\int_{\mathbb{R}^n\setminus B_\delta(x)}\Big[|g_p(\psi(x)-\psi(y))-g_p(\psi_\theta(x)-\psi_\theta(y))|K_{sp}(x,y)\\
&\qquad\qquad\qquad\quad+a(x,y)|g_q(\psi(x)-\psi(y))-g_q(\psi_\theta(x)-\psi_\theta(y))|K_{tq}(x,y)\Big]\,dy\\
&\leq \frac{2}{3}\varepsilon+C\theta\int_{\mathbb{R}^n\setminus B_\delta(x)}\left[\frac{(|\psi(x)-\psi(y)|+2\theta)^{p-2}}{|x-y|^{n+sp}}+a(x,y)
\frac{(|\psi(x)-\psi(y)|+2\theta)^{q-2}}{|x-y|^{n+tq}}\right]\,dy.
\end{align*}
When $1<p\leq q<2$, we proceed with estimating
\begin{align*}
&\quad |\mathcal{L}\psi(x)-\mathcal{L}\psi_\theta(x)|\\
&\leq \frac{2}{3}\varepsilon+C\int_{\mathbb{R}^n\setminus B_\delta(x)}\frac{\theta^{p-1}}{|x-y|^{n+sp}}+a(x,y)
\frac{\theta^{q-1}}{|x-y|^{n+tq}}\,dy\\
&\leq \frac{2}{3}\varepsilon+C(\theta^{p-1}\delta^{-sp}+M\theta^{q-1}\delta^{-tq})<\varepsilon,
\end{align*}
as long as $\theta$ is sufficiently small. On the other hand, for $2\leq p\leq q$, we have
\begin{align*}
&\quad |\mathcal{L}\psi(x)-\mathcal{L}\psi_\theta(x)|\\
&\leq \frac{2}{3}\varepsilon+C\int_{\mathbb{R}^n\setminus B_\delta(x)}\frac{\theta(\theta^{p-2}+|\psi(x)|^{p-2}+|\psi(y)|^{p-2})}{|x-y|^{n+sp}}\,dy\\
&\quad+C\int_{\mathbb{R}^n\setminus B_\delta(x)}a(x,y)\frac{\theta(\theta^{q-2}+|\psi(x)|^{q-2}+|\psi(y)|^{q-2})}{|x-y|^{n+tq}}\,dy\\
&\leq \frac{2}{3}\varepsilon+C\left(\theta^{p-1}\delta^{-sp}+\theta\delta^{-sp}\sup_{B_\rho(x_0)}|\psi|^{p-2}+\theta\delta^{-sp}\sup_{z\in B_\rho(x_0)}[\mathrm{Tail}(\psi;z,\delta)]^{p-2}\right)\\
&\quad+CM\left(\theta^{q-1}\delta^{-tq}+\theta\delta^{-tq}\sup_{B_\rho(x_0)}|\psi|^{q-2}\right)+CM^\frac{1}{q-1}\theta\delta^{-tq}\sup_{z\in B_\rho(x_0)}[\mathrm{Tail}_a(\psi;z,\delta)]^{q-2}<\varepsilon,
\end{align*}
as long as $\theta$ is sufficiently small. Here we observe that $\psi\in L^{p-1}_{sp}(\mathbb{R}^n)\cap L^{q-1}_{a,tq}(\mathbb{R}^n)$ implies that both $\sup_{z\in B_\rho(x_0)}[\mathrm{Tail}(\psi;z,\delta)]^{p-2}$ and $\sup_{z\in B_\rho(x_0)}[\mathrm{Tail}_a(\psi;z,\delta)]^{q-2}$ are finite. Finally, in the case $1<p<2\leq q$, we can readily get
\begin{align*}
&\quad |\mathcal{L}\psi(x)-\mathcal{L}\psi_\theta(x)|\\
&\leq \frac{2}{3}\varepsilon+C\left(\theta^{p-1}\delta^{-sp}+M\theta^{q-1}\delta^{-tq}+M\theta\delta^{-tq}\sup_{B_\rho(x_0)}|\psi|^{q-2}\right)\\
&\quad+CM^\frac{1}{q-1}\theta\delta^{-tq}\sup_{z\in B_\rho(x_0)}[\mathrm{Tail}_a(\psi;z,\delta)]^{q-2}<\varepsilon,
\end{align*}
if $\theta$ is sufficiently small.

In all cases, we arrive at
$$
|\mathcal{L}\psi(x)-\mathcal{L}\psi_\theta(x)|<\varepsilon
$$
for any $x\in B_\rho(x_0)$, whenever $\theta$ is small enough. By taking the supremum over $B_\rho(x_0)$, this assertion follows.
\end{proof}

The next result states that a $C^2$-regular supersolution is a weak supersolution.

\begin{lemma}
\label{lem4-7}
Let $u\in C^2(B_r(x_0))\cap L^{p-1}_{sp}(\mathbb{R}^n)\cap L^{q-1}_{a,tq}(\mathbb{R}^n)$. If $1<p\leq \frac{2}{2-s}$ and $Du(x_0)=0$, we suppose that $u\in C^2_\beta(B_r(x_0))$ with $\beta>\frac{sp}{p-1}$. Furthermore, assume that $\mathcal{L}u\geq0$ in the pointwise sense in $B_r(x_0)$. Then we infer that $u$ is a continuous weak supersolution in $B_r(x_0)$.
\end{lemma}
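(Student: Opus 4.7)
The plan is to fix a nonnegative $\phi\in C_0^\infty(B_r(x_0))$ and deduce the weak supersolution inequality from the pointwise one $\mathcal{L}u\geq 0$. Continuity is automatic from $u\in C^2(B_r(x_0))$, so the entire content is to establish
\[
\mathcal{E}(u,\phi):=\int_{\mathbb{R}^n}\int_{\mathbb{R}^n}\Big[g_p(u(x)-u(y))(\phi(x)-\phi(y))K_{sp}(x,y)+a(x,y)g_q(u(x)-u(y))(\phi(x)-\phi(y))K_{tq}(x,y)\Big]dx\,dy\geq 0.
\]

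First I would truncate to $\{|x-y|>\varepsilon\}$ and work with the restricted bilinear form $\mathcal{E}_\varepsilon(u,\phi)$ together with the truncated operator $\mathcal{L}^\varepsilon u(x):=\int_{|y-x|>\varepsilon}\!\!\left[g_p(u(x)-u(y))K_{sp}(x,y)+a(x,y)g_q(u(x)-u(y))K_{tq}(x,y)\right]dy$. Away from the diagonal the kernels are bounded by $\Lambda_1\varepsilon^{-n-sp}$ and $\Lambda_2\varepsilon^{-n-tq}$, while at infinity the tail hypotheses $u\in L^{p-1}_{sp}(\mathbb{R}^n)\cap L^{q-1}_{a,tq}(\mathbb{R}^n)$ and the compact support of $\phi$ control everything, so the truncated integrand is absolutely integrable and Fubini applies. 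Exploiting the symmetry of $K_{sp}, K_{tq}, a$ together with the antisymmetry of $g_p,g_q$ under $x\leftrightarrow y$, I would then symmetrise to obtain the key identity
\[
\mathcal{E}_\varepsilon(u,\phi)=2\int_{B_r(x_0)}\phi(x)\,\mathcal{L}^\varepsilon u(x)\,dx.
\]

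Next I would send $\varepsilon\downarrow 0$. On the right-hand side, Lemmas \ref{lem4-3} and \ref{lem4-4} give $|\mathcal{L}^\varepsilon u(x)-\mathcal{L}u(x)|\leq C(\varepsilon)$ with $C(\varepsilon)\to 0$ uniformly in $x\in B_r(x_0)$, and Lemma \ref{lem4-5} makes $\mathcal{L}u$ continuous and hence bounded on $\mathrm{supp}\,\phi$; consequently $\int\phi\,\mathcal{L}^\varepsilon u\,dx\to\int\phi\,\mathcal{L}u\,dx$. On the left-hand side I must show that the complementary near-diagonal piece $\mathcal{E}(u,\phi)-\mathcal{E}_\varepsilon(u,\phi)$ vanishes. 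Since $u,\phi\in C^2$ on $\mathrm{supp}\,\phi$, the Lipschitz bounds $|u(x)-u(y)|\leq C|x-y|$ and $|\phi(x)-\phi(y)|\leq C|x-y|$ combined with the kernel estimates dominate the integrand by $C|x-y|^{p(1-s)-n}+CM|x-y|^{q(1-t)-n}$, yielding a contribution of order $\varepsilon^{p(1-s)}+\varepsilon^{q(1-t)}\to 0$. Passing to the limit then gives $\mathcal{E}(u,\phi)=2\int\phi(x)\mathcal{L}u(x)\,dx\geq 0$, since both factors are nonnegative on $B_r(x_0)$.

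The main obstacle is the singular range $1<p\leq\frac{2}{2-s}$ with $Du(x_0)=0$, where $\mathcal{L}u(x_0)$ lives only as a principal value and the convergence $\mathcal{L}^\varepsilon u\to\mathcal{L}u$ relies on genuine cancellation rather than absolute integrability. The $C^2_\beta$-assumption with $\beta>\frac{sp}{p-1}$ is precisely what Lemma \ref{lem4-4} exploits to secure the uniform estimate $|\mathcal{L}^\varepsilon u(x)-\mathcal{L}u(x)|\leq C(\varepsilon)\to 0$ that powers the limit on the right-hand side and legitimises the passage from pointwise to integral form. Everything else—the Lipschitz near-diagonal control, the continuity of $\mathcal{L}u$, and the symmetrisation identity—is then routine given the preceding lemmas.
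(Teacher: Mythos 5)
Your proposal is correct and follows essentially the same route as the paper: symmetrize the truncated pointwise inequality $\mathcal{L}^\varepsilon u \geq -\theta_\varepsilon$ against the nonnegative test function and pass to the limit $\varepsilon\to 0$, using the uniform small-ball estimates of Lemmas \ref{lem4-3} and \ref{lem4-4} to control the operator side. The only minor difference is on the bilinear-form side, where you estimate the near-diagonal remainder directly via Lipschitz bounds while the paper exhibits an integrable majorant for the full integrand and invokes dominated convergence; both are valid.
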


\begin{proof}
Clearly, $u\in W^{s,p}_{\rm loc}(B_r(x_0))$. Let $\psi\in C^\infty_0(B_r(x_0))$ be nonnegative. From $\mathcal{L}u\geq0$, we know that, for $x\in \mathrm{supp}\,\psi$ and $\varepsilon>0$,
\begin{align*}
\int_{\mathbb{R}^n\setminus B_\varepsilon(x)}&\Big[|u(x)-u(y)|^{p-2}(u(x)-u(y))K_{sp}(x,y)\\
&+a(x,y)|u(x)-u(y)|^{q-2}(u(x)-u(y))K_{tq}(x,y)\Big]\,dy\geq -\theta_\varepsilon(x),
\end{align*}
where $\theta_\varepsilon(x)$ tends to 0 uniformly as $\varepsilon\rightarrow0$ in view of the continuity of $\mathcal{L}u$ (Lemma \ref{lem4-5}). According to the previous inequality, it follows that
\begin{align*}
\int_{\mathbb{R}^n}\int_{\mathbb{R}^n}&(1-\chi_{B_\varepsilon(x)}(y))\Big[|u(x)-u(y)|^{p-2}(u(x)-u(y))K_{sp}(x,y)\\
&+a(x,y)|u(x)-u(y)|^{q-2}(u(x)-u(y))K_{tq}(x,y)\Big]\psi(x)\,dy\,dx\geq -\int_{\mathbb{R}^n}\theta_\varepsilon(x)\psi(x)\,dx.
\end{align*}
Exchanging the roles of $x$ and $y$, via the symmetry of functions $a,K_{sp},K_{tq}$ we get
\begin{align*}
\int_{\mathbb{R}^n}\int_{\mathbb{R}^n}&(1-\chi_{B_\varepsilon(y)}(x))\Big[|u(y)-u(x)|^{p-2}(u(y)-u(x))K_{sp}(x,y)\\
&+a(x,y)|u(y)-u(x)|^{q-2}(u(y)-u(x))K_{tq}(x,y)\Big]\psi(y)\,dx\,dy\geq -\int_{\mathbb{R}^n}\theta_\varepsilon(x)\psi(x)\,dx.
\end{align*}
Adding up the above two inequalities, it yields that
\begin{align}
\label{4-9}
&\int_{\mathbb{R}^n}\int_{\mathbb{R}^n\setminus B_\varepsilon(y)}\Big[|u(x)-u(y)|^{p-2}(u(x)-u(y))K_{sp}(x,y) \nonumber\\
&\quad+a(x,y)|u(x)-u(y)|^{q-2}(u(x)-u(y))K_{tq}(x,y)\Big](\psi(x)-\psi(y))\,dx\,dy\geq -2\|\theta_\varepsilon\psi\|_{L^1(B_r(x_0))}.
\end{align}
It is easy to know that $\|\theta_\varepsilon\psi\|_{L^1(B_r(x_0))}\rightarrow0$ as $\varepsilon\rightarrow0$. We next check that the integrand of the integration in the left-hand side is bounded by an integrable function. Let $\mathrm{supp}\,\psi\subset B_\rho\subset\subset B_r(x_0)$. Applying Young's inequality and the assumption ($A_2$), we can estimate
\begin{align*}
&\quad\int_{\mathbb{R}^n}\int_{\mathbb{R}^n}\Big[|u(x)-u(y)|^{p-1}K_{sp}(x,y)+a(x,y)|u(x)-u(y)|^{q-1}K_{tq}(x,y)\Big]|\psi(x)-\psi(y)|\,dx\,dy\\ &\leq C\int_{B_\rho}\int_{B_\rho}\left(\frac{|u(x)-u(y)|^{p-1}|\psi(x)-\psi(y)|}{|x-y|^{n+sp}}+a(x,y)\frac{|u(x)-u(y)|^{q-1}|\psi(x)-\psi(y)|}{|x-y|^{n+tq}}\,dx\right)\,dy\\
&\quad+C\int_{\mathbb{R}^n\setminus B_\rho}\int_{\mathrm{supp}\,\psi}\left(\frac{|u(x)-u(y)|^{p-1}\psi(x)}{|x-y|^{n+sp}}+a(x,y)\frac{|u(x)-u(y)|^{q-1}\psi(x)}{|x-y|^{n+tq}}\right)\,dx\,dy\\
&\leq C\int_{B_\rho}\int_{B_\rho}\left(\frac{|u(x)-u(y)|^p}{|x-y|^{n+sp}}+a(x,y)\frac{|u(x)-u(y)|^q}{|x-y|^{n+tq}}\right)\,dx\,dy\\
&\quad+C\int_{B_\rho}\int_{B_\rho}\left(\frac{|\psi(x)-\psi(y)|^p}{|x-y|^{n+sp}}+a(x,y)\frac{|\psi(x)-\psi(y)|^q}{|x-y|^{n+tq}}\right)\,dx\,dy\\
&\quad+C\int_{\mathbb{R}^n\setminus B_d(z)}\int_{\mathrm{supp}\,\psi}\left(\frac{|u(x)|^{p-1}\psi(x)}{|z-y|^{n+sp}}+a(x,y)\frac{|u(x)|^{q-1}\psi(x)}{|z-y|^{n+tq}}\right)\,dx\,dy\\
&\quad+C\int_{\mathbb{R}^n\setminus B_d(z)}\int_{\mathrm{supp}\,\psi}\left(\frac{|u(y)|^{p-1}\psi(x)}{|z-y|^{n+sp}}+a(x,y)\frac{|u(y)|^{q-1}\psi(x)}{|z-y|^{n+tq}}\right)\,dx\,dy\\
&=:I_1+I_2+I_3+I_4,
\end{align*}
where $z\in\mathrm{supp}\,\psi$ and $d:=\mathrm{dist}(z,\partial B_\rho)$. We now verify that these integrals $I_1,I_2,I_3,I_4$ are finite. First, by virtue of the regularity for $u,\psi$, it is easy to know that $I_1,I_2$ are finite quantities. Second, we deal with $I_3$ as follows,
\begin{equation*}
I_3\leq Cd^{-sp}\int_{\mathrm{supp}\,\psi}|u|^{p-1}\psi\,dx+CMd^{-tq}\int_{\mathrm{supp}\,\psi}|u|^{q-1}\psi\,dx<\infty.
\end{equation*}
Finally, we estimate
\begin{align*}
&\quad\int_{\mathbb{R}^n\setminus B_d(z)}\int_{\mathrm{supp}\,\psi}a(x,y)\frac{|u(y)|^{q-1}\psi(x)}{|z-y|^{n+tq}}\,dx\,dy\\
&\leq \|\psi\|_{L^\infty(B_r(x_0))}\int_{\mathbb{R}^n\setminus B_d(z)}\int_{\mathrm{supp}\,\psi}a(x,y)\,dx\frac{|u(y)|^{q-1}}{|z-y|^{n+tq}}\,dy\\
&=\|\psi\|_{L^\infty(B_r(x_0))}\int_{\mathbb{R}^n\setminus B_d(z)}a(\xi,y)\frac{|u(y)|^{q-1}}{|z-y|^{n+tq}}\,dy\\
&\leq \|\psi\|_{L^\infty(B_r(x_0))}d^{-tq}[\mathrm{Tail}_a(u;z,d)]^{q-1},
\end{align*}
where we have used the mean value theorem due to the continuity of $a(\cdot,y)$ and $\xi\in \mathrm{supp}\,\psi$. As a consequence, it follows that
\begin{equation*}
I_4\leq C\|\psi\|_{L^\infty(B_r(x_0))}d^{-sp}[\mathrm{Tail}(u;z,d)]^{p-1}+C\|\psi\|_{L^\infty(B_r(x_0))}d^{-tq}[\mathrm{Tail}_a(u;z,d)]^{q-1}<\infty.
\end{equation*}
Therefore, we can apply the dominated convergence to arrive at
\begin{align*}
\int_{\mathbb{R}^n}\int_{\mathbb{R}^n}\Big[&|u(x)-u(y)|^{p-2}(u(x)-u(y))K_{sp}(x,y)\\
&+a(x,y)|u(x)-u(y)|^{q-2}(u(x)-u(y))K_{tq}(x,y)\Big](\psi(x)-\psi(y))\,dx\,dy\geq 0,
\end{align*}
by sending $\varepsilon\rightarrow0$ in inequality \eqref{4-9}, which leads to $u$ being a weak supersolution.
\end{proof}

Finally, we conclude this section by proving that any bounded weak supersolution to \eqref{main} is viscosity supersolution. This proof is completed by a contradiction argument, where we shall make use of Lemma \ref{lem4-7}, the continuity of operator $\mathcal{L}$ (Lemmas \ref{lem4-5} and \ref{lem4-6}) together with the comparison principle for weak solutions (Proposition \ref{pro4-1}).

\begin{theorem}
\label{thm4-8}
Let $u$ be a bounded and lower semicontinuous weak supersolution to Eq. \eqref{main}. Then $u$ is a viscosity supersolution to \eqref{main}.
\end{theorem}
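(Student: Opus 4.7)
The plan is to argue by contradiction. Suppose $u$ is a bounded lsc weak supersolution that fails to be a viscosity supersolution. Properties (i), (ii), (iv) of Definition \ref{def1} are immediate from lower semicontinuity and boundedness (note $u_-\in L^\infty$ trivially implies $u_-\in L^{p-1}_{sp}(\mathbb{R}^n)\cap L^{q-1}_{a,tq}(\mathbb{R}^n)$), so the only way (iii) can fail is that there exist $x_0\in\Omega$, $r>0$, and $\psi$ as in Definition \ref{def1}(iii) with $\psi(x_0)=u(x_0)$, $\psi\le u$ on $B_r(x_0)$, and $\mathcal{L}\psi_r(x_0)<0$, where $\psi_r$ equals $\psi$ on $B_r(x_0)$ and $u$ outside. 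Since $u\in L^\infty(\mathbb{R}^n)$, $\psi_r\in C^2(B_r(x_0))\cap L^{p-1}_{sp}(\mathbb{R}^n)\cap L^{q-1}_{a,tq}(\mathbb{R}^n)$, so Lemma \ref{lem4-5} applies and yields $\gamma>0$ and $\rho_1\in(0,r)$ with $\mathcal{L}\psi_r\le -\gamma$ throughout $B_{\rho_1}(x_0)$.

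Next I would invoke Lemma \ref{lem4-6} applied to $\psi_r$ with $\varepsilon=\gamma/2$ and $\varrho=\rho_1$: this produces $\rho\in(0,\rho_1)$, a bump $\eta\in C^2_0(B_{\rho/2}(x_0))$ with $0\le \eta\le 1$ and $\eta(x_0)=1$, and $\theta>0$ small such that $(\psi_\theta)_r:=\psi_r+\theta\eta$ satisfies
$$\mathcal{L}(\psi_\theta)_r \le \mathcal{L}\psi_r+\tfrac{\gamma}{2} \le -\tfrac{\gamma}{2}<0 \quad \text{throughout } B_\rho(x_0).$$
Observe $(\psi_\theta)_r=\psi+\theta\eta$ on $B_r(x_0)$ and $(\psi_\theta)_r=u$ outside. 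In the regime $1<p\le\frac{2}{2-s}$ with $D\psi(x_0)=0$, one must carry along the verification (as done in the proof of Lemma \ref{lem4-6}) that $x_0$ remains an isolated critical point and $(\psi_\theta)_r\in C^2_\beta$ locally, so that the hypotheses of Lemma \ref{lem4-7} are met. Applying Lemma \ref{lem4-7} to $-(\psi_\theta)_r$ (which has $\mathcal{L}(-(\psi_\theta)_r)=-\mathcal{L}(\psi_\theta)_r\ge\gamma/2>0$) identifies $(\psi_\theta)_r$ as a weak subsolution in $B_{\rho/2}(x_0)$.

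Now I would apply the comparison principle (Proposition \ref{pro4-1}) with $\Omega'=B_{\rho/2}(x_0)$. The verification of $u\ge (\psi_\theta)_r$ outside $\Omega'$ splits naturally: on $\mathbb{R}^n\setminus B_r(x_0)$ one has $(\psi_\theta)_r=u$ by construction, and on $B_r(x_0)\setminus B_{\rho/2}(x_0)$ the cutoff $\eta$ vanishes so $(\psi_\theta)_r=\psi\le u$. Proposition \ref{pro4-1} then yields $u\ge (\psi_\theta)_r$ a.e.\ on $B_{\rho/2}(x_0)$. But at $x_0$ we have $(\psi_\theta)_r(x_0)=\psi(x_0)+\theta=u(x_0)+\theta>u(x_0)$; combining with continuity of $(\psi_\theta)_r$ and the essential-liminf characterization of the lower semicontinuous representative of $u$, the a.e.\ inequality promotes to $u(x_0)\ge (\psi_\theta)_r(x_0)$, a contradiction.

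The main obstacle I anticipate is the final step, namely upgrading the a.e.\ comparison to a pointwise statement at $x_0$; this is precisely the reason lower semicontinuity appears in the hypothesis, and it requires identifying $u$ with its essential-liminf regularization so that $u(x_0)\ge \lim_{r\to 0}\operatorname*{essinf}_{B_r(x_0)}u\ge (\psi_\theta)_r(x_0)$. A secondary but nontrivial technicality is ensuring the regularity of $(\psi_\theta)_r$ (in particular the $C^2_\beta$ structure) is preserved through the perturbation in the degenerate case $1<p\le\frac{2}{2-s}$; the relevant bookkeeping already appears in Lemma \ref{lem4-6}, so this should be a matter of recycling those estimates.
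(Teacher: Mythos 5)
Your proposal is correct and follows essentially the same route as the paper: continuity of $\mathcal{L}\psi_r$ via Lemma \ref{lem4-5}, the perturbation $\psi_r+\theta\eta$ via Lemma \ref{lem4-6}, identification of the perturbed function as a weak subsolution via Lemma \ref{lem4-7}, and the comparison principle (Proposition \ref{pro4-1}) to reach the contradiction $\psi(x_0)+\theta\leq u(x_0)$. Your closing remark about upgrading the a.e.\ inequality to a pointwise one at $x_0$ via lower semicontinuity is a legitimate refinement of a step the paper passes over silently, but it does not constitute a different argument.
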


\begin{proof}
Assume that $u$ is a bounded weak supersolution with lower semicontinuity. According to the definition of viscosity solutions, the only property left to show is property (iii) in Definition \ref{def1}. In order to demonstrate that $u$ is a viscosity supersolution, we choose $\psi\in C^2(B_r(x_0))$ such that $\psi(x_0)=u(x_0)$, $\psi\leq u$ in $B_r(x_0)$ and that either (a) or (b) with $\beta>\frac{sp}{p-1}$ in Definition \ref{def1} (iii) holds. Then we have to show
\begin{equation}
\label{4-10}
\mathcal{L}\psi_r(x_0)\geq 0,
\end{equation}
where
$$
\psi_r=\begin{cases} \psi  &\text{\textmd{in } } B_r(x_0), \\[2mm]
u   &\text{\textmd{in } } \mathbb{R}^n\setminus B_r(x_0).
\end{cases}
$$
We argue by contradiction. If \eqref{4-10} is not true, then, by the continuity of $\mathcal{L}\psi_r$ (see Lemma \ref{lem4-5}), for some $\tau>0$ and $\varrho\in (0,r)$ we obtain
$$
\mathcal{L}\psi_r\leq -\tau  \quad\text{in } B_\varrho(x_0).
$$
In addition, it follows from Lemma \ref{lem4-6} that there are $\theta>0$, $\rho\in(0,\varrho)$ and $\eta\in C^2_0(B_{\rho/2}(x_0))$ with $0\leq\eta\leq1$, $\eta(x_0)=1$, such that $\varphi:=\psi_r+\theta\eta$ fulfills
$$
\sup_{B_{\rho}(x_0)}|\mathcal{L}\psi_r-\mathcal{L}\varphi|<\tau.
$$
Therefore, we get
$$
\mathcal{L}\varphi\leq0 \quad\text{in } B_\rho(x_0).
$$
From Lemma \ref{lem4-7}, we know that $\varphi$ is a continuous weak subsolution in $B_\rho(x_0)$. Obviously, $\varphi=\psi_r\leq u$ in $\mathbb{R}^n\setminus B_{\rho/2}(x_0)$. Hence through Comparison principle \ref{pro4-1}, it yields that
$$
\varphi\leq u \quad\text{in } B_{\rho/2}(x_0).
$$
In particular, $\varphi(x_0)=\psi(x_0)+\theta\leq u(x_0)$ ($\theta>0$), which contradicts $\psi(x_0)=u(x_0)$. We now have showed that $u$ is a viscosity supersolution.
\end{proof}

\section*{Acknowledgments}

This work was supported by the National Natural Science Foundation of China (No. 12071098).

\end{document}